\documentclass[11pt]{article}
\usepackage{authblk}

\usepackage{fullpage}
\usepackage{layout}
\usepackage{multirow}


\usepackage{amsfonts}
\usepackage{amsmath}
\usepackage{amsthm}
\usepackage{amssymb} 

\usepackage{nicefrac}
\usepackage{mathtools}
\newcommand{\thup}{^{\text{th}}}

\usepackage{hyperref}
\usepackage{cleveref}

%

\usepackage{mdframed}
\usepackage{thmtools}

\definecolor{shadecolor}{gray}{0.90}
\declaretheoremstyle[
headfont=\normalfont\bfseries,
notefont=\mdseries, notebraces={(}{)},
bodyfont=\normalfont,
postheadspace=0.5em,
spaceabove=1pt,
mdframed={
  skipabove=8pt,
  skipbelow=8pt,
  hidealllines=true,
  backgroundcolor={shadecolor},
  innerleftmargin=4pt,
  innerrightmargin=4pt}
]{shaded}

\declaretheorem[style=shaded,within=section]{definition}
\declaretheorem[style=shaded,sibling=definition]{theorem}
\declaretheorem[style=shaded,sibling=definition]{proposition}

\declaretheorem[style=shaded,sibling=definition]{corollary}

\declaretheorem[style=shaded,sibling=definition]{lemma}

\usepackage{algorithm}
\usepackage[noend]{algpseudocode}

\algrenewcommand\algorithmicwhile{\textbf{While}}

\usepackage{caption}
\usepackage{subcaption}

\usepackage{xcolor}
\usepackage{color}
\usepackage{graphicx}
\graphicspath{{./figures/}}  

\usepackage[normalem]{ulem}

\newcommand{\R}{\mathbb{R}} 
\newcommand{\N}{\mathbb{N}} 


\newcommand{\cD}{{\cal D}}

\newcommand{\cO}{{\cal O}}



\newcommand{\mA}{{\bf A}}
\newcommand{\mB}{{\bf B}}

\newcommand{\mD}{{\bf D}}

\newcommand{\mF}{{\bf F}}

\newcommand{\mH}{{\bf H}}
\newcommand{\mI}{{\bf I}}

\newcommand{\mK}{{\bf K}}

\newcommand{\mM}{{\bf M}}

\newcommand{\mS}{{\bf S}}

\newcommand{\mW}{{\bf W}}
\newcommand{\mX}{{\bf X}}

\newcommand{\mZ}{{\bf Z}}

\usepackage[colorinlistoftodos,bordercolor=orange,backgroundcolor=orange!20,linecolor=orange,textsize=scriptsize]{todonotes}

\newcommand{\eqdef}{\coloneqq}

\newcommand{\ve}[2]{\langle #1 ,  #2 \rangle} 
\newcommand{\dotprod}[1]{\left< #1\right>} 
\newcommand{\norm}[1]{ \| #1 \|}      

\newcommand{\Prob}[1]{\mathbb{P}[#1]}
\DeclareMathOperator{\Range}{Range}     

\DeclareMathOperator{\argmin}{argmin}        




\DeclarePairedDelimiter\floor{\lfloor}{\rfloor}
\DeclarePairedDelimiter\ceil{\lceil}{\rceil}

\providecommand{\range}[1]{{\rm Range}\left( #1\right)}

\providecommand{\trace}[1]{{\rm Trace}\left( #1\right)}

\newcommand{\E}[1]{\mathbb{E}\left[#1\right] }



\newcommand{\eg}{{\em e.g.,~}}

\usepackage[newfloat,frozencache=true,cachedir=minted-cache]{minted}

\SetupFloatingEnvironment{listing}{name=Code}
\usepackage[section]{placeins}
\usepackage{listings}

\usepackage{tcolorbox}
\BeforeBeginEnvironment{minted}{\begin{tcolorbox}}%
\AfterEndEnvironment{minted}{\end{tcolorbox}}%

\usepackage[giveninits=true,backend=bibtex,style=alphabetic,citestyle=alphabetic, url =false, arxiv =false, isbn = false, doi = false]{biblatex}
\usepackage{wrapfig}
\bibliography{biblio}


\begin{document}

\title{\texttt{RidgeSketch}: A Fast sketching based solver for large scale ridge regression}

\author[1]{Nidham Gazagnadou}
\author[2]{Mark Ibrahim}
\author[1]{Robert M. Gower}
\affil[1]{LTCI, T\'el\'ecom Paris, Institut Polytechnique de Paris \protect\\ email: \textit{\{nidham.gazagnadou,robert.gower\}@telecom-paris.fr}}
\affil[2]{Facebook AI Research \protect\\ \text{email: \textit{marksibrahim@fb.com}}}

\maketitle

\begin{abstract}
	We propose new variants of the sketch-and-project method for solving large scale ridge regression problems.
	Firstly, we propose a new momentum alternative and provide a theorem showing it can speed up the convergence of sketch-and-project, through a fast \emph{sublinear} convergence rate.
	We carefully delimit under what settings this new sublinear rate is faster than the previously known linear rate of convergence of sketch-and-project without momentum.
	Secondly, we consider combining the sketch-and-project method with new modern sketching methods such as the count sketch, subcount sketch (a new method we propose), and subsampled Hadamard transforms.
	We show experimentally that when combined with the sketch-and-project method, the (sub)count sketch is very effective  on sparse data and the standard subsample sketch is effective on dense data.
	Indeed, we show that these sketching methods, combined with our new momentum scheme, result in methods that are competitive even when compared to the Conjugate Gradient method on real large scale data.
	On the contrary, we show the subsampled Hadamard transform does not perform well in this setting, despite the use of fast Hadamard transforms, and nor do recently proposed acceleration schemes work well in practice.
	To support all of our experimental findings, and invite the community to validate and extend our results, with this paper we are also releasing an open source software package: \texttt{RidgeSketch}.
	We designed this object-oriented package in Python for testing sketch-and-project methods and benchmarking ridge regression solvers.
	\texttt{RidgeSketch} is highly modular, and new sketching methods can easily be added as subclasses. We provide code snippets of our package in the appendix.
\end{abstract}


\section{Introduction}

Consider the regression problem given by
\begin{equation}
	\label{eq:ridge}
	\min_{w \in \R^d} \frac{1}{2}\norm{\mX w - y}_2^2 + \frac{\lambda}{2}\norm{w}_2^2 \enspace,
\end{equation}
where $\mX \in \R^{n \times d}$  is the feature matrix, $y \in \R^n$ the targets and $\lambda >0$ the regularization parameter.

The need to solve linear regression such as~\eqref{eq:ridge} occurs throughout scientific computing, where it is known as ridge regression in the statistics community~\cite{sgv-rrladv-98,vovk2013kernel}, data assimilation in weather forecasting~\cite{Krishnamurtiweather} and linear least squares with Tikhonov regularization among numerical analysts~\cite{GolubTikhonov}.

Here we focus on applications where the number of rows and columns of $\mX \in \R^{n \times d}$ are both large scale.
With datasets now being collected automatically and electronically, there has been a surge of new stochastic incremental methods that can gracefully scale with the dimensions of the data in~\eqref{eq:ridge}. Yet, it is still unclear if the new stochastic methods are capable of outperforming the classic Conjugate Gradient (CG) methods~\cite{Hestenes1952}, even though the CG method was developed in the 1950's. We address this question, by designing and implementing new stochastic methods based on randomized sketching~\cite{Pilanci2015,Pilanci2014} together with the iterative projection methods known as the sketch-and-project methods~\cite{Gower2015}.
Our main objective can be stated simply as

\begin{quote}
	{\bf Main objective:}  Develop variants of the sketch-and-project method that are competitive in practice, and provide new theoretical support for these variants.
\end{quote}

We do this by exploring the use of new sketching transforms such as  \texttt{Count sketch}~\cite{CountSketch2002} and proposing a new momentum variant of the sketch-and-project method.
To demonstrate the benefits of our new momentum and sketching variant, we show how our resulting method is competitive with the Conjugate Gradients (CG) methods, and supersedes previous momentum based variants~\cite{Loizou2020mom}.
We will also give some several negative results, such as showing how, in our setting, recently developed accelerated variants~\cite{TuVWGJR17,acellqN2018} are not viable in practice, and nor are the fast Johnson-Lindenstrauss (JL) transforms~\cite{Ailon:2009}.

In the following~\Cref{sec:background_contrib} we outline some of the background and state our main contributions.
In~\Cref{sec:sketch_and_project}, we describe the sketch-and-project method for solving ridge regression and the different sketching methods we explored in~\Cref{sec:sketch_matrices}.
Then, we give a convergence theory for our method without and with momentum respectively in~\Cref{sec:conv_theory,sec:mom}, that we specialize in~\Cref{sec:specialconv} for single column sketches.
To illustrate this theoretical results, we provide large scale numerical experiments highlighting our theoretical findings in~\Cref{sec:mom_exp} and showing that our momentum sketch-and-project method competes against CG and direct solvers.
Finally, in~\Cref{sec:accel} we present the accelerated version of our method and show its inefficiency in practice since it requires additional overhead costs and spectral information of $\mA$ to compute acceleration parameters.

\section{Background and contributions}
\label{sec:background_contrib}

\paragraph{Iterative sketching in linear systems.}
When the dimensions $n$ and $d$ are large, direct methods for solving~\eqref{eq:ridge} can be infeasible, and iterative methods are favored.
In particular, the Krylov methods including the CG algorithm~\cite{Hestenes1952} are the industrial standard so long as one can afford full matrix vector products and the system matrix fits in memory.
On the other hand, if a single matrix vector product is considerably expensive, or the data matrix is too large to fit in memory, then iterative methods exploiting only few rows or columns of $\mA$ are effective.
This includes, for example, randomized Kaczmarz methods~\cite{Kaczmarz1937,Strohmer2009,necoara2019faster}, its greedy or deterministic variants~\cite{petra2016single,du2019new,de2017sampling,haddock2021greed,bai2018greedy}, and Coordinate Descent (CD) method~\cite{Leventhal2010,Ma2015,wright2015coordinate}.
In~\cite{Gower2015}, the authors united Kaczmarz, CD and host of other randomized  iterative methods under the sketch-and-project framework.\\[0.2cm]

\noindent{\emph{Contributions}.} We revisit the sketch-and-project method and examine how to make specialized variants for solving ridge regression that are competitive as compared to conjugate gradients.
To do so, we are also releasing a high-quality and modular Python package called \texttt{RidgeSketch}\footnote{Our fully documented and tested package is available at \url{https://github.com/facebookresearch/RidgeSketch}.} for efficiently implementing and testing sketch-and-project methods.
More information about the code and how to contribute by adding sketches or datasets are detailed in~\Cref{sec:code}.

\paragraph{Momentum.}
Recently, a variant of sketch-and-project with momentum was proposed in~\cite{Loizou2020mom}.
The authors of~\cite{Loizou2020mom} show a theoretical advantage in terms of convergence in expectation (of the first moment only),
but do not present any advantage in terms of convergence in L2 or high probability. There has even more recent work~\cite{Morshed:2020-SSD} analysing momentum together with sketch-and-project, and extensions to solving the linear feasibility problem~\cite{MorshedIN20,Morshed-2020-feasibility}. Yet none of these works show any theoretical benefit of using momentum over using no momentum at all.
This lack of a theoretical benefit of using momentum is echoed throughout stochastic optimization: there are no almost no theoretical benefits in using momentum outside of the strongly convex full batch setting~\cite{Polyak64}. \\[0.2cm]

\noindent{\emph{Contributions}.}  Here we have establish the first theoretical advantage of using momentum together with sketch-and-project method. We show that when using sketch-and-project with momentum, the last iterate enjoys a fast sublinear convergence. Without momentum, this result does not hold. Instead, without momentum, it has only been shown to hold for the average of the iterates. As depicted in~\Cref{fig:superiority_sublinear_over_linear},
we show that the fast sublinear rate of our new momentum variant gives a tighter complexity bound than the previously best known linear rate of convergence~\cite{SDA} when the desired precision is moderate and the condition number of the underlining problem is moderate to large.
Our new convergence theory for momentum also suggests completely iteration dependent  schedules for setting the momentum parameter. We perform extensive numerical tests showing the superiority of this new scheduling as compared to using a constant momentum.

\begin{figure}
	\centering
	\includegraphics[width=.5\textwidth]{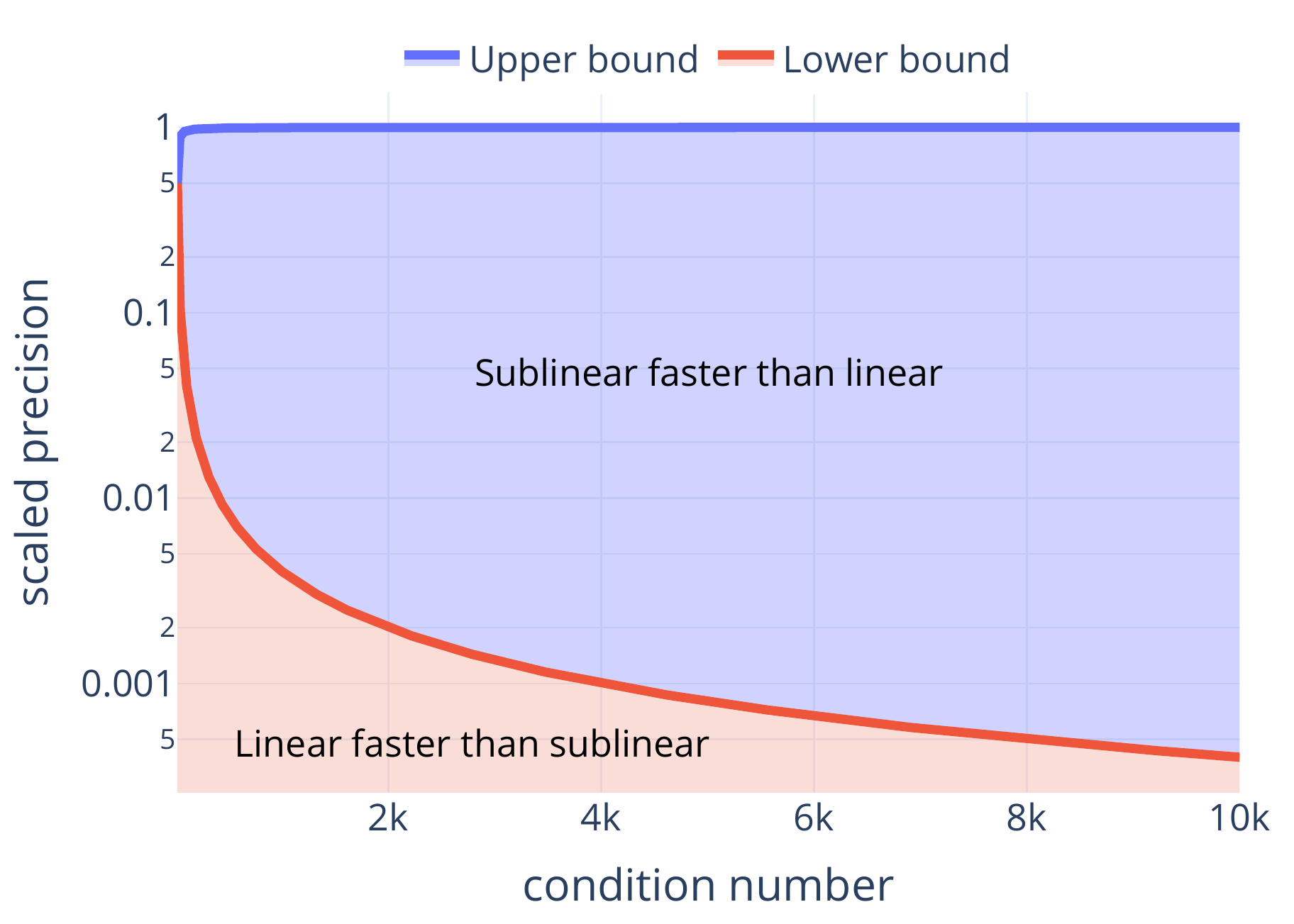}
	\caption{Areas of superiority of sublinear convergence of CD with momentum (in blue) and linear convergence of CD without momentum (in red), where the upper and lower bounds are respectively the left and right-hand side of~\eqref{eq:bounds_scaled_eps}.}
	\label{fig:superiority_sublinear_over_linear}
\end{figure}

\paragraph{Acceleration.}
Lee and Sidford~\cite{Lee2013} show how the Kaczmarz method could be accelerated through its connection to CD, and also compare the resulting accelerated convergence rate to the rate of the Conjugate Gradient algorithm.
Though, they provide no numerical experiments leaving it unclear if this form of acceleration can afford any practical advantage.
Later, Liu and Wright~\cite{LiuWright-AccKacz-2016} developed an accelerated Kaczmarz method and show that it can be faster than CG on densely generated artificial data.
But they do not provide examples of this on real data or an affordable rule for setting the acceleration parameters.
More recently, it was shown that the entire family of the sketch-and-project methods could be accelerated~\cite{acellqN2018}.
Yet, experiments of the authors rely on a grid search that would defeat any gains in using acceleration.  \\[0.2cm]

\noindent{\emph{Contributions}.}  We investigate the possibility of developing a practical setting for the two acceleration parameters proposed in~\cite{acellqN2018} that would result in a robust performance gain over standard sketch-and-project.
Unlike Nesterov's acceleration for gradient descent in the convex setting, there are no default parameter settings that work consistently across a significant class of problems.
We show through a careful grid search that finding good parameters is like ``looking for a needle in the haystack'' and that identifying any practical settings for these parameters is virtually impossible.
We thus recommend that, to advance the use of acceleration for linear systems, one would need to study a smaller class of problems and certain spectral bounds to derive a working rule for setting the parameters.

\paragraph{Johnson-Lindenstrauss Sketches}
In~\cite{JL}, the authors show how high dimension data can be projected onto a low dimension subspace using a Gaussian matrix in such a way that it approximately preserves the pairwise distance between points.
This result is now known as the celebrated Johnson-Lindenstrauss (JL) Lemma. Since then, many more random transforms have been shown to satisfy this property, such as the Count sketch~\cite{Cormode2003}, subsampled Fourier~\cite{Ailon:2009} and Hadamard transforms~\cite{boutsidis2013improved,tropp2011improved,JayramW13}.
These JL \emph{transforms} have been used to speed up LAPACK solvers~\cite{Maymounkov2013}, solving linear regression~\cite{WangGM17} and Newton based methods~\cite{Pilanci2014}.\\[0.2cm]

\noindent{\emph{Contributions}.} We propose new combinations of the sketch-and-project method with JL sketches such as the Subsampled Randomized Hadamard Transform (SRHT) and a new subsampled count (SubCount) sketch. Our new package \texttt{RidgeSketch} is also setup in a way that is easily extensible, where new sketching methods can easily be added as a new instance of a \emph{sketching class} (see~\Cref{sec:code}).
Our results for the subsampled Hadamard sketch are negative. We show, despite the favourable theoretical complexity of using Hadamard sketches,  that the overhead costs make them a completely impractical choice for sketch-and-project methods. The SubCount sketch, on the other hand, when combined with sketch-and-project, results in an efficient method.

\paragraph{Alternative Iterative Sketching based methods.}
A closely related method to the sketch-and-project method is the Iterative Hessian Sketch~\cite{Pilanci2014}, which makes use of iterative sketching to solve constrained quadratics. In the unconstrained setting such as~\eqref{eq:ridge},   Iterative Hessian Sketch is efficient when the  dimension $d$ to be significantly smaller than number of data points $n$.
This  rules out one of our main applications: kernel ridge regression where $n=d$.\\[0.2cm]

\subsection{Linear system formulation}
\label{sec:ridgelinear}

Since the optimization problem~\eqref{eq:ridge} is differentiable and without constraints, its solution satisfies the stationarity conditions given by
\begin{equation}
	\label{eq:linear}
	\left(\mX^\top \mX +\lambda \mI \right) w = \mX^\top y \enspace.
\end{equation}
We can also rewrite the above linear system in its \emph{dual form} given by
\begin{equation}
	\label{eq:duallinear}
	w = \mX^\top \alpha \enspace, \quad \mbox{where} \quad \left(\mX \mX^\top +\lambda \mI \right) \alpha = y \enspace.
\end{equation}
The equivalence between solving~\eqref{eq:linear} and~\eqref{eq:duallinear} is well known and proven in the appendix in Lemma~\ref{lem:dual_equiv} for completion.
The choice of solving~\eqref{eq:linear} or~\eqref{eq:duallinear} will depend on the dimensions of the data.
On the one hand, the system in~\eqref{eq:linear} involves a $d \times d$ matrix, and thus the \emph{primal form}~\eqref{eq:linear} is preferred when $d \leq n$.
On the other hand, the system in~\eqref{eq:duallinear} involves a $n \times n$ matrix, and thus solving the \emph{dual form}~\eqref{eq:duallinear} is preferred when $n < d$.

In either case, the bottleneck cost is the solution of a linear system where the system matrix is symmetric and positive definite. To simplify notation we introduce $\mA \eqdef \mB + \lambda \mI$, and let
\begin{equation}
	\label{eq:Axb}
	\mA w =b \enspace,
\end{equation}
where $\mB = \mX^\top \mX$ and $b = \mX^\top y$, for the primal form~\eqref{eq:linear}, or $\mB = \mX \mX^\top$ and $b=y$ for the dual one~\eqref{eq:duallinear}. Note that in the later case, a multiplication by $\mX^\top$ is required to recover the weights vector.
Let $m$ be the dimensions of $\mA \in \R^{m\times m}$, thus $m$ equals the smallest dimension of the design matrix $\mX \in \R^{n\times d}$. Indeed, $m = d$ if we choose to solve the primal version~\eqref{eq:linear} and $m=n$ if we choose to solve the dual one~\eqref{eq:duallinear}.

Thus solving the ridge regression problem boils down to finding the solution of the linear system~\eqref{eq:Axb}. If the dimension of the squared matrix $\mA$, denoted $m$, is not too large, one can solve this problem using a direct solver (for instance through a SVD or a Cholesky decomposition).
But when $m$ is large, direct methods become intractable as their computational cost grows with $\cO(m^3)$.

\subsection{Using a kernel}

We also consider kernel ridge regression, where the feature matrix is the result of applying a feature map. This leads to particular considerations since the resulting feature matrix may have an infinite number of columns.

The idea behind kernel ridge regression is that, instead of learning using the original input (or feature) vectors $\mX \eqdef [x_1,\ldots, x_n]$, we can learn using a high dimensional feature map of the inputs
$\phi : \R^d \rightarrow \R^r$
where $r >d$ or even an infinite dimensional space. For instance, $\phi(x)$ could encode a high dimensional polynomial.  By replacing each $x_i$ with $\phi(x_i)$ in~\eqref{eq:ridge} we arrive at
\begin{equation}
	\label{eq:ridge_feature}
	\min_{w \in \R^r} \frac{1}{2} \sum_{i=1}^n(\phi(x_i) w - y_i)^2 + \frac{\lambda}{2}\norm{w}_2^2 \enspace.
\end{equation}
When $r$ is large, or even infinite, solving~\eqref{eq:ridge_feature} directly can be difficult or intractable.
Fortunately, the \emph{dual formulation} of~\eqref{eq:ridge_feature} is always an $n$--dimensional problem independently of the dimension $r$~\footnote{This is commonly known as the  \emph{kernel trick}, see Chapter 16 in~\cite{Shalev-Shwartz:2014book}.}. The \emph{dual formulation} of~\eqref{eq:ridge_feature} is given by
\begin{equation}
	\label{eq:dual_ridge_feature}
	\alpha^* =\arg\min_{\alpha \in \R^n} \frac{1}{2} \norm{\mK \alpha - y}_2^2 + \frac{\lambda}{2}\alpha^\top \mK \alpha \enspace,
\end{equation}
where $\mK = \left(\mK(x_i, x_j)\right)_{ij} \eqdef \dotprod{\phi(x_i), \phi(x_j)}_{ij}$ is the kernel matrix. This is equivalent to solving in $\alpha$ the linear system
\begin{equation}\label{eq:kernelridge}
	\mK \left(\mK + \lambda \mI \right)\alpha = \mK y \enspace.
\end{equation}
With $\alpha^*$, the solution to the above, we can then predict the output of a new input vector $x$ using
\begin{equation*}
	\mbox{predict}(x) \; = \;  \sum_{i=1}^n \alpha_i^*\dotprod{\phi(x_i),\phi(x) } \; = \;  \sum_{i=1}^n \alpha_i^*\mK(x_i,x) \; = \; \mK \alpha^* \enspace.
\end{equation*}
Consequently to solve~\eqref{eq:dual_ridge_feature} and make predictions, we only need access to the kernel matrix. Fortunately, there are several feature maps for which the kernel matrix is easily computable including the one we use in our experiments which is the Gaussian Kernel, otherwise known as the Radial Basis Function
\begin{equation}
	\label{eq:Gausskernel}
	K(x, x') = \exp\left(\frac{-||x-x'||^2}{2\sigma^2}\right) \enspace.
\end{equation}
where $\sigma>0$ is the \emph{kernel} parameter.

Ultimately, despite the addition of a kernel, the resulting problem~\eqref{eq:kernelridge} is still a linear system  of the form $\mA w = b$ where
\begin{equation*}
	\mA =  \mK^\top(\mK + \lambda \mI) \quad \mbox{and}\quad b = \mK^\top y \enspace.
\end{equation*}
The only marked difference now is that $\mA$ tends to be dense, and because of this, matrix-vector products are particularly expensive.

\section{The Sketch-and-Project method}
\label{sec:sketch_and_project}

Sketch-and-project is an archetypal algorithm that unifies a variety of randomized iterative methods including both randomized Kaczmarz and CD~\cite{Gower2015}, and all their block and importance sampling variants.

At each iteration, the sketch-and-project methods randomly compressed the linear system using what is known as a \emph{sketching matrix}.
\begin{definition}
	Let $\tau \in \N$ and let $\cD$ be a distribution over matrices in $\R^{m \times \tau}$. We refer to $\tau$ as the sketch size and to $\mS \in \R^{m \times \tau}$ drawn from the distribution $\cD$ as a sketching matrix.
\end{definition}

We can use a sketching matrix $\mS$ to reduce the number of rows of the linear system~\eqref{eq:Axb} to $\tau$ rows as follows
\begin{equation}
	\label{eq:sketchedAxb}
 	\mS^\top \mA w = \mS^\top b \enspace.
\end{equation}
If the sketch size $\tau$ is sufficiently large and the sketching matrix is appropriately chosen, then we can guarantee with high probability that the solution to the \emph{sketched} linear system~\eqref{eq:sketchedAxb}
is close to the solution $w^*$ of the original system~\eqref{eq:Axb} (see~\cite{Mahoney2011}). But this \emph{one-shot} sketching approach poses several challenges 1) it may be hard to determine how large $\tau$ should be, 2) the sketched linear system now has multiple solutions and 3) with some low probability the solution to~\eqref{eq:sketchedAxb} could be far from $w^*.$ To address these issues, we use an iterative projection scheme.

\begin{algorithm}
	\begin{algorithmic}[1]
		\State \textbf{Parameters:} distribution over random $m \times \tau$ matrices in $\cD$, tolerance $\epsilon > 0$
		\State Set $w^0 = 0 \in \R^m$ \Comment weights initialization
		\State Set $r^0 = \mA w^0 - b = - b \in \R^m$ \Comment residual initialization
		\State $k = 0$
		\While {$\norm{r^k}_2 / \norm{r^0}_2 \leq \epsilon$}
			\State Sample an independent copy $\mS_k \sim \cD$
			\State $r_{\mS}^k = \mS_k^\top r^k$ \label{ln:sketchrhs} \Comment compute sketched residual
			\State $\delta_k =$ \texttt{least\_norm\_solution} $\left( \mS_k^\top \mA \mS_k, r_{\mS}^k \right)$  \label{ln:algo_sketched_system} \Comment solve sketched system
		  	\State $w^{k+1} =  w^k - \mS_k \delta_k$ \Comment update the iterates
			\State $r^{k+1} = r^k - \mA \mS_k \delta_k$ \Comment update the residual
			\State $k = k + 1$
		\EndWhile
		\State \textbf{Output:} $w^{t}$ \Comment return weights vector
	\end{algorithmic}
	\caption{The Sketch-and-Project method}
	\label{alg:SP}
\end{algorithm}

Let $\mW$ be a symmetric positive definite matrix of order $n$ (which will typically be chosen as $\mA$), here we project with respect to the $\mW$--norm\footnote{Using this norm for symmetric positive definite matrices has shown to result in algorithms with a fast convergence rate~\cite{Gower2015}.} given by $\norm{\cdot}_{\mW} = \sqrt{\ve{\cdot}{\mW \cdot}}$.

At the $k\thup$ iteration of the sketch-and-project algorithm,  a sketching matrix $\mS_{k}$ is drawn from $\cD$ and the current iterate $w^{k}$ is projected onto the solution space of the sketched system $\mS_{k}^\top \mA x = \mS_{k}^\top b$ with respect to the $\mW$--norm, that is
\begin{equation}
	\label{eq:NF}
	w^{k+1} = \argmin_{w\in \R^m} \norm{w- w^{k}}_\mW^2 \quad \mbox{subject to} \quad \mS_{k}^\top \mA w = \mS_{k}^\top b \enspace.
\end{equation}
The closed form solution to~\eqref{eq:NF} is given by
\begin{equation}
	\label{eq:xupdateW}
	w^{k+1} = w^k - \mW^{-1} \mA \mS_k \left(\mS_k^\top \mA \mW^{-1} \mA \mS_k \right)^{\dagger} \mS_k^\top(\mA w^k-b) \enspace,
\end{equation}
where ${}^\dagger$ denotes the pseudoinverse. When $\mA$ is known to be positive definite, as in our case, using $\mW = \mA$ often results in an overall faster convergence of~\eqref{eq:xupdateW} as shown in~\cite{Gower2015}. Using  $\mW = \mA$  in~\eqref{eq:xupdateW}  gives the updates
\begin{equation}
	\label{eq:xupdate}
	w^{k+1} = w^k - \mS_k \left( \mS_k^\top \mA \mS_k \right)^{\dagger} \mS_k^\top(\mA w^k-b) \enspace.
\end{equation}
We refer to~\eqref{eq:xupdate} as the \texttt{RidgeSketch} method since it is specialized for solving ridge regression.
Here we give the details on how to efficiently implement the \texttt{RidgeSketch} update~\eqref{eq:xupdate}, see~\Cref{alg:SP}.

One practical detail we have added to the pseudocode in~\Cref{alg:SP} is a stopping criteria.
For any iterative algorithm, it is important to know when to stop.
We can do this by monitoring the \emph{residual} $r^k \eqdef \mA w^k - b$.
From an initial residual $r^0 \in \R^m$, when the relative residual $\norm{r^k} / \norm{r^0}$ is below a given tolerance, we stop. We also need this residual for computing the update~\eqref{eq:xupdate}. We can efficiently update the residual from one iteration to the next since
\begin{equation}
	r^{k+1} = \mA w^{k+1} - b
	= \mA (w^k - \mS_k  \delta_k) - b
	= r^k - \mA \mS_k \delta_k \enspace,
\end{equation}
where $\delta_k \eqdef \left( \mS_k^\top \mA \mS_k \right)^{\dagger} \mS_k^\top(\mA w^k-b) = \left( \mS_k^\top \mA \mS_k \right)^{\dagger} \mS_k^\top r^k$.
Thus we can update the residual at the cost of $O(m \tau)$, that is, multiplying a $m \times \tau$ matrix  $\mA \mS_k$ with the $\tau$ dimensional vector $\delta_k$. Note that $\delta_k$ is can be efficiently computed as the least-norm solution of the following linear system in $x$
\begin{equation}
	\label{eq:sketched_system}
	\mS_k^\top \mA  \mS_k x = \mS_k^\top r^k \enspace.
\end{equation}

\section{Sketching methods and matrices}
\label{sec:sketch_matrices}

Here we introduce several sketching matrices that can be used in Algorithm~\ref{alg:SP}. The ideal sketch is one that reduces the dimension of the linear system~\eqref{eq:Axb} as much as possible, while preserving as much information as possible and that can be efficiently implemented.  As we discuss throughout this section, there is no sketch that has all three of these qualities, and ultimately, one must make a trade-off between them.

\subsection{Classical sketches}

One of the most classical and simple sketching method is the \emph{Gaussian sketch}.
\begin{definition}
	A {\bf Gaussian sketch} is a random matrix $\mS \in \R^{m \times \tau}$ where is each element is sampled i.i.d for the standard Gaussian distribution.
\end{definition}

As argued in~\cite{Pilanci2015}, the resulting sketched matrix $\mS^\top \mA$ can be a good approximation to the full matrix and is easy to control with probability bounds. Though simple to implement, the cost of forming $\mS^\top \mA$ is $\cO(\tau m^2)$, which is expensive.

A much cheaper option is to use a \emph{Subsampling sketch}.
\begin{definition}
	A {\bf Subsampling sketch} is based on a randomly sampled subset $C \subset \{1, \ldots, m\}$  with $|C| = \tau$ elements drawn uniformly on average from all such subsets. Let $ \mI_ {C} \in \R^{m \times \tau}$ denote the concatenation of the columns of the identity matrix $\mI \in \R^{m \times m}$ whose columns are indexed in $C$. We define the subsampling sketch distribution as
	\begin{equation}
		\label{eq:subsample}
		\Prob{\mS = \mI_{C}} \; = \; \frac{1}{\binom{m}{\tau}} \enspace, \quad \mbox{for all}\quad C \subset \{1, \ldots, m\}, \; |C| = \tau \enspace.
	\end{equation}
\end{definition}

Subsampling sketches are very cheap to compute, indeed, we need not even compute $\mS^\top \mA$ since is simply equivalent to fetching the rows of $\mA$ indexed by a random subset. This can be done in Python without generating any copies of the data by slicing the selected rows.
Slicing is very well optimized operation in NumPy~\cite{van2011numpy} and SciPy~\cite{virtanen2020scipy} Compressed Sparse Row (CSR) sparse arrays, which makes it one the fastest sketching method.

Though subsampling sketches are cheap and fast, the sketched matrix $\mS^\top \mA$ can be a poor approximation of $\mA$, since it is always possible that some vital part of $\mA$ is ``left out'' in the rows that were not sampled. Still, the subsampling sketch will prove to work well within the iterative sketch-and-project scheme.

Next we consider a sketch that makes use of subsampling, addition and subtraction of rows.

\subsection{Count and SubCount sketch}

In order to avoid losing too much information by just subsampling rows, one can also  sum and subtract groups of rows.
This is the idea behind \texttt{Count sketch} which stems from the streaming data literature~\cite{CountSketch2002,CountMinSketch2005} and got popularized as a matrix sketching tool by~\cite{clarkson2017low}. Count sketch selects rows of $\mA$, flips their sign with probability $1/2$ and add it to a random row, sampled uniformly, of the output matrix $\mS^\top \mA$.

To decrease the overall cost of Count Sketch, we also combined it with a subsampling step. We call the resulting method the \texttt{SubCount sketch}.
SubCount sketch has two parameters, the subsampling size $s \in \{1,\ldots, m\} $ and the sum size $k \in \{1,\ldots, s\}$  that must be such that it divides $s$.
The SubCount sketch can be broken down into three steps: subsampling $s$ random rows of the input, then randomly flipping their sign and finally summing $k$ contiguous  rows together. This can also be illustrated in terms of matrix multiplications as follows.

\begin{figure}
    \centering
    \begin{subfigure}{0.45\textwidth}
		\centering
		\includegraphics[width=.5\textwidth]{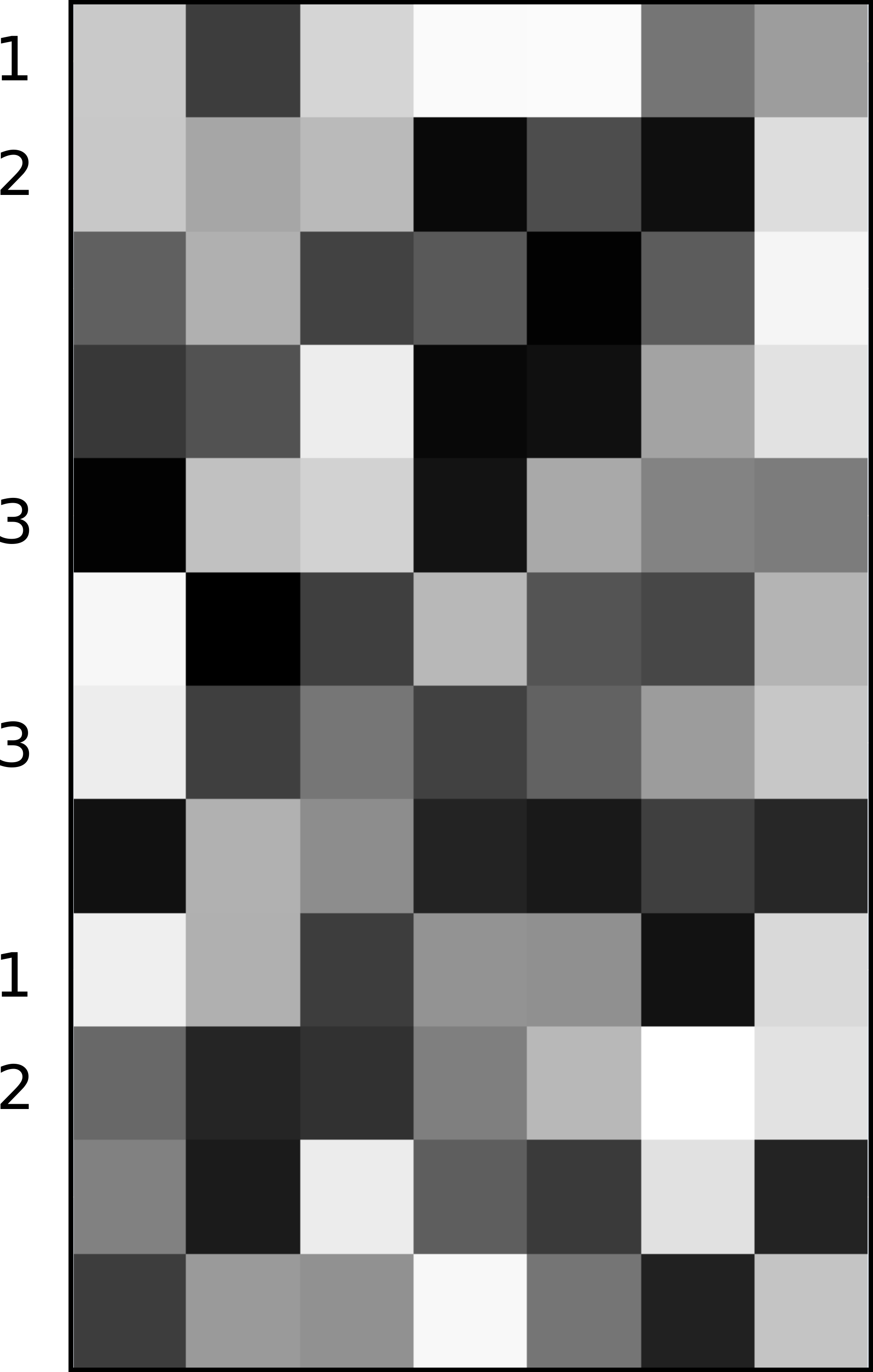}
		\caption{Selection of rows with subsampling size $s=6$.}
    \end{subfigure}
	\hspace{1cm}
    \begin{subfigure}{0.45\textwidth}
		\centering
		\includegraphics[width=.5\textwidth]{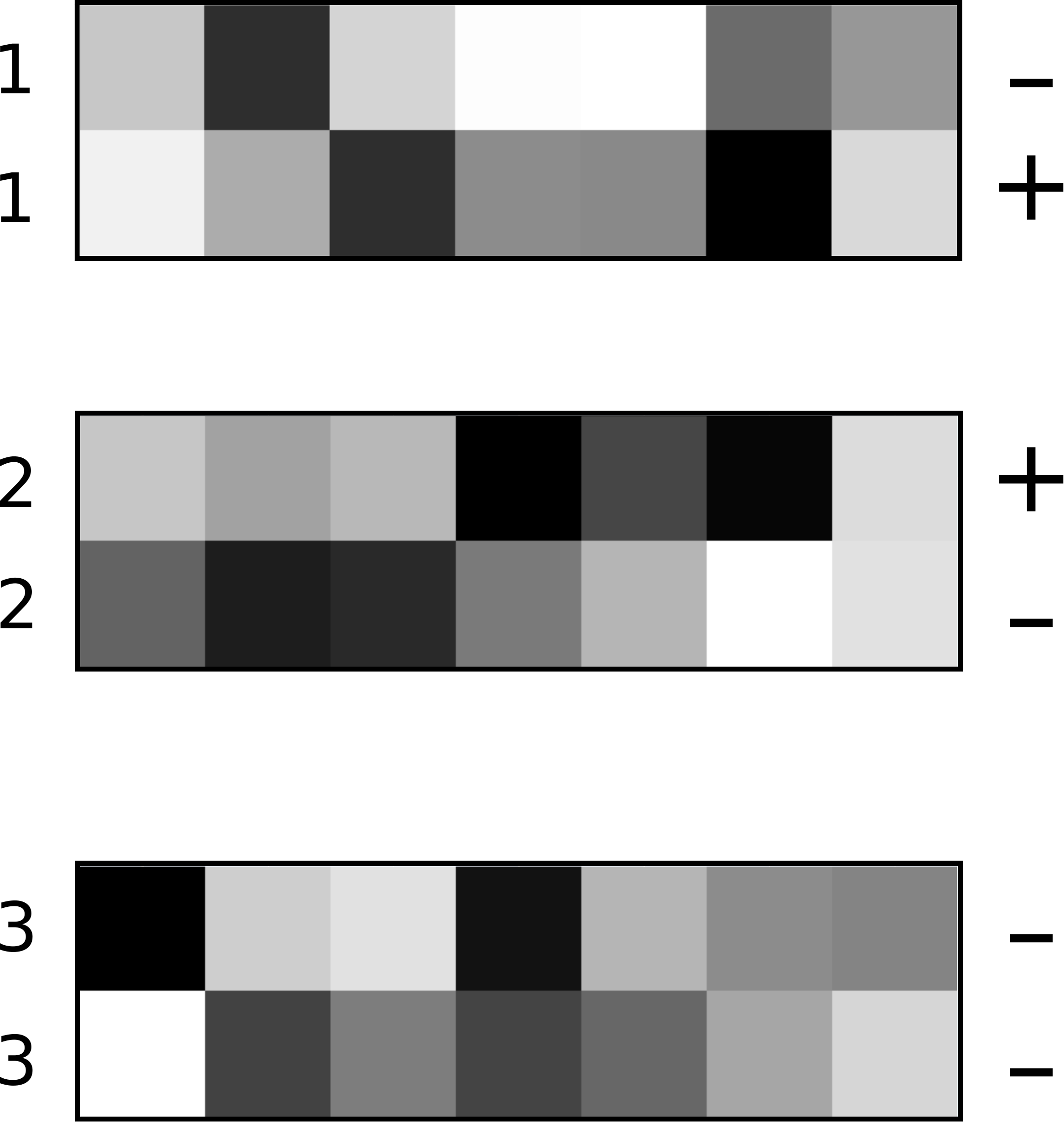}
        \caption{Random sign flip of selected rows that will be summed together with sum size $k=2$.}
    \end{subfigure}
    \caption{SubCount sketch example.}
    \label{fig:subcount}
\end{figure}

\begin{definition}
	A {\bf SubCount sketch} is a matrix $\mS \in \R^{m \times \tau}$ such that $\mS^\top = \mathbf{\Sigma} \mD \mI_C$, where
	\begin{itemize}
		\item $\mI_C \in \R^{s \times m} $ is a subsampling matrix based on a set $C \subset \{1,\ldots, m\}$ chosen uniformly at random from all sets with $s$ elements
		\item $\mD \in \R^{s\times s}$ is a diagonal matrix with elements sampled uniformly from $\{-1, 1\}$
		\item $\mathbf{\Sigma}  \in \R^{\frac{s}{k} \times s}$ is a \emph{sum} matrix, that sums every $k$ contiguous rows together, that is
		\begin{equation}\label{eq:Sigmacount}
			\mathbf{\Sigma} = \begin{bmatrix}
				\underbrace{1 \cdots 1}_{k} & 0 \cdots 0& \cdots & 0 \cdots 0 \\
				0 \cdots 0 & \underbrace{1 \cdots 1}_{k} & \cdots & 0 \cdots 0 \\
				0\cdots 0 & 0  \cdots 0& \cdots & \underbrace{1 \cdots 1}_{k}
			\end{bmatrix}
		\end{equation}
	\end{itemize}
\end{definition}

See Figure~\ref{fig:subcount} for a depiction of the first two steps of SubCount sketch. The resulting sketch size is given by $\tau = s/k \in \N^*$.
In our \texttt{RidgeSketch} package, since the sketch size $\tau$ is a parameter selected by the user, we  adjusted $s$ and $k$ such that $\tau = s/k$ as follows: If $10 \, \tau \leq m$, we set arbitrarily $k$ to $10$, else, we fix $k = \lfloor m / \tau \rfloor$ and then compute $s = k \tau$.
When when we have no subsampling, that is $s =m$, then we simply refer to the SubCount Sketch and Count Sketch\footnote{The standard definition of Count Sketch also shuffles the columns of the summing matrix~\eqref{eq:Sigmacount}. We did not use this shuffling since we found that it had little to no effect on performance in our setting.}.
The advantage of Count sketch is that it enables the computation of $\mS^\top \mA$ with a cost of $\cO (nnz(\mA))$, where $nnz(\mA)$ is the number of non-zero values of $\mA$. Thus, this sketching method is fast for large sparse matrices, especially for the CSR (column sparse rows) format for which accessing rows is efficient. Moreover, because it linearly combines groups of rows of the input matrix, it avoids the pitfall of leaving out meaningful rows of $\mA$, unlike subsampling. This will later be confirmed numerically in~\Cref{sec:mom_exp}.

\subsection{Subsampled Randomized Hadamard Transform}
\label{sec:srht}

Here we consider the \emph{Subsampled Randomized Hadamard Transform} (SRHT)~\cite{woolfe2008fast,tropp2011improved}, which we refer to as \texttt{Hadamard sketch} for short.
\begin{definition}
	\label{def:srht}
	A {\bf Hadamard sketch or SRHT} is a matrix $\mS \in \R^{m \times \tau}$ such that
	\begin{equation}
		\mS^\top = \frac{1}{\sqrt{\tau m}} \mI_C \mH_m \mD \enspace,
	\end{equation}
	where we assume there exists $q \in \N^*$ such that $m=2^q$ with, and
	\begin{itemize}
		\item $\mD \in \R^{m \times m}$ is a diagonal matrix with elements sampled uniformly from $\{-1, 1\}$
		\item $\mH_m \in \R^{m \times m}$ is the Hadamard matrix of order $m$ defined recursively through
		\begin{equation}
			\label{eq:hadamard_matrix}
			\mH_1 =
			\begin{bmatrix}
				1
			\end{bmatrix}, \quad
			\mH_2 =
			\begin{bmatrix}
				1 & 1 \\
				1 & -1
			\end{bmatrix},
			\quad \ldots, \quad
			\mH_{2^q} =
			\begin{bmatrix}
				\mH_{2^{q-1}} & \mH_{2^{q-1}} \\
				\mH_{2^{q-1}} & -\mH_{2^{q-1}}
			\end{bmatrix} \in \R^{m \times m}
		\end{equation}
		\item $\mI_C \in \R^{\tau \times m} $ is a subsampling matrix based on a set $C \subset \{1,\ldots, m\}$ chosen uniformly at random from all sets with $\tau$ elements
	\end{itemize}
\end{definition}

The Hadamard Sketch has recently become popular in several applications throughout numerical  linear algebra because it satisfies the JL Lemma~\cite{boutsidis2013improved} and it can be computed using $\cO (m^2 \log \tau)$ operations
using the  \emph{Fast Walsh-Hadamard Transform} (FWHT)~\cite{fino1976unified} or the recent \emph{Trimmed Walsh-Hadamard Transform} (FWHT)~\cite{ailon2009fast}.
Let $\mH$ be the Hadamard matrix defined in~\eqref{eq:hadamard_matrix} of order $m$. Such methods use a butterfly structure, like the Cooley-Tukey FFT algorithm, to compute $\mH y$ in $\cO \left(m \log m\right)$ time for any $y \in \R^m$, but they require $m$ to be a power of $2$.

This last detail is often glossed over in theory, but we find in practice this poses a challenge when $m$ is large.
One can remove some rows of $\mA$ to meet this assumption, with the risk of losing vital information. Instead,
when $m$ is not a power of $2$, we need to pad  $\mA$ with zero rows until the augmented matrix has $m'$ rows, where $m'$ is a power of $2$. That is, we need $m' = 2^{\lceil \log_2 m \rceil}$ rows.  In the worst case scenario, when $m = 2^q +1$ for some $q\in \N^*$ then $m' = 2^{q+1}$, effectively doubling the number of rows.

\section{Convergence theory}
\label{sec:conv_theory}

Here we present theoretical convergence guarantees of \texttt{RidgeSketch} method~\eqref{eq:xupdate} for  sketch matrices $\mS \in \R^{m \times \tau}$ drawn from a fixed distribution $\cD$.
Later on, in~\Cref{sec:mom} we provide a new convergence theory for a new momentum variant of the sketch-and-project method. To contextualize our contribution, we will first present the previously known convergence theory of sketch-and-project method.

\subsection{Convergence of the iterates}

The sketch-and-project method~\eqref{eq:xupdate} enjoys a linear convergence in L2 given as follows.
\begin{theorem}[Convergence of the iterates, \cite{SDA, Gower2015}]
	\label{thm:iterates_conv}
	Let $w^*$ be a solution of~\eqref{eq:Axb} and let  $w^0 \in \range{\mA}$. Let $t\in \N$ and consider the iterates given in~\eqref{eq:xupdateW}. It follows that
	\begin{equation} \label{eq:L2conv}
		\E{\norm{w^t - w^*}_{\mA}^2} \leq (1- \rho)^t \norm{w^0 -w^*}_{\mA}^2 \enspace,
	\end{equation}
	where
	\begin{equation}
		\rho = \lambda_{\min}^+ (\mA^{1/2}\E{\mS (\mS^\top \mA \mS)^{\dagger}\mS}\mA^{1/2}) \enspace.
	\end{equation}
Consequently, we also have for the residual that
	\begin{equation} \label{eq:L2convres}
		\E{\norm{\mA w^t - b}^2} \leq (1- \rho)^t \lambda_{\max}(\mA)\norm{w^0 -w^*}_{\mA}^2 \enspace.
	\end{equation}
\end{theorem}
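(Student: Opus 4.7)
The plan is to follow the classical sketch-and-project analysis via an orthogonal projector factorization through $\mA^{1/2}$. Since $b = \mA w^*$, subtracting $w^*$ from both sides of the update~\eqref{eq:xupdate} and using $\mA w^k - b = \mA(w^k - w^*)$ yields
\begin{equation*}
    w^{k+1} - w^* = \bigl(\mI - \mS_k(\mS_k^\top \mA \mS_k)^{\dagger}\mS_k^\top \mA\bigr)(w^k - w^*) \enspace.
\end{equation*}
Left-multiplying by $\mA^{1/2}$ gives $\mA^{1/2}(w^{k+1}-w^*) = (\mI-\mP_k)\mA^{1/2}(w^k-w^*)$, where
\begin{equation*}
    \mP_k \eqdef \mA^{1/2}\mS_k(\mS_k^\top \mA \mS_k)^{\dagger}\mS_k^\top \mA^{1/2} \enspace.
\end{equation*}
The key structural fact, obtained from the standard identity $\mM(\mM^\top \mM)^{\dagger}\mM^\top$ being the orthogonal projector onto $\range{\mM}$ applied with $\mM = \mA^{1/2}\mS_k$, is that $\mP_k$ is the orthogonal projector onto $\range{\mA^{1/2}\mS_k}$. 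Hence $\mI-\mP_k$ is also an orthogonal projector.

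Squaring the Euclidean norm and using $(\mI-\mP_k)^2 = \mI-\mP_k$ gives
\begin{equation*}
    \norm{w^{k+1}-w^*}_{\mA}^2 = (w^k-w^*)^\top \mA^{1/2}(\mI-\mP_k)\mA^{1/2}(w^k-w^*) \enspace.
\end{equation*}
Taking conditional expectation with respect to $\mS_k$ given $w^k$ produces
\begin{equation*}
    \E{\norm{w^{k+1}-w^*}_{\mA}^2 \,\big|\, w^k} = \norm{w^k-w^*}_{\mA}^2 - (w^k-w^*)^\top \mA^{1/2}\E{\mP_k}\mA^{1/2}(w^k-w^*) \enspace.
\end{equation*}

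To turn this into a contraction I would show by induction on $k$ that $\mA^{1/2}(w^k-w^*) \in \range{\mA^{1/2}}$: the hypothesis $w^0\in\range{\mA}$ handles the base case, and each update preserves this subspace because $\mP_k$ maps into $\range{\mA^{1/2}\mS_k} \subseteq \range{\mA^{1/2}}$. Restricted to $\range{\mA^{1/2}}$, the symmetric positive semidefinite matrix $\E{\mA^{1/2}\mS(\mS^\top\mA\mS)^{\dagger}\mS^\top\mA^{1/2}}$ has smallest eigenvalue exactly $\rho$, so the quadratic form is bounded below by $\rho\norm{w^k-w^*}_{\mA}^2$. Combining this with the previous display and iterating via the tower property for $t$ steps yields~\eqref{eq:L2conv}. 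The residual bound~\eqref{eq:L2convres} then follows immediately from the eigenvalue sandwich
\begin{equation*}
    \norm{\mA w^t - b}_2^2 = (w^t-w^*)^\top \mA^2 (w^t-w^*) \leq \lambda_{\max}(\mA)\,\norm{w^t-w^*}_{\mA}^2 \enspace,
\end{equation*}
combined with~\eqref{eq:L2conv}.

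The main obstacle is the justification that $\mP_k$ is a genuine orthogonal projector when $\mS_k^\top\mA\mS_k$ is only positive semidefinite (possibly singular, e.g.\ when $\mS_k$ has linearly dependent columns or when $\mA$ has a nontrivial kernel): this requires invoking the pseudoinverse identity with care, as well as carefully tracking that the iterate errors stay in $\range{\mA^{1/2}}$ so that $\lambda_{\min}^+$ rather than a possibly vanishing $\lambda_{\min}$ governs the rate. In the ridge regression setting of this paper, where $\mA = \mB + \lambda \mI \succ 0$, this complication disappears because $\range{\mA}=\R^m$, so the range condition is automatic and $\lambda_{\min}^+ = \lambda_{\min}$.
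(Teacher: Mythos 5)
Your proposal is correct and self-contained, but it takes a different route from the paper: the paper does not reprove~\eqref{eq:L2conv} at all --- it simply cites Theorem 1.1 of~\cite{SDA} for the iterate bound and only supplies the one-line derivation of the residual consequence~\eqref{eq:L2convres} via $\norm{\mA w^t-b}^2=\norm{w^t-w^*}_{\mA^2}^2\le\lambda_{\max}(\mA)\norm{w^t-w^*}_{\mA}^2$, which you reproduce exactly. What you wrote out is essentially the argument of the cited reference: factor the error recursion through $\mA^{1/2}$, recognize $\mP_k=\mA^{1/2}\mS_k(\mS_k^\top\mA\mS_k)^{\dagger}\mS_k^\top\mA^{1/2}$ as the orthogonal projector onto $\range{\mA^{1/2}\mS_k}$ (valid even when $\mS_k^\top\mA\mS_k$ is singular), take conditional expectation, and bound the resulting quadratic form below by $\rho$ times the squared norm. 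This buys the reader a proof the paper omits, at the cost of one point that needs more care than your sketch gives it: the inequality $x^\top\E{\mP}x\ge\rho\norm{x}^2$ with $\rho=\lambda_{\min}^+$ holds only for $x$ orthogonal to $\kernel{\E{\mP}}$, and the component of $\mA^{1/2}(w^k-w^*)$ lying in that null space is annihilated by every $\mP_k$ almost surely, hence is \emph{invariant} under the iteration; the contraction therefore requires this component to vanish at $k=0$. The hypothesis $w^0\in\range{\mA}$ alone does not deliver this --- one needs the exactness condition of~\cite{SDA} (here, equivalently, that $\E{\mH_{\mS}}$ is positive definite, which does hold for all the sketch distributions used in the paper) --- so your closing claim that positive definiteness of $\mA$ by itself makes $\lambda_{\min}^+=\lambda_{\min}$ and removes the issue is slightly too quick. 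With that caveat repaired, your argument is complete and matches the standard analysis underlying the citation.
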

\begin{proof}
The proof of~\eqref{eq:L2conv} is given in Theorem 1.1 in~\cite{SDA}. The convergence of the residual in~\eqref{eq:L2convres} follows as a consequence since
\begin{equation}
	\label{eq:complexL2}
	\norm{\mA w^t - b}^2 = \norm{ w^t - w^*}_{\mA^2}^2 \leq \lambda_{\max}(\mA)  \norm{ w^t - w^*}_{\mA}^2 \enspace.
\end{equation}
\end{proof}

This linear convergence in L2 is often thought of as a gold standard for convergence of stochastic sequences, since it implies convergence in high probability and of all the moments of the sequence. Furthermore, the error decays at an exponential rate determined by $\rho.$ Yet the downside of~\eqref{eq:L2conv} and~\eqref{eq:L2convres} is that the rate of convergence $\rho$ can be very small. Next we present a sublinear rate of convergence in L2 that has an improved rate of convergence.

\subsection{Convergence of the residuals with step size $0 < \gamma < 1$}

The convergence proofs we present next rely on viewing the sketch-and-project method as an instance of \emph{SGD} (stochastic gradient descent)~\cite{Richtarik2017stochastic, SNR}.
To establish this SGD viewpoint,
we first reformulate the problem of solving \eqref{eq:Axb} as the following minimization problem
\begin{equation}
	\label{eq:obj_sgd_reformulation}
	\min_{w \in \R^d} f (w) \eqdef \frac{1}{2} \norm{ \mA w - b }_{\E{\mH_{\mS}}}^2 = \E{\frac{1}{2} \norm{ \mA w - b }_{\mH_{\mS}}^2} =: \E{f_{\mS} (w)} \enspace,
\end{equation}
where
\begin{equation}
	\label{eq:HS}
	\mH_{\mS} \eqdef \mS (\mS^\top \mA \mW^{-1} \mA \mS)^{\dagger} \mS^\top \enspace.
\end{equation}

Solving our linear systems is now equivalent to solving the stochastic minimization problem~\eqref{eq:obj_sgd_reformulation}, for which the classic method is SGD.  It just so happens that, SGD with a step size of $\gamma = 1$ is exactly the  Sketch-and-Project iteration~\eqref{eq:xupdateW}. Indeed, if we compute the gradients with respect to the $\norm{w}_{\mW} = \sqrt{\dotprod{\mW w, w}}$ norm, then SGD is given by
\begin{align}
	w^{k+1} &= w^k - \gamma\nabla_{\mW} f_{\mS} (w^k) \enspace, \label{eq:sgd_step}
\end{align}
where the gradient relative to the weighted inner product $\dotprod{., .}_{\mW}$ is given by
\begin{equation}
	\label{eq:grad}
	\nabla_{\mW} f_{\mS} (w^k) \; \eqdef \; \mW^{-1} \mA \mH_{\mS} \left(\mA w^k - b\right) \enspace,
\end{equation}
 where $\mS \sim \cD$ is sampled i.i.d at each iteration and
 $\gamma > 0$ is the step size. Since $f_{\mS}(w)$ is an unbiased estimate of $f(w)$ we have that
\begin{equation}
	\label{eq:gradunbiased}
	\E{\nabla_{\mW} f_{\mS} (w^k)} \; = \; \nabla_{\mW} f(w^k).
\end{equation}

Using the interpretation as SGD, we can provide the convergence of $\E{f(w^k)}$ to zero. We can then relate the convergence of $f(w^k)$ to the convergence of the residual using the following lemma.
\begin{lemma}
\begin{equation}
	\label{eq:suboptres}
	\lambda_{\min}\left(\E{\mH_{\mS}}\right) \norm{\mA w -b}^2 \; \leq \; 2 f(w) \enspace.
\end{equation}
\end{lemma}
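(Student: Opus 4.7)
The plan is to rewrite $2f(w)$ as a quadratic form in $\mA w-b$ with kernel $\E{\mH_{\mS}}$ and then invoke the Rayleigh quotient inequality. Concretely, by the definition of $f$ in~\eqref{eq:obj_sgd_reformulation},
\begin{equation*}
    2 f(w) \;=\; \norm{\mA w - b}_{\E{\mH_{\mS}}}^2 \;=\; (\mA w - b)^\top \E{\mH_{\mS}} (\mA w - b),
\end{equation*}
so the claim reduces to showing $(\mA w - b)^\top \E{\mH_{\mS}} (\mA w - b) \geq \lambda_{\min}(\E{\mH_{\mS}})\,\norm{\mA w - b}^2$, which is the standard lower bound $v^\top M v \geq \lambda_{\min}(M)\|v\|^2$ for any symmetric positive semidefinite matrix $M$ applied with $M=\E{\mH_{\mS}}$ and $v=\mA w-b$.

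To apply this bound I first need to check that $\E{\mH_{\mS}}$ is symmetric PSD. For each realization of $\mS$, the inner matrix $\mS^\top \mA \mW^{-1} \mA \mS$ is symmetric PSD (since $\mW\succ 0$ implies $\mW^{-1}\succ 0$, making $\mA\mW^{-1}\mA$ symmetric PSD), hence its Moore--Penrose pseudoinverse is symmetric PSD as well, and sandwiching by $\mS$ and $\mS^\top$ preserves this, so $\mH_{\mS}=\mS(\mS^\top \mA \mW^{-1} \mA \mS)^\dagger \mS^\top \succeq 0$. Taking expectation preserves symmetry and positive semidefiniteness, so $\E{\mH_{\mS}}\succeq 0$ and in particular $\lambda_{\min}(\E{\mH_{\mS}})\geq 0$.

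The Rayleigh quotient bound itself follows from the eigendecomposition: write $\E{\mH_{\mS}}=\sum_i \lambda_i u_i u_i^\top$ with $\lambda_i\geq 0$ and $\{u_i\}$ orthonormal, so $v^\top \E{\mH_{\mS}} v=\sum_i \lambda_i (u_i^\top v)^2\geq \lambda_{\min}(\E{\mH_{\mS}})\sum_i (u_i^\top v)^2=\lambda_{\min}(\E{\mH_{\mS}})\|v\|^2$. Combining this with the identity for $2f(w)$ above yields~\eqref{eq:suboptres}. There is no real obstacle here---the only subtle point is that when $\E{\mH_{\mS}}$ is singular one has $\lambda_{\min}(\E{\mH_{\mS}})=0$ and the inequality is trivial (both sides are nonnegative); the bound becomes informative precisely when $\E{\mH_{\mS}}\succ 0$, which is the standard exactness assumption underlying the convergence theory of sketch-and-project.
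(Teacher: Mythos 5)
Your proof is correct and follows essentially the same route as the paper's: both identify $2f(w)=(\mA w-b)^\top\E{\mH_{\mS}}(\mA w-b)$ and then apply the Rayleigh-quotient bound $\lambda_{\min}(M)\mI\preceq M$. You merely spell out the positive semidefiniteness of $\E{\mH_{\mS}}$ and the eigendecomposition argument, which the paper leaves implicit.
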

\begin{proof}
This follows from
\begin{equation*}
	\lambda_{\min}\left(\E{\mH_{\mS}}\right) \norm{\mA w - b}^2
	\; = \; \norm{\mA w - b}_{\lambda_{\min}\left(\E{\mH_{\mS}}\right) \mI_m}^2
	\; \leq \; \norm{\mA w - b}_{\E{\mH_{\mS}}}^2
	\; = \; 2 f( w) \enspace.
\end{equation*}
\ 
\end{proof}

First we need the following property of the gradient taken from Lemma 3.1 in~\cite{Richtarik2017stochastic}.
\begin{lemma}[Gradient norm -- function identity] It follows that
\begin{equation}
	\label{eq:gradeqf}
	\norm{\nabla_{\mW} f_{\mS} (w)}_{\mW}^2 \; = \;  2 f_{\mS} (w) \enspace.
\end{equation}
\end{lemma}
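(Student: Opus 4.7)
The plan is to prove the identity by direct computation, exploiting the definition of the $\mW$-norm together with the pseudoinverse identity $\mG^\dagger \mG \mG^\dagger = \mG^\dagger$ applied to $\mG \eqdef \mS^\top \mA \mW^{-1} \mA \mS$.

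First I would expand the left-hand side. Using the definition of the $\mW$-norm and the gradient formula~\eqref{eq:grad}, I would write
\begin{equation*}
	\norm{\nabla_{\mW} f_{\mS}(w)}_{\mW}^2 = \dotprod{\mW \, \mW^{-1} \mA \mH_{\mS} r, \; \mW^{-1} \mA \mH_{\mS} r} = r^\top \mH_{\mS}^\top \mA^\top \mW^{-1} \mA \mH_{\mS} r,
\end{equation*}
where $r \eqdef \mA w - b$. Since $\mA$ is symmetric and $\mH_{\mS}$ is symmetric (as $\mH_{\mS} = \mS \mG^\dagger \mS^\top$ with $\mG$ symmetric, so $\mG^\dagger$ is also symmetric), this simplifies to $r^\top \mH_{\mS} \mA \mW^{-1} \mA \mH_{\mS} r$.

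Next I would reduce the middle matrix. Substituting $\mH_{\mS} = \mS \mG^\dagger \mS^\top$ gives
\begin{equation*}
	\mH_{\mS} \mA \mW^{-1} \mA \mH_{\mS} = \mS \mG^\dagger \bigl( \mS^\top \mA \mW^{-1} \mA \mS \bigr) \mG^\dagger \mS^\top = \mS \mG^\dagger \mG \mG^\dagger \mS^\top.
\end{equation*}
By the Moore--Penrose property $\mG^\dagger \mG \mG^\dagger = \mG^\dagger$, the right-hand side collapses to $\mS \mG^\dagger \mS^\top = \mH_{\mS}$. Therefore
\begin{equation*}
	\norm{\nabla_{\mW} f_{\mS}(w)}_{\mW}^2 = r^\top \mH_{\mS} r = \norm{\mA w - b}_{\mH_{\mS}}^2 = 2 f_{\mS}(w),
\end{equation*}
which is the desired identity.

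The only subtle step is the symmetry of $\mH_{\mS}$ and the application of the pseudoinverse identity $\mG^\dagger \mG \mG^\dagger = \mG^\dagger$; both hold without requiring $\mG$ to be invertible, so no extra nondegeneracy assumption on $\mS$ is needed. I do not expect any serious obstacle here, since the identity is essentially a bookkeeping consequence of the definitions and standard pseudoinverse properties; this is why the result is stated as a short lemma.
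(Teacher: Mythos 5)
Your proof is correct and follows essentially the same route as the paper's: expand the $\mW$-norm of the gradient, collapse $\mH_{\mS}\mA\mW^{-1}\mA\mH_{\mS}$ to $\mH_{\mS}$ via the Moore--Penrose identity $\mM^\dagger\mM\mM^\dagger = \mM^\dagger$ applied to $\mM = \mS^\top\mA\mW^{-1}\mA\mS$, and identify the result with $2f_{\mS}(w)$. Your explicit remark on the symmetry of $\mH_{\mS}$ is a small detail the paper leaves implicit, but the argument is otherwise identical.
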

\begin{proof}
	For completeness we give the proof. By straight forward computation we have that
	\begin{eqnarray}
	\norm{\nabla_{\mW} f_{\mS} (w)}_{\mW}^2 &\overset{\eqref{eq:grad}}{=}& \dotprod{\mW^{-1} \mA \mH_{\mS} \left(\mA w - b\right) , \mW^{-1} \mA \mH_{\mS} \left(\mA w - b\right)}_{\mW} \nonumber \\
	&=&\left(\mA w - b\right)^\top \mH_{\mS} \mA \mW^{-1} \mA \mH_{\mS} \left(\mA w - b\right) \nonumber \\
	&\overset{\eqref{eq:HS}}{=}& \left(\mA w - b\right)^\top \mH_{\mS} \left(\mA w - b\right) \nonumber \\
	&\overset{\eqref{eq:obj_sgd_reformulation}}{=}& 2 f_{\mS} (w) \enspace. \nonumber
	\end{eqnarray}
	where in the third equality we expanded $\mH_{\mS}$ and applied the identity $\mM^\dagger = \mM^\dagger \mM \mM^\dagger$ with $\mM = \mS^\top \mA \mW^{-1} \mA \mS$.
\end{proof}
Furthermore, the  functions $f_{\mS}$ are convex.
\begin{lemma} The function $f_{\mS}$ is a convex quadratic. Consequently
\begin{equation}
	\label{eq:convex}
	f_{\mS}(y) \geq f_{\mS}(x) + \dotprod{\nabla_{\mW} f_{\mS}(x), y - x}_{\mW} \enspace.
\end{equation}
\end{lemma}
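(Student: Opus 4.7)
The plan is to first establish that $\mH_{\mS}$ is a symmetric positive semi-definite matrix, and then pull back through the linear map $w \mapsto \mA w - b$ to conclude that $f_{\mS}$ is a convex quadratic form in $w$. The convexity inequality will then follow from the standard second-order Taylor expansion of a quadratic, after taking care to translate between the standard gradient and the $\mW$-gradient.

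First I would show symmetry and positive semi-definiteness of $\mH_{\mS}$. Recall that $\mH_{\mS} = \mS \mM^\dagger \mS^\top$ with $\mM = \mS^\top \mA \mW^{-1} \mA \mS = (\mA \mS)^\top \mW^{-1} (\mA \mS)$. Since $\mW$ is symmetric positive definite, so is $\mW^{-1}$, and therefore $\mM$ is symmetric positive semi-definite. The Moore-Penrose pseudoinverse of a symmetric PSD matrix is again symmetric PSD, so $\mM^\dagger$ inherits these properties. Then for any $v \in \R^m$,
\begin{equation*}
v^\top \mH_{\mS} v \;=\; (\mS^\top v)^\top \mM^\dagger (\mS^\top v) \;\geq\; 0 \enspace,
\end{equation*}
and symmetry of $\mH_{\mS}$ follows from that of $\mM^\dagger$.

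Next I would deduce that $f_{\mS}$ is a convex quadratic. Expanding $f_{\mS}(w) = \tfrac{1}{2}(\mA w - b)^\top \mH_{\mS}(\mA w - b)$ shows it is a quadratic polynomial in $w$. Since $\mA$ is symmetric (being $\mB + \lambda \mI$ with $\mB$ symmetric PSD), its Hessian in the standard inner product is $\mA \mH_{\mS} \mA$, which is symmetric PSD thanks to the previous step. Hence $f_{\mS}$ is convex, and Taylor's formula for a quadratic gives the exact identity
\begin{equation*}
f_{\mS}(y) \;=\; f_{\mS}(x) + \dotprod{\nabla f_{\mS}(x), y - x} + \tfrac{1}{2}(y-x)^\top \mA \mH_{\mS} \mA (y-x) \enspace,
\end{equation*}
where $\nabla f_{\mS}$ denotes the Euclidean gradient. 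Dropping the nonnegative quadratic remainder yields $f_{\mS}(y) \geq f_{\mS}(x) + \dotprod{\nabla f_{\mS}(x), y-x}$.

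Finally I would translate the inner product. By definition of the $\mW$-gradient, $\nabla f_{\mS}(x) = \mW \, \nabla_{\mW} f_{\mS}(x)$, so
\begin{equation*}
\dotprod{\nabla f_{\mS}(x), y-x} \;=\; \dotprod{\mW \nabla_{\mW} f_{\mS}(x), y-x} \;=\; \dotprod{\nabla_{\mW} f_{\mS}(x), y-x}_{\mW} \enspace,
\end{equation*}
and the desired inequality~\eqref{eq:convex} follows. There is no real obstacle in this argument; the only point requiring care is keeping track of which inner product is in play when relating $\nabla$ and $\nabla_{\mW}$.
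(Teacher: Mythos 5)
Your proof is correct and follows essentially the same route as the paper's: compute the Euclidean Hessian $\mA \mH_{\mS} \mA$, observe it is positive semi-definite to get convexity, and then convert $\dotprod{\nabla f_{\mS}(x), y-x}$ into $\dotprod{\nabla_{\mW} f_{\mS}(x), y-x}_{\mW}$ using $\nabla f_{\mS} = \mW \nabla_{\mW} f_{\mS}$. The only difference is that you explicitly verify that $\mH_{\mS}$ is symmetric PSD via the pseudoinverse of $\mM = (\mA\mS)^\top \mW^{-1}(\mA\mS)$, a step the paper simply asserts; this is a welcome but minor addition.
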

\begin{proof}
Let $w \in \R^d$ and $\mS \sim \cD$, the gradient relative to the Euclidean inner product $\dotprod{., .}$ is
\begin{equation*}
	\nabla f_{\mS} (x) = \mA \mH_{\mS} \left(\mA x - b\right) \enspace,
\end{equation*}
and thus the hessian is
\begin{equation*}
	\nabla^2 f_{\mS} (x) = \mA \mH_{\mS} \mA \enspace,
\end{equation*}
which is a semi-definite positive matrix. This implies that $f_{\mS}$ is convex.
As a consequence~\eqref{eq:convex} holds since
\begin{align*}
	f_{\mS}(y) &\geq f_{\mS}(x) + \dotprod{\nabla f_{\mS}(x), y - x} \\
			   &= f_{\mS}(x) + \dotprod{\nabla_{\mW} f_{\mS}(x), y - x}_{\mW} \enspace,
\end{align*}
and given that $\dotprod{\nabla_{\mW} f_{\mS}(x), y - x}_{\mW} =\dotprod{\mW\mW^{-1}\nabla f_{\mS}(x), y - x} = \dotprod{\nabla f_{\mS}(x), y - x}$.
\end{proof}

Next we establish  the sublinear convergence of the \emph{average of the iterates} of Sketch-and-project method. This result is a direct consequence of Theorem 4.10 in~\cite{Richtarik2017stochastic} and has already been proven in Theorem 3 in~\cite{Loizou2020mom}. We present the complete statement and proof since it is a warm-up for our forthcoming results, and since the proof is substantially simpler than the one presented in~\cite{Loizou2020mom}.
\begin{theorem}[Convergence of the residuals]
	\label{thm:residual_conv}
	Let $\gamma \in (0, 1)$. Let $w^*$ be a solution of~\eqref{eq:Axb}, let $t\in \N^*$ and let $(w^k)_{0 \leq k \leq t}$ be the iterates given by~\eqref{eq:sgd_step}. It follows that
	\begin{equation}
		\label{eq:convsublinear}
	 	f \left(\overline{w}^t\right) \; \leq \; \frac{1}{t} \sum_{k=0}^t f \left(w^k\right) \; \leq \; \frac{1}{t} \frac{ \norm{w^0 - w^*}_{\mW}^2 }{2\gamma(1- \gamma ) } \enspace,
	\end{equation}
	where $\overline{w}^t \eqdef \frac{1}{t} \sum_{k=0}^{t} w^k.$ Furthermore
	\begin{equation}
		\label{eq:convressublinear}
		\norm{\mA \overline{w}^t - b}^2 \;\leq  \;\frac{1}{t}  \frac{1}{\lambda_{\min}\left(\E{\mH_{\mS}}\right) }  \frac{ \norm{w^0 - w^*}_{\mW}^2 }{2\gamma(1- \gamma ) } \enspace.
	\end{equation}
\end{theorem}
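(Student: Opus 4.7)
The plan is to recognize this as the standard SGD convergence analysis for a convex objective whose noise vanishes at the optimum, specialized to the stochastic reformulation~\eqref{eq:obj_sgd_reformulation}. All three required ingredients are already in hand: the gradient-function identity~\eqref{eq:gradeqf}, the convexity inequality~\eqref{eq:convex}, and the unbiasedness~\eqref{eq:gradunbiased}. The only additional structural fact I would use is that any solution $w^*$ of $\mA w = b$ satisfies $f_{\mS}(w^*) = 0$ for every $\mS$, since $\mA w^* - b = 0$ in~\eqref{eq:obj_sgd_reformulation}.

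First I would expand a single step of~\eqref{eq:sgd_step} in the $\mW$--norm:
\begin{equation*}
\norm{w^{k+1} - w^*}_{\mW}^2 = \norm{w^k - w^*}_{\mW}^2 - 2\gamma \dotprod{\nabla_{\mW} f_{\mS_k}(w^k), w^k - w^*}_{\mW} + \gamma^2 \norm{\nabla_{\mW} f_{\mS_k}(w^k)}_{\mW}^2.
\end{equation*}
Applying convexity~\eqref{eq:convex} with $y = w^*$ and $x = w^k$ gives $\dotprod{\nabla_{\mW} f_{\mS_k}(w^k), w^k - w^*}_{\mW} \geq f_{\mS_k}(w^k) - f_{\mS_k}(w^*) = f_{\mS_k}(w^k)$, and the gradient norm identity~\eqref{eq:gradeqf} rewrites the last term as $2\gamma^2 f_{\mS_k}(w^k)$. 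Combining,
\begin{equation*}
\norm{w^{k+1} - w^*}_{\mW}^2 \leq \norm{w^k - w^*}_{\mW}^2 - 2\gamma(1-\gamma)\, f_{\mS_k}(w^k),
\end{equation*}
which is where the restriction $\gamma \in (0,1)$ is needed so that the coefficient is positive.

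Next I would take total expectation, apply the tower property together with~\eqref{eq:gradunbiased} to replace $\E{f_{\mS_k}(w^k)}$ by $\E{f(w^k)}$ (since $w^k$ is independent of $\mS_k$), telescope over $k = 0, \ldots, t-1$, and drop the nonnegative term $\E{\norm{w^t - w^*}_{\mW}^2}$ to obtain
\begin{equation*}
\frac{1}{t}\sum_{k=0}^{t-1} \E{f(w^k)} \;\leq\; \frac{\norm{w^0 - w^*}_{\mW}^2}{2\gamma(1-\gamma)\,t}.
\end{equation*}
The first inequality in~\eqref{eq:convsublinear} is Jensen's inequality applied to the convex function $f$ and the running average $\overline{w}^t$. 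The residual bound~\eqref{eq:convressublinear} is then immediate from Lemma~\eqref{eq:suboptres} applied at $w = \overline{w}^t$, dividing by $\lambda_{\min}(\E{\mH_{\mS}})$.

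There is no serious obstacle here: the argument is a one-step descent lemma followed by telescoping. The only mildly delicate point is that $f_{\mS}(w^*) = 0$ deterministically (not merely in expectation), which is what kills the usual variance term appearing in generic SGD bounds and allows a non-diminishing step size. Care is needed with the indexing convention in the statement (the sum runs up to $t$ but is divided by $t$, which is harmless since dropping any nonnegative $f(w^t)$ term only tightens the bound), and with keeping the inner product in the $\mW$--geometry throughout so that~\eqref{eq:gradeqf} applies cleanly.
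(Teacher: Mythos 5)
Your proposal is correct and follows essentially the same route as the paper's proof: expand one step in the $\mW$--norm, apply the gradient--function identity~\eqref{eq:gradeqf} and convexity~\eqref{eq:convex} (using $f_{\mS}(w^*)=0$) to get the per-step decrease $2\gamma(1-\gamma)f_{\mS}(w^k)$, telescope, apply Jensen, and finish with~\eqref{eq:suboptres}. If anything, your version is slightly more careful than the paper's in explicitly taking total expectations via the tower property and unbiasedness, a step the paper glosses over.
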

\begin{proof}
\begin{eqnarray*}
	\norm{w^{k+1} - w^*}_{\mW}^2 & =& \norm{w^k - w^*}_{\mW}^2 -2\gamma \dotprod{\nabla_{\mW} f_{\mS} \left(w^k\right), w^k - w^*}_{\mW} + \gamma^2 \norm{\nabla_{\mW} f_{\mS} \left(w^k\right)}_{\mW}^2 \\
	&\overset{\eqref{eq:gradeqf}}{=} & \norm{w^k - w^*}_{\mW}^2 -2\gamma \dotprod{\nabla_{\mW} f_{\mS} \left(w^k\right), w^k - w^*}_{\mW} +2 \gamma^2 f_{\mS} \left(w^k\right) \\
	&\overset{\eqref{eq:convex}}{\leq } &\norm{w^k - w^*}_{\mW}^2 -2\gamma(1- \gamma )  f_{\mS} \left(w^k\right) \enspace.
\end{eqnarray*}
Re-arranging, and dividing through by $\gamma(1- \gamma ) >0$ we have that
\begin{eqnarray*}
	f_{\mS} \left(w^k\right) & \leq & \frac{1}{2\gamma(1- \gamma ) } \left( \norm{w^k -w^*}_{\mW}^2 -\norm{w^{k+1} -w^*}_{\mW}^2\right) \enspace.
\end{eqnarray*}
Summing up over both sides for $k=0, \ldots, t$ and using telescopic cancellation we have that
\begin{eqnarray}
	\label{eq:sum_eqs}
	\sum_{k=0}^t f_{\mS} \left(w^k\right) & \leq & \frac{1}{2\gamma(1- \gamma ) } \left( \norm{w^0 -w^*}_{\mW}^2 -\norm{w^{t+1} -w^*}_{\mW}^2\right) \; \leq \;\frac{\norm{w^0 -w^*}_{\mW}^2 }{2\gamma(1- \gamma ) } \enspace.
\end{eqnarray}
By applying the Jensen's inequality to $f_{\mS}$, which is convex since it is a quadratic form,
\begin{equation}
	\label{eq:jensen}
	f_{\mS} \left(\frac{1}{t} \sum_{k=0}^t w^k\right) \leq \frac{1}{t} \sum_{k=0}^t f_{\mS} \left(w^k\right) \enspace,
\end{equation}
Now, by denoting $\overline{w}^t \eqdef \frac{1}{t} \sum_{k=0}^t w^k$, the convergence result~\eqref{eq:convsublinear} follows by dividing~\eqref{eq:sum_eqs} by $t$ and using~\eqref{eq:jensen}. Finally, the convergence of the residual in~\eqref{eq:convressublinear} follows from~\eqref{eq:suboptres}.
\end{proof}

The weakness of~\Cref{thm:residual_conv} is that it describes how the average of the iterates $\overline{w}^t$ converge instead of the last one $w^t$.
This type of convergence is problematic since $\overline{w}^t$ gives as much importance, a $1/t$ weight, to the initial point $w^0$ as to the last one $w^t$.
Since $w^0$ is often chosen arbitrarily, the average of the iterates can converge only as fast as $w^0$ is forgotten.
That is, the average cannot converge faster than $1/t$.
This is  apparent in experiments, where using averaging from the start results in a slow convergence.
In practice, averaging only the last few iterates works substantially better, but it is not supported in theory.
To resolve this issue, we will replace this \emph{equal} averaging with a \emph{weighted} average that gives more weight to recent iterates, and forgets the initial conditions exponentially fast.

\section{Momentum}
\label{sec:mom}

A common variant of SGD is to add momentum.
Since the sketch-and-project method can be interpreted as SGD~\eqref{eq:sgd_step}, we can add momentum. Let $\gamma_k \in [0,\;1 ]$ and $\beta_k  \in [0,\;1 ]$
be respectively the step size and the momentum parameter. The heavy ball formulation of momentum is given by
\begin{equation}
	\label{eq:sgd_stepmom}
	w^{k+1} = w^k -\gamma_k\nabla_{\mW} f_{\mS} \left(w^k\right) +\beta_k \left(w^k-w^{k-1}\right) \enspace.
\end{equation}
Note that we have now allowed for a step size $\gamma_k$ that is iteration dependent.

This same heavy ball formulation was considered in~\cite{Loizou2020mom}, where the authors propose a precise analysis and show no benefit using momentum.
But, all of their analysis assumes that the momentum parameter is constant. It turns out, that by allowing $\beta_k$ to be iteration dependent, we can do better.
But first, we need the \emph{iterative averaging viewpoint} of momentum.

\subsection{Iterate Averaging Viewpoint}

Recently, a new \emph{iterate averaging}  parametrization of the momentum method was proposed in~\cite{Sebbouh2020}.
This iterative averaging parametrization is given by
\begin{align}
	z^k & = z^{k-1} -\eta_k \nabla_{\mW} f_{\mS} (w^k) \label{eq:zupdateap}\\
	w^{k+1} & =\left(1 -  \frac{1}{\zeta_{k+1}+1}\right) w^k + \frac{1}{\zeta_{k+1}+1}  z^k \enspace, \label{eq:iterateavap}
\end{align}
where we have introduced two new parameter sequences $\eta_k$ and  $ \zeta_k$ that map back to the $\gamma_k$ and $\beta_k$ parameters via
\begin{equation}
	\label{eq:etalambdatogammbetagen}
	\gamma_k =\frac{\eta_k}{\zeta_{k+1}+1}\quad \mbox{and}\quad \beta_k = \frac{\zeta_k}{\zeta_{k+1}+1} \enspace.
\end{equation}

Next we show that~\eqref{eq:iterateavap} produces the same $w_k$ iterates as~\eqref{eq:sgd_stepmom}.

\begin{proposition}[Equivalent formulations]
	\label{prop:equivalence_mom}
	Let $t \in \N^*$. The steps given in~\eqref{eq:sgd_stepmom} and in~\eqref{eq:zupdateap}--\eqref{eq:iterateavap} generate the same iterates $(w_k)_{0 \leq k \leq t}$.
\end{proposition}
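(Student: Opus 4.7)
The plan is to eliminate the auxiliary sequence $(z^k)$ from the iterate averaging formulation and recover the heavy-ball recursion directly. The key observation is that the averaging step~\eqref{eq:iterateavap} is an affine relation between $w^{k+1}$, $w^k$ and $z^k$, so it can be inverted to express $z^k$ in terms of $w^{k+1}$ and $w^k$. Writing $\alpha_{k+1} \eqdef 1/(\zeta_{k+1}+1)$, one gets
\[
z^k \; = \; \tfrac{1}{\alpha_{k+1}}\bigl(w^{k+1} - (1-\alpha_{k+1})w^k\bigr),
\qquad
z^{k-1} \; = \; \tfrac{1}{\alpha_k}\bigl(w^k - (1-\alpha_k)w^{k-1}\bigr).
\]
I would then substitute both expressions into~\eqref{eq:zupdateap} to obtain an identity involving only $w^{k+1}, w^k, w^{k-1}$ and the stochastic gradient at $w^k$.

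Next I would solve for $w^{k+1}$ and use the identities $(1-\alpha_k)/\alpha_k = \zeta_k$ and $\alpha_{k+1}/\alpha_k = \alpha_{k+1}(\zeta_k+1)$, together with the parameter map~\eqref{eq:etalambdatogammbetagen}, to rewrite the coefficients. A short computation should yield the coefficient of $w^{k-1}$ as exactly $-\beta_k$, the coefficient of the gradient as $-\gamma_k$, and the coefficient of $w^k$ as $1+\beta_k$. Reorganising, this gives
\[
w^{k+1} \; = \; w^k - \gamma_k \nabla_{\mW} f_{\mS}(w^k) + \beta_k(w^k - w^{k-1}),
\]
which is precisely~\eqref{eq:sgd_stepmom}.

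Finally, I would address the initialisation to make the equivalence iterate-by-iterate. The heavy-ball recursion at $k=0$ implicitly uses a dummy iterate $w^{-1}$, and the averaging recursion uses a dummy $z^{-1}$. Setting $w^{-1} \eqdef w^0$ and $z^{-1} \eqdef w^0$ one checks that both formulations produce the same $w^1$ (namely $w^0 - \gamma_0 \nabla_{\mW} f_{\mS}(w^0)$ since $\beta_0(w^0-w^{-1})=0$ in the heavy ball form, and by direct substitution in the averaging form). An induction on $k$ combined with the algebraic identity above then yields $w^k$ agreement for all $0\le k\le t$.

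The proof is essentially bookkeeping; there is no genuine obstacle, only the need to keep the index of $\zeta_{k+1}$ versus $\zeta_k$ straight when performing the substitution, since mixing them up would give an incorrect coefficient for $w^{k-1}$. I would keep the computation clean by introducing the shorthand $\alpha_k = 1/(\zeta_k+1)$ throughout, and only convert back to $\beta_k, \gamma_k$ at the very end via~\eqref{eq:etalambdatogammbetagen}.
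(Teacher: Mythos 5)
Your proposal is correct and follows essentially the same route as the paper: both arguments eliminate the auxiliary sequence $z^k$ via the affine relation $z^{k-1}-w^k=\zeta_k\bigl(w^k-w^{k-1}\bigr)$ (your inversion $z^{k-1}=\tfrac{1}{\alpha_k}\bigl(w^k-(1-\alpha_k)w^{k-1}\bigr)$ is the same identity) and then convert the coefficients through~\eqref{eq:etalambdatogammbetagen}. The only cosmetic difference is that the paper expands $w^{k+1}$ from~\eqref{eq:iterateavap} and identifies the leftover term as the momentum term, whereas you substitute both inverted expressions into~\eqref{eq:zupdateap}; your explicit handling of the initialization $w^{-1}=w^0$, $z^{-1}=w^0$ is a detail the paper leaves implicit.
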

\begin{proof}
	First, let us expand the \emph{iterate averaging} parametrization
	\begin{align}
		w^{k+1} &\overset{\eqref{eq:iterateavap}}{=} \left(1 - \frac{1}{\zeta_{k+1}+1} \right) w^k + \frac{1}{\zeta_{k+1}+1} z^k \nonumber \\
		&\overset{\eqref{eq:zupdateap}}{=} w^k - \frac{1}{\zeta_{k+1}+1} w^k + \frac{1}{\zeta_{k+1}+1} \left(z^{k-1} -\eta \nabla_{\mW} f_{\mS} (w^k)\right) \nonumber \\
		&= w^k - \frac{\eta}{\zeta_{k+1}+1} \nabla_{\mW} f_{\mS} (w^k) + \frac{1}{\zeta_{k+1}+1} \left(z^{k-1} - w^k\right) \nonumber \\
		&= w^k - \gamma_k \nabla_{\mW} f_{\mS} (w^k) + \frac{1}{\zeta_{k+1}+1} \left(z^{k-1} - w^k\right) \enspace, \label{eq:expanding_iterateavgen}
	\end{align}
	where in the last step we used the left-hand side relation in~\eqref{eq:etalambdatogammbetagen}.
To conclude the proof, we need only show that
	\[\frac{1}{\zeta_{k+1}+1} \left(z^{k-1} - w^k\right)  = \beta_k (w_k-w_{k-1}) \enspace. \]
Indeed, this follows  $w^k = \left(1 - \frac{1}{\zeta_{k}+1} \right) w^{k-1} + \frac{1}{\zeta_{k}+1} z^{k-1}$, which can be rearranged in $z^{k-1} - w^k = \zeta_{k} \left(w^k - w^{k-1}\right)$, consequently
\[ \frac{1}{\zeta_{k+1}+1} \left(z^{k-1} - w^k\right) = \frac{\zeta_k}{\zeta_{k+1}+1} \left(w^{k} - w^{k-1}\right)
\overset{\eqref{eq:etalambdatogammbetagen}}{=} \beta_k (w_k-w_{k-1}) \enspace, \]
which together with~\eqref{eq:expanding_iterateavgen} concludes the proof.
\end{proof}

Next we show how to leverage the iterative averaging viewpoint~\eqref{eq:zupdateap}--\eqref{eq:iterateavap} to prove the convergence of the last iterates of sketch-and-project with momentum~\eqref{eq:sgd_stepmom}.

\subsection{Convergence theorem}
\label{sec:momconv}

\begin{theorem}
	\label{thm:momap}
	Consider the iterates~\eqref{eq:zupdateap}--\eqref{eq:iterateavap}  with $w^{0} =w^{-1}$. Let $\eta_k $ be a sequence of parameters with $0<\eta_k < 1$. If
	\begin{equation}
		\zeta_0 = 0 \quad \mbox{and}\quad \zeta_k = \frac{1}{\eta_k} \sum_{t=0}^{k-1} \eta_t (1- \eta_t) \enspace, \quad \mbox{for all }k \geq 1 \enspace. \label{eq:zeta_kaptheo}
	\end{equation}
	then
	\begin{eqnarray}
		\label{eq:momconv}
		\norm{\mA w^k -b}^2 & \leq  & \frac{1}{\lambda_{\min}\left(\E{\mH_{\mS}}\right)} \frac{ \norm{w^{0} -w^*}_{\mW}^2  }{ \sum_{t=0}^{k} \eta_t (1- \eta_t) } \enspace.
	\end{eqnarray}
\end{theorem}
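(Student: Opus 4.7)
The plan is to work directly in the iterate-averaging parameterization~\eqref{eq:zupdateap}--\eqref{eq:iterateavap} from the start, since in those coordinates the update for $z^k$ is literally a vanilla stochastic gradient step on $f_{\mS}$, and this is exactly the setting where the gradient--function identity~\eqref{eq:gradeqf} and the convexity bound~\eqref{eq:convex} combine into a clean descent inequality, just as in the proof of Theorem~\ref{thm:residual_conv}. Proposition~\ref{prop:equivalence_mom} guarantees that whatever rate is obtained for the $w^k$ produced by~\eqref{eq:zupdateap}--\eqref{eq:iterateavap} automatically transfers to the heavy-ball iterates~\eqref{eq:sgd_stepmom}.

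First I would expand $\norm{z^k - w^*}_{\mW}^2$ using~\eqref{eq:zupdateap} and apply~\eqref{eq:gradeqf} to the quadratic term, which gives
\begin{equation*}
\norm{z^k - w^*}_{\mW}^2 = \norm{z^{k-1} - w^*}_{\mW}^2 - 2 \eta_k \dotprod{\nabla_{\mW} f_{\mS}(w^k), z^{k-1} - w^*}_{\mW} + 2 \eta_k^2 f_{\mS}(w^k).
\end{equation*}
The crucial step is to rewrite the inner product using the identity $z^{k-1} - w^k = \zeta_k (w^k - w^{k-1})$ that already appeared inside the proof of Proposition~\ref{prop:equivalence_mom}, splitting it into a ``descent'' piece from $w^{k-1}$ to $w^k$ and an ``optimality'' piece from $w^k$ to $w^*$. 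Applying~\eqref{eq:convex} to each piece and using $f_{\mS}(w^*) = 0$ (since $\mA w^* = b$) yields
\begin{equation*}
\dotprod{\nabla_{\mW} f_{\mS}(w^k), z^{k-1} - w^*}_{\mW} \;\geq\; (\zeta_k + 1) f_{\mS}(w^k) - \zeta_k f_{\mS}(w^{k-1}).
\end{equation*}

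Plugging this back, taking expectation over $\mS_k$ (so that $f_{\mS}$ becomes $f$, using~\eqref{eq:obj_sgd_reformulation}), and introducing the shorthand $a_k \eqdef \eta_k(\zeta_k + 1 - \eta_k)$ and $b_k \eqdef \eta_k \zeta_k$, I would obtain the one-step recursion
\begin{equation*}
\E{\norm{z^k - w^*}_{\mW}^2} + 2 a_k \E{f(w^k)} \;\leq\; \E{\norm{z^{k-1} - w^*}_{\mW}^2} + 2 b_k \E{f(w^{k-1})}.
\end{equation*}
The definition~\eqref{eq:zeta_kaptheo} gives $\eta_k \zeta_k = \sum_{t=0}^{k-1} \eta_t(1-\eta_t)$, hence $a_k = \sum_{t=0}^{k} \eta_t(1-\eta_t)$ and, most importantly, $b_{k+1} = a_k$: this is precisely the matching condition that turns the two ``$f$-terms'' into a telescoping Lyapunov quantity. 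With the initialization $w^{-1} = w^0$ and $\zeta_0 = 0$ forcing $z^{-1} = w^0$ and $b_0 = 0$, iterating the recursion collapses everything to $2 a_k \E{f(w^k)} \leq \norm{w^0 - w^*}_{\mW}^2$, and the residual bound~\eqref{eq:momconv} follows by dividing by $a_k$ and invoking~\eqref{eq:suboptres}.

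The main obstacle I expect is the coefficient bookkeeping: the working Lyapunov quantity is not the naive $\norm{z^k - w^*}_{\mW}^2 + 2 a_k f(w^k)$ but the same object shifted by one step so that the unwanted $f(w^{k-1})$ contribution generated by convexity on the $z^{k-1} - w^k$ piece is exactly absorbed into $b_k$. Reverse-engineering the formula~\eqref{eq:zeta_kaptheo} from the single requirement $b_{k+1} = a_k$ is what makes a \emph{last-iterate} sublinear rate go through, as opposed to the averaged rate of Theorem~\ref{thm:residual_conv}; any other sequence $\zeta_k$ breaks the telescoping and leaves an uncontrolled residual term.
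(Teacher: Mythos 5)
Your proposal is correct and follows essentially the same route as the paper's proof: the same Lyapunov quantity $\E{\norm{z^k-w^*}_{\mW}^2} + 2\eta_k\zeta_k\E{f(w^k)}$, the same use of the gradient--function identity~\eqref{eq:gradeqf} and convexity~\eqref{eq:convex} on the decomposition $z^{k-1}-w^* = (w^k-w^*)+\zeta_k(w^k-w^{k-1})$, and your matching condition $b_{k+1}=a_k$ is exactly the paper's identity $\eta_k(1+\zeta_k-\eta_k)=\eta_{k+1}\zeta_{k+1}$ enforced by~\eqref{eq:zeta_kaptheo}. The telescoping, initialization $z^{-1}=w^0$, $\zeta_0=0$, and final invocation of~\eqref{eq:suboptres} all coincide with the paper's argument.
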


\begin{proof}
This proof is based on Theorem 3.1 in~\cite{Sebbouh2020} for SGD applied to convex and smooth functions.
Consider the Lyapunov function
\begin{equation}
	L_k \; = \;\E{\norm{z^k-w^*}_{\mW}^2} + 2\eta_k\zeta_k \E{f(w^k)} \enspace.
\end{equation}
First note that
\begin{align*}
	\norm{z^k-w^*}_{\mW}^2 &\!\!\overset{\eqref{eq:zupdateap}}{=}\!\! \norm{z^{k-1} -\eta_k \nabla_{\mW} f_{\mS} (w^k) -w^*}_{\mW}^2  \\
	&= \norm{z^{k-1} -w^*}_{\mW}^2 -2\eta_k \dotprod{z^{k-1} -w^*,  \nabla_{\mW} f_{\mS} (w^k) }_{\mW} + \eta_k^2 \norm{\nabla_{\mW} f_{\mS} (w^k) }_{\mW}^2 \\
	&\!\!\overset{\eqref{eq:iterateavap}}{=}\! \norm{z^{k-1} -w^*}_{\mW}^2 \!-\!2\eta_k \dotprod{w^k -w^* +\zeta_k(w^k-w^{k-1}, \nabla_{\mW} f_{\mS} (w^k) }_{\mW} \!+\! \eta_k^2 \norm{\nabla_{\mW} f_{\mS} (w^k) }_{\mW}^2 \\
	&= \norm{z^{k-1} -w^*}_{\mW}^2 -2\eta_k \dotprod{w^k -w^*, \nabla_{\mW} f_{\mS} (w^k) }_{\mW} \!-\! 2\eta_k \zeta_k\dotprod{ w^k-w^{k-1}, \nabla_{\mW} f_{\mS} (w^k) }_{\mW}\\
	&\quad + \eta_k^2 \norm{\nabla_{\mW} f_{\mS} (w^k) }_{\mW}^2 \\
	&\overset{\eqref{eq:gradeqf}}{=} \norm{z^{k-1} -w^*}_{\mW}^2  + 2\eta_k^2 f_{\mS} (w^k) - 2\eta_k \dotprod{w^k -w^* ,  \nabla_{\mW} f_{\mS} (w^k) }_{\mW} \\
	&\quad -2\eta_k \zeta_k\dotprod{ w^k-w^{k-1}, \nabla_{\mW} f_{\mS} (w^k) }_{\mW} \enspace.
\end{align*}
The above holds to equality. Now we introduce the first inequality by calling upon~\eqref{eq:convex} so that
\begin{eqnarray}
	\norm{z^k-w^*}_{\mW}^2  & \overset{\eqref{eq:convex}}{\leq}    &
	\norm{z^{k-1} -w^*}_{\mW}^2  + 2\eta_k^2 f_{\mS} (w^k)  -2\eta_k f_{\mS} (w^k)    -2\eta_k \zeta_k (f_{\mS} \left(w^k\right) - f_{\mS}(w^{k-1})) \nonumber \\
	& =&  \norm{z^{k-1} -w^*}_{\mW}^2  - 2\eta_k(1+\zeta_k-\eta_k) f_{\mS} (w^k)     +2\eta_k \zeta_k f_{\mS}(w^{k-1}) \enspace. \label{eq:ai8ha88ja4ap}
\end{eqnarray}
The restriction on the parameters in~\eqref{eq:zeta_kaptheo}  was designed so that
\begin{equation}\label{eq:soj9so9sj4ap}
	\eta_k(1+\zeta_k-\eta_k) \; = \; \eta_{k+1}\zeta_{k+1} \enspace.
\end{equation}
Indeed, from~\eqref{eq:zeta_kaptheo} we have that
\begin{align*}
	\zeta_{k+1} &= \frac{1}{\eta_{k+1}} \sum_{t=0}^{k} \eta_t (1- \eta_t) \;= \frac{\eta_k}{\eta_{k+1}}\frac{1}{\eta_k} \sum_{t=0}^{k} \eta_t (1- \eta_t) \;=\frac{\eta_k}{\eta_{k+1}} \left( \zeta_k +  1- \eta_k \right) \enspace.
\end{align*}
Using~\eqref{eq:soj9so9sj4ap} in~\eqref{eq:ai8ha88ja4ap} and taking expectation gives
\begin{eqnarray}
	\E{\norm{z^k-w^*}_{\mW}^2}+ 2\eta_{k+1}\zeta_{k+1} \E{f(w^k)} & \leq  & \E{ \norm{z^{k-1} -w^*}_{\mW}^2}    +2\eta_k \zeta_k \E{f(w^{k-1})} \enspace.
\end{eqnarray}
Summing up both sides from $k=0,\ldots, t$ and using telescopic cancellation gives
\begin{eqnarray}
	\E{\norm{z^t-w^*}_{\mW}^2}+ 2\eta_{t+1}\zeta_{t+1} \E{f(w^t)} & \leq  & \E{ \norm{z^{-1} -w^*}_{\mW}^2}    +2\eta_0 \zeta_0 \E{f(w^{-1})} \enspace.
\end{eqnarray}
Using that $z^{-1} = w^0$ from~\eqref{eq:iterateavap}, $\zeta_0 =0$ and re-arranging gives
\begin{eqnarray}
	\E{\norm{z^t-w^*}_{\mW}^2}+ 2\eta_{t+1}\zeta_{t+1} \E{f(w^t)} & \leq  & \E{ \norm{w^{0} -w^*}_{\mW}^2} \enspace.
\end{eqnarray}
Substituting the definition of $\zeta_{t+1}$ from~\eqref{eq:zeta_kaptheo} gives
\begin{eqnarray}
	2\E{f(w^t)} & \leq  & \frac{1}{ \eta_{t+1}\zeta_{t+1} } \E{ \norm{w^{0} -w^*}_{\mW}^2}   =  \frac{1}{ \sum_{k=0}^{t} \eta_k (1- \eta_k) } \E{ \norm{w^{0} -w^*}_{\mW}^2} \enspace.
\end{eqnarray}
The final step is a result of using~\eqref{eq:suboptres} to lower bound  $\E{f(w^t)}$.
\end{proof}

Theorem~\ref{thm:momap} shows that the convergences of the iterates $w^t$ depend on a sequence of parameters $\eta_k$. Next we give a corollary that shows that by simply choosing $\eta_k \equiv \eta  <1$ the iterates $w^t$ enjoy a fast sublinear convergence.

\begin{corollary}
	\label{cor:momconve}
Consider the setting of Theorem~\ref{thm:momap}. Let $\eta_k \equiv \eta <1$ and thus
\begin{equation}
	\zeta_0  = 0 \quad \mbox{and}\quad \zeta_k = k(1- \eta) \enspace, \quad \mbox{for all }k \geq 1 \enspace, \label{eq:zeta_formula_cst_eta}
\end{equation}
and
\begin{equation}
	\label{eq:cst_eta_lambda_to_gamma_beta_formula}
	\gamma_k =\frac{\eta}{(k+1)(1- \eta)+1}\quad \mbox{and}\quad \beta_k =1 - \frac{ 2-\eta}{(k+1)(1- \eta)+1} \enspace,
\end{equation}
then
\begin{eqnarray}
	\norm{\mA w^k -b}^2 & \leq  & \frac{1}{\lambda_{\min}\left(\E{\mH_{\mS}}\right)}  \frac{1}{  k} \frac{ \norm{w^{0} -w^*}_{\mW}^2}{ \eta (1- \eta) } \enspace.
\end{eqnarray}
Consequently,  for $\eta = \frac{1}{2}$ and for a given tolerance $\epsilon >0$, the iteration complexity of minimizing the residual is given by
	\begin{equation}
		\label{eq:complexsublinear}
		t \; \geq \; \frac{4\norm{w^0 - w^*}_{\mW}^2}{\lambda_{\min}\left(\E{\mH_{\mS}}\right)} \frac{1}{\epsilon} \enspace,
	\end{equation}
	to reach a desired precision
	\begin{equation*}
	\E{\norm{\mA w^t - b}^2}< \epsilon \enspace.
	\end{equation*}
\end{corollary}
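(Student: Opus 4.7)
The plan is to treat this purely as a specialization of \Cref{thm:momap}: every piece of the corollary (the closed forms for $\zeta_k$, $\gamma_k$, $\beta_k$, the residual bound, and the iteration complexity) should drop out by setting $\eta_k\equiv\eta$ and then doing bookkeeping.

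First I would verify \eqref{eq:zeta_formula_cst_eta}. Plugging $\eta_t=\eta$ into the defining recursion \eqref{eq:zeta_kaptheo} of \Cref{thm:momap} gives, for $k\geq 1$,
\[
\zeta_k \;=\; \frac{1}{\eta}\sum_{t=0}^{k-1}\eta(1-\eta) \;=\; k(1-\eta),
\]
and $\zeta_0=0$ by definition. Next, I would push these values through the parameter translation \eqref{eq:etalambdatogammbetagen}. For $\gamma_k$ this is immediate:
\[
\gamma_k \;=\; \frac{\eta}{\zeta_{k+1}+1} \;=\; \frac{\eta}{(k+1)(1-\eta)+1}.
\]
For $\beta_k$ I would compute $\beta_k=\zeta_k/(\zeta_{k+1}+1)=k(1-\eta)/\bigl((k+1)(1-\eta)+1\bigr)$ and then check algebraically that this equals $1-(2-\eta)/\bigl((k+1)(1-\eta)+1\bigr)$, by writing $k(1-\eta)=(k+1)(1-\eta)-(1-\eta)$ in the numerator and combining with the $+1$ in the denominator to produce $(2-\eta)$. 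This is the only nontrivial manipulation and it is purely arithmetic.

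With the parameters in hand, I would apply \eqref{eq:momconv} from \Cref{thm:momap}. With $\eta_t=\eta$ constant,
\[
\sum_{t=0}^{k}\eta_t(1-\eta_t)\;=\;(k+1)\,\eta(1-\eta)\;\geq\; k\,\eta(1-\eta),
\]
so \eqref{eq:momconv} yields the claimed bound
\[
\norm{\mA w^k-b}^2 \;\leq\; \frac{1}{\lambda_{\min}\!\left(\E{\mH_{\mS}}\right)}\,\frac{1}{k}\,\frac{\norm{w^0-w^*}_{\mW}^2}{\eta(1-\eta)}.
\]

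Finally, for the complexity statement I would plug in $\eta=1/2$, which maximizes $\eta(1-\eta)=1/4$ over $\eta\in(0,1)$, and so optimizes the constant in the residual bound. Substituting gives
\[
\E{\norm{\mA w^k-b}^2}\;\leq\;\frac{4\norm{w^0-w^*}_{\mW}^2}{\lambda_{\min}\!\left(\E{\mH_{\mS}}\right)\,k},
\]
and requiring the right-hand side to be at most $\epsilon$ and solving for $k$ yields \eqref{eq:complexsublinear}. There is no real obstacle here; the proof is entirely a direct substitution into \Cref{thm:momap} followed by the identity $k(1-\eta)=(k+1)(1-\eta)-(1-\eta)$ to match the form of $\beta_k$ stated in the corollary.
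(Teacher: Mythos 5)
Your proposal is correct and is exactly the argument the paper intends (the corollary is stated without an explicit proof, being a direct specialization of Theorem~\ref{thm:momap}): the closed forms for $\zeta_k,\gamma_k,\beta_k$ follow from \eqref{eq:zeta_kaptheo} and \eqref{eq:etalambdatogammbetagen}, the bound from $\sum_{t=0}^{k}\eta(1-\eta)=(k+1)\eta(1-\eta)\geq k\,\eta(1-\eta)$, and the complexity from $\eta(1-\eta)=1/4$ at $\eta=1/2$. Your algebraic check that $k(1-\eta)/\bigl((k+1)(1-\eta)+1\bigr)=1-(2-\eta)/\bigl((k+1)(1-\eta)+1\bigr)$ is the only step the paper leaves implicit, and it is right.
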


Corollary~\ref{cor:momconve} shows that the \emph{last iterate} of sketch-and-project with momentum converges  at the same rate as the average of the iterates of sketch-and-project (see Theorem~\ref{thm:residual_conv}). This is the first tangible theoretical advantage of using momentum in this setting.

In~\Cref{sec:specialconv}, through a more specialized setting for ridge regression, we show that the complexity given in~\eqref{eq:complexsublinear} can be even tighter than the previously known linear convergence in Theorem~\ref{thm:iterates_conv}.
But first, we present a practical pseudo-code implementation of sketch-and-project with momentum.

\subsection{Implementation}

In Algorithm~\ref{alg:momentum_algo} we have the pseudo-code of Sketch-and-Project with momentum~\eqref{eq:sgd_stepmom} for solving ridge regression~\eqref{eq:ridge}  where $\mW = \mA.$

\begin{algorithm}[!h]
	\begin{algorithmic}[1]
		\State \textbf{Parameters:} distribution over random $m \times \tau$ matrices in $\cD$, tolerance $\epsilon > 0$,
		\Statex momentum parameters $\eta_k, \zeta_k \in [0, 1]$
		\State Set $w^{-1} = w^0 = 0 \in \R^m$ \Comment weights initialization
		\State Set $r^{-1} = r^0 = \mA w^0 - b = - b \in \R^m$ \Comment residual initialization
		\State $k = 0$
		\While {$\norm{r^k}_2 / \norm{r^0}_2 \leq \epsilon$}
			\State $\gamma_k =\frac{\eta_k}{1 + \zeta_{k+1}}$  and   $\beta_k = \frac{\zeta_k}{1 + \zeta_{k+1}}$
			\State Sample an independent copy $\mS_k \sim \cD$
			\State $r_{\mS}^k = \mS_k^\top r^k$ \Comment compute sketched residual
			\State $\delta_k =$ \texttt{least\_norm\_solution} $\left( \mS_k^\top \mA \mS_k, r_{\mS}^k \right)$ \Comment solve sketched system
			\State $w^{k+1} = (1 + \beta_k) w^k - \beta_k w^{k-1} - \gamma_k \mS_k \delta_k$ \Comment update the iterates
			\State $r^{k+1} = (1 + \beta_k) r^k - \beta_k r^{k-1} - \gamma_k \mA \mS_k \delta_k$ \Comment update the residual
			\State $k = k + 1$
		\EndWhile
		\State \textbf{Output:} $w^{t}$ \Comment return weights vector
	\end{algorithmic}
	\caption{The Sketch-and-Project method with Momentum}
	\label{alg:momentum_algo}
\end{algorithm}

The residual $r^k \eqdef \norm{\mA w^k -b}^2$ is required  for computing the update like in Algorithm~\ref{alg:SP}. Fortunately we can efficiently compute the residual at step $k+1$  by storing the residuals at two steps $k-1$ and $k$ since
\begin{eqnarray}
	r^{k+1} &=& \mA w^{k+1} - b \nonumber \\
	&\overset{\eqref{eq:sgd_stepmom}}{=}& \mA ((1 + \beta_k) w^k - \beta_k w^{k-1} - \gamma_k 	 \nabla_{\mW} f_{\mS} (w^k)) - b \nonumber \\
		&\overset{\eqref{eq:grad}}{=}& \mA ((1 + \beta_k) w^k - \beta_k w^{k-1} - \gamma_k \mS_k \delta_k) - b \nonumber \\
	&\overset{\eqref{eq:grad}}{=} &(1 + \beta_k) r^k - \beta_k r^{k-1} - \gamma_k \mA \mS_k \delta_k \enspace, \label{eq:res_update_mom}
\end{eqnarray}
where
\[\delta_k = \left( \mS_k^\top \mA \mS_k \right)^{\dagger} \mS_k^\top r^k = \texttt{least\_norm\_solution} \left( \mS_k^\top \mA \mS_k, r_{\mS}^k \right) \enspace,\]
and where we used that since $\mW = \mA$ we have that
\[ \nabla_{\mW} f_{\mS} (w^k) \;\overset{\eqref{eq:grad}}{=}\; \mW^{-1} \mA  \mS (\mS^\top \mA \mW^{-1} \mA \mS)^{\dagger} \mS^\top \left(\mA w^k - b\right)\; =\; \mS_k \left( \mS_k^\top \mA \mS_k \right)^{\dagger} \mS_k^\top r^k \enspace.\]
Thus, for the momentum version of our algorithm we can keep the residual update at the cost of $O(m \tau)$ by just storing the residual  $r^{k-1}$  at the previous time step.

\section{Specialized convergence theory for single column sketches}
\label{sec:specialconv}

Here we take a closer look at the rates of convergence given by Theorem~\ref{thm:iterates_conv} and Corollary~\ref{cor:momconve} by considering a specialized setting of ridge regression ($\mW = \mA$) and single column sketches.
That is, in this section we use a discrete distribution for $\mS$ given by
\begin{equation}
	\label{eq:mSdisc}
	\Prob{\mS = s_i} \; = \; p_i, \quad \mbox{for } i=1, \ldots, m \enspace,
\end{equation}
where $s_1, \ldots, s_m \in \R^{m}$ are a fixed collection of vectors and  $\sum_{i=1}^q p_i =1$.

To better understand the complexity~\eqref{eq:complexsublinear} and~\eqref{eq:complexL2} , we first need to find a lower bound for $\lambda_{\min}\left(\E{\mH_{\mS}}\right)$ where
\begin{equation}
	\label{eq:EHSWA}
	\E{\mH_{\mS}} \;= \;\E{\mS (\mS^\top \mA \mS)^{\dagger} \mS^\top} \enspace.
\end{equation}
By using a special choice for the probabilities in~\eqref{eq:mSdisc} , we are able to give a convenient lower bound for $\E{\mH_{\mS}}$ in the following lemma.

\begin{lemma}
	\label{lem:singlecolbnd}
	Let $\mS$  have a discrete distribution according to
	\begin{equation}
		\label{eq:mSdisccol}
		\Prob{\mS = s_i} \; = \; \frac{s_i^\top \mA s_i}{\sum_{j=1}^q s_j^\top \mA s_j} \enspace, \quad \mbox{for }i=1,\ldots, q \enspace,
	\end{equation}
	where $s_1, \ldots, s_q\in \R^m$ are unit column vectors.  Let $\mF \eqdef [s_1, \ldots, s_q] \in \R^{m \times q}.$ It follows that
	\begin{equation}
		\label{eq:lambdaEHbndcol}
		\lambda_{\min}(\E{\mH_{\mS}}) \; = \; \frac{ \lambda_{\min} (\mF\mF^\top)}{\sum_{j=1}^m s_j^\top \mA s_j} \enspace.
	\end{equation}
\end{lemma}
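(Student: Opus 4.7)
The plan is essentially a direct computation that exploits two facts: (i) $\mS$ is a single column, so the pseudoinverse in the definition of $\mH_{\mS}$ collapses to a scalar reciprocal, and (ii) the probabilities in~\eqref{eq:mSdisccol} are tailored so that their numerators $s_i^\top \mA s_i$ cancel exactly with the denominators that appear inside $\mH_{s_i}$.

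First, I would unpack $\mH_{\mS}$ when $\mS = s_i \in \R^m$. Since $\mA$ is symmetric positive definite and $s_i$ is a unit vector, the scalar $s_i^\top \mA s_i$ is strictly positive, so the pseudoinverse $(s_i^\top \mA s_i)^\dagger$ is just $1/(s_i^\top \mA s_i)$. From the definition~\eqref{eq:HS} with $\mW = \mA$ and the fact that in this case $\mH_{\mS} = \mS(\mS^\top \mA \mS)^\dagger \mS^\top$ as used in~\eqref{eq:EHSWA}, this gives
\begin{equation*}
\mH_{s_i} \;=\; \frac{s_i s_i^\top}{s_i^\top \mA s_i} \enspace.
\end{equation*}

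Second, I would take the expectation under the distribution~\eqref{eq:mSdisccol}. Writing $Z \eqdef \sum_{j=1}^q s_j^\top \mA s_j$, the probability $p_i = (s_i^\top \mA s_i)/Z$ cancels exactly with the denominator of $\mH_{s_i}$, yielding the telescoping simplification
\begin{equation*}
\E{\mH_{\mS}} \;=\; \sum_{i=1}^q p_i \, \frac{s_i s_i^\top}{s_i^\top \mA s_i} \;=\; \frac{1}{Z}\sum_{i=1}^q s_i s_i^\top \;=\; \frac{\mF \mF^\top}{Z} \enspace,
\end{equation*}
where the last equality is just the outer-product representation $\mF \mF^\top = \sum_{i=1}^q s_i s_i^\top$ for $\mF = [s_1,\ldots,s_q]$.

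Third, I would pull the positive scalar $1/Z$ out of $\lambda_{\min}(\cdot)$ to conclude that $\lambda_{\min}(\E{\mH_{\mS}}) = \lambda_{\min}(\mF \mF^\top)/Z$, matching~\eqref{eq:lambdaEHbndcol}. There is no real obstacle here — the only subtle point worth a line of justification is that the denominators are strictly positive (so the pseudoinverse really is an ordinary inverse and no degeneracy arises); everything else is a one-line cancellation enabled by the choice~\eqref{eq:mSdisccol} of probabilities. I would also flag the apparent typo in the statement (the denominator sum should run to $q$, matching the support of $\mS$, not to $m$), since this is evident from the computation.
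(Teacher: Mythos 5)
Your proof is correct and follows essentially the same route as the paper: collapse the pseudoinverse to a scalar reciprocal, cancel the $s_i^\top \mA s_i$ factors against the chosen probabilities to get $\E{\mH_{\mS}} = \mF\mF^\top / \sum_{j=1}^q s_j^\top \mA s_j$, and pull out the positive scalar from $\lambda_{\min}(\cdot)$. Your observation that the denominator in~\eqref{eq:lambdaEHbndcol} should run to $q$ rather than $m$ is also correct --- the paper's own proof carries the same index inconsistency.
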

\begin{proof}
This \emph{convenient} probability distribution~\eqref{eq:mSdisccol} was already considered in~\cite{Gower2015} in Section 5.2.
But there in, the authors used these probabilities to study a different spectral quantity, thus for completion we adapt their proof to our setting.
First note that
	\begin{eqnarray*}
	\E{\mH_{\mS}} & \overset{\eqref{eq:EHSWA}}{=} & \sum_{i=1}^q \frac{p_i}{s_i^\top \mA s_i } s_i s_i^\top
	\;  \overset{\eqref{eq:mSdisccol}}{=}  \; \frac{\sum_{i=1}^q s_i s_i^\top}{\sum_{j=1}^q s_j^\top \mA s_j} \enspace.
	\end{eqnarray*}
	Consequently, the smallest eigenvalue is given by
	\[		\lambda_{\min}(\E{\mH_{\mS}}) \; = \; 		\lambda_{\min}\left(\frac{\sum_{i=1}^q s_i s_i^\top}{\sum_{j=1}^q s_j^\top \mA s_j} \right) \; = \; \frac{ \lambda_{\min} (\mF\mF^\top)}{\sum_{j=1}^m s_j^\top \mA s_j} \enspace.\]
\end{proof}

\subsection{Comparing the complexity of CD with and without Momentum}

Here we compare the fast sublinear convergence of  \texttt{RidgeSketch} with momentum given in Theorem~\eqref{thm:momap}  to the linear convergence of \texttt{RidgeSketch} without momentum given in  Theorem~\ref{thm:iterates_conv}.
Though linear convergence is generally preferred, we will show here that our new sublinear rate of convergence can be faster.
To illustrate this, we will focus on the special case of Coordinate Descent (CD).

The CD method is the result of applying \texttt{RidgeSketch} when the sketching matrices are unit coordinate vectors.
That is, when $s_i = e_i \in \R^{m}$ is the $i$-th column of the identity matrix and
\begin{equation}
	\label{eq:mSdisccolCD}
	\Prob{s_i = e_i} \; = \; \frac{\mA_{ii}}{\trace{ \mA }} \enspace, \quad \mbox{for }i=1,\ldots, m  \enspace.
\end{equation}
This particular nonuniform sampling in~\eqref{eq:mSdisccolCD} was first given in~\cite{Leventhal2010}. With this sketch, the \texttt{RidgeSketch} method with momentum~\eqref{eq:sgd_stepmom} becomes CD with momentum which is given by
\begin{equation} \label{eq:CDmom}
	w^{k+1} = w^k -  \gamma_k \frac{\mA_{i:} w^k-b_i}{\mA_{ii}} e_i + \beta_k (w^k-w^{k-1}) \enspace.
\end{equation}

The CD method with constant momentum $\beta_k \equiv \beta$ is known to converge linearly~\cite{Leventhal2010,Loizou2020mom}. But the linear convergence of CD with momentum is always slower than CD without momentum, see Theorem 1 in~\cite{Loizou2020mom}.
Here we present the first convergence rate of CD with momentum that can be faster than CD without momentum. But first, we need the following corollary.

\begin{corollary}
	\label{cor:complexCD}
	Let $\epsilon >0$.
	If we set the momentum parameters $\gamma_k$ and $\beta_k$ according to~\eqref{eq:cst_eta_lambda_to_gamma_beta_formula} with $\eta \equiv 0.5$ then the iterates of CD with momentum~\eqref{eq:CDmom} satisfy
	\begin{equation}
		\label{eq:complexsublinearCD}
		t \geq 	4\frac{\trace{\mA}}{\epsilon}  \; \implies \; \frac{\E{\norm{\mA w^t - b}^2} }{\norm{w^0 -w^*}_{\mA}^2} < \epsilon  \enspace. 
	\end{equation}

	Alternatively, if we use no momentum (setting $\gamma_k=1$ and $\beta_k =0$) then the iterates~\eqref{eq:CDmom} satisfy
	\begin{equation}
		\label{eq:linearcomplexCD}
		t \; \geq \;	 \frac{\trace{\mA}}{\lambda_{\min}(\mA)} \log\left(\frac{\lambda_{\max}(\mA)}{\epsilon}\right)  \; \implies \; \frac{\E{\norm{\mA w^t - b}^2} }{\norm{w^0 -w^*}_{\mA}^2} < \epsilon \enspace .
	\end{equation}
\end{corollary}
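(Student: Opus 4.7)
The plan is to specialize Corollary~\ref{cor:momconve} and Theorem~\ref{thm:iterates_conv} to the CD sketch, by first computing the only two problem-dependent quantities that appear in those bounds: $\lambda_{\min}(\E{\mH_{\mS}})$ and (for the no-momentum case) the rate $\rho$. Everything else follows by plugging in and rearranging.

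For the momentum bound~\eqref{eq:complexsublinearCD}, I would first observe that with $s_i = e_i$ the CD sampling~\eqref{eq:mSdisccolCD} is exactly the \emph{convenient} probability distribution~\eqref{eq:mSdisccol} required by Lemma~\ref{lem:singlecolbnd}. Plugging $\mF = [e_1,\ldots,e_m] = \mI$ and $\sum_j e_j^\top \mA e_j = \trace{\mA}$ into~\eqref{eq:lambdaEHbndcol} yields $\lambda_{\min}(\E{\mH_{\mS}}) = 1/\trace{\mA}$. Next I would apply Corollary~\ref{cor:momconve} with $\eta = 1/2$, for which $\eta(1-\eta) = 1/4$, giving
\begin{equation*}
\E{\norm{\mA w^t - b}^2} \;\leq\; \trace{\mA}\cdot \frac{1}{t}\cdot \frac{\norm{w^0-w^*}_{\mA}^2}{1/4} \;=\; \frac{4\,\trace{\mA}}{t}\,\norm{w^0-w^*}_{\mA}^2,
\end{equation*}
where I used $\mW = \mA$. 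Dividing by $\norm{w^0-w^*}_{\mA}^2$ and demanding the right-hand side be below $\epsilon$ gives $t \geq 4\trace{\mA}/\epsilon$, which is~\eqref{eq:complexsublinearCD}.

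For the linear bound~\eqref{eq:linearcomplexCD}, I would start from Theorem~\ref{thm:iterates_conv}, which with $\mW = \mA$ uses the rate $\rho = \lambda_{\min}^+\bigl(\mA^{1/2}\E{\mH_{\mS}}\mA^{1/2}\bigr)$ (note that here $\mH_{\mS} = \mS(\mS^\top\mA\mS)^{\dagger}\mS^\top$ since $\mW = \mA$). Computing directly for the CD sketch,
\begin{equation*}
\E{\mH_{\mS}} \;=\; \sum_{i=1}^m \frac{\mA_{ii}}{\trace{\mA}}\cdot \frac{e_i e_i^\top}{\mA_{ii}} \;=\; \frac{1}{\trace{\mA}}\,\mI,
\end{equation*}
so $\rho = \lambda_{\min}(\mA)/\trace{\mA}$. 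Inserting this into the residual bound~\eqref{eq:L2convres} yields $\E{\norm{\mA w^t - b}^2} \leq (1-\rho)^t \lambda_{\max}(\mA)\norm{w^0-w^*}_{\mA}^2$. Using the standard inequality $(1-\rho)^t \leq e^{-t\rho}$, it suffices for the right-hand side (normalized by $\norm{w^0-w^*}_{\mA}^2$) to be below $\epsilon$ that $t \geq \rho^{-1}\log(\lambda_{\max}(\mA)/\epsilon)$, which upon substituting the value of $\rho$ gives exactly~\eqref{eq:linearcomplexCD}.

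I do not anticipate any real obstacle: both steps reduce to a one-line eigenvalue computation for $\E{\mH_{\mS}}$ followed by plugging the result into the abstract theorems already proved. The only minor care needed is to verify that the probabilities~\eqref{eq:mSdisccolCD} coincide with the convenient sampling~\eqref{eq:mSdisccol} (they do, since $s_i = e_i$ are unit vectors and $e_i^\top \mA e_i = \mA_{ii}$), and to use the scalar identity $\mS^\top \mA \mS = \mA_{ii}$ so that $(\mS^\top \mA \mS)^\dagger = 1/\mA_{ii}$.
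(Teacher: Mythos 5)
Your proposal is correct and follows essentially the same route as the paper: both compute $\lambda_{\min}(\E{\mH_{\mS}}) = 1/\trace{\mA}$ via Lemma~\ref{lem:singlecolbnd} with $\mF=\mI$ (equivalently, the direct computation $\E{\mH_{\mS}}=\mI/\trace{\mA}$), then plug this into Corollary~\ref{cor:momconve} for the momentum bound and into Theorem~\ref{thm:iterates_conv} with $\rho=\lambda_{\min}(\mA)/\trace{\mA}$ for the no-momentum bound. Your write-up just makes explicit the ``standard manipulations of logarithm'' ($(1-\rho)^t\leq e^{-t\rho}$) that the paper leaves implicit.
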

\begin{proof}
Consider the sampling given by~\eqref{eq:mSdisccolCD}.
From~\eqref{eq:lambdaEHbndcol}, since $\mF = \mI_m$, we have that
\begin{equation}
	\label{eq:lambdaEHbndCD}
	\lambda_{\min}(\E{\mH_{\mS}}) \; \geq \; \frac{ 1}{\trace{ \mA }} \enspace.
\end{equation}
Consequently, using~\eqref{eq:lambdaEHbndCD} together with the complexity bound~\eqref{eq:complexsublinear} and setting $\mW = \mA$ we have~\eqref{eq:complexsublinearCD}.

The linear complexity~\eqref{eq:linearcomplexCD} follows from a special case of Theorem~\ref{thm:iterates_conv}. Indeed, by using~\eqref{eq:lambdaEHbndCD} we have that
\[\rho = \frac{\lambda_{\min}(\mA)}{\trace{ \mA }} \enspace.\]
The resulting complexity~\eqref{eq:linearcomplexCD} follows by standard manipulations of logarithm.
\end{proof}

This linear convergence~\eqref{eq:linearcomplexCD} is generally preferred because of the resulting logarithmic dependency of $\epsilon.$ But, as we show next, the sublinear complexity given in~\eqref{eq:complexsublinearCD} can be tighter when $\epsilon$ is not too small.

\begin{corollary}[Domain of superiority of the sublinear over the linear convergence]
	\label{cor:complexCDcompare}
	Consider the setting of Corollary~\ref{cor:complexCDcompare}. Let $\hat{\epsilon} \; \eqdef \; \frac{\epsilon}{\lambda_{\max}(\mA)}$ be the \emph{scaled precision} and let use denote $\kappa  \eqdef \frac{\lambda_{\max}(\mA)}{\lambda_{\min}(\mA)}$ the \emph{condition number}.
	If
	\begin{equation}
		\label{eq:complexCDcompare}
		\hat{\epsilon}(1 -\hat{\epsilon}) \; \geq \; \frac{4}{\kappa} \enspace,
	\end{equation}
	then the complexity bound of momentum~\eqref{eq:complexsublinearCD} is tighter than the bound in~\eqref{eq:linearcomplexCD}.

Furthermore, if $\kappa \geq 16,$ the solutions to~\eqref{eq:complexCDcompare} in $\hat{\epsilon}$ are given by
\begin{equation}
	\label{eq:bounds_scaled_eps}
	\frac{1}{2} - \frac{1}{2}\sqrt{1-\frac{16}{\kappa}} \leq  \hat{\epsilon} \leq \frac{1}{2} + \frac{1}{2}\sqrt{1-\frac{16}{\kappa}} \enspace.
\end{equation}
\end{corollary}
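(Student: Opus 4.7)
The plan is to start from the two complexity bounds in Corollary~\ref{cor:complexCD} and identify when the momentum bound is smaller. Writing $T_{\text{mom}} = 4\trace{\mA}/\epsilon$ and $T_{\text{lin}} = (\trace{\mA}/\lambda_{\min}(\mA))\log(\lambda_{\max}(\mA)/\epsilon)$, the inequality $T_{\text{mom}} \leq T_{\text{lin}}$ simplifies after dividing by $\trace{\mA}$ and multiplying by $\lambda_{\min}(\mA)$ to $4\lambda_{\min}(\mA)/\epsilon \leq \log(\lambda_{\max}(\mA)/\epsilon)$. I would then introduce the scaled precision $\hat{\epsilon} = \epsilon/\lambda_{\max}(\mA)$ and condition number $\kappa = \lambda_{\max}(\mA)/\lambda_{\min}(\mA)$, which turns the inequality into $4/(\hat{\epsilon}\kappa) \leq \log(1/\hat{\epsilon})$, equivalently $\hat{\epsilon}\log(1/\hat{\epsilon}) \geq 4/\kappa$.

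The next step is to replace the transcendental condition by the polynomial sufficient condition~\eqref{eq:complexCDcompare}. This is done with the elementary inequality $\log(1/u) \geq 1-u$ for $u \in (0,1)$ (which follows from concavity of $\log$, since the tangent to $\log$ at $u=1$ lies above the curve). Thus $\hat{\epsilon}\log(1/\hat{\epsilon}) \geq \hat{\epsilon}(1-\hat{\epsilon})$, so assuming $\hat{\epsilon}(1-\hat{\epsilon}) \geq 4/\kappa$ is enough to guarantee that the momentum complexity is smaller than the non-momentum one. This establishes the first claim of the corollary.

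For the second claim, I would simply solve the quadratic inequality $\hat{\epsilon}(1-\hat{\epsilon}) \geq 4/\kappa$ in $\hat{\epsilon}$. Rewriting it as $\hat{\epsilon}^2 - \hat{\epsilon} + 4/\kappa \leq 0$, the quadratic formula gives roots
\begin{equation*}
\hat{\epsilon}_\pm = \frac{1}{2} \pm \frac{1}{2}\sqrt{1 - \frac{16}{\kappa}} \enspace.
\end{equation*}
The discriminant $1 - 16/\kappa$ is nonnegative precisely when $\kappa \geq 16$, in which case the parabola (opening upward) sits below zero exactly on $[\hat{\epsilon}_-, \hat{\epsilon}_+]$, yielding the bounds~\eqref{eq:bounds_scaled_eps}.

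No step is really an obstacle here: the only non-mechanical move is recognizing that $1-\hat{\epsilon}$ is a convenient lower bound for $\log(1/\hat{\epsilon})$ on $(0,1)$, which turns a transcendental comparison into a quadratic one that can be explicitly inverted. The rest is algebraic bookkeeping.
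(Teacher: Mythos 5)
Your proof is correct and follows essentially the same route as the paper: reduce the comparison of the two iteration counts to $\hat{\epsilon}\log(1/\hat{\epsilon}) \geq 4/\kappa$, then lower-bound $\log(1/\hat{\epsilon})$ by $1-\hat{\epsilon}$ (the paper phrases this as $\tfrac{1}{1-x}\log(1/x)\geq 1$, which is the same inequality), and finally solve the resulting quadratic. The only difference is that you spell out the quadratic-formula step for~\eqref{eq:bounds_scaled_eps}, which the paper leaves implicit.
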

\begin{proof}
The complexity bound in~\eqref{eq:complexsublinearCD} is tighter than the bound in~\eqref{eq:linearcomplexCD} if
\[ \frac{\trace{\mA}}{\lambda_{\min}(\mA)} \log\left(  \frac{\lambda_{\max}(\mA)}{\epsilon}\right)  \;  \geq  \; 4\frac{\trace{\mA}}{\epsilon} \enspace. \]
Substituting $ \hat{\epsilon} \; \eqdef \; \frac{\epsilon}{\lambda_{\max}(\mA)}$ and re-arranging the above gives
\begin{equation}
	\label{eq:tempmaniuaeopja}
	\hat{\epsilon}\log\left(\frac{1}{ \hat{\epsilon}}\right) \; \geq \; \frac{4}{\kappa} \enspace.
\end{equation}
To further bound the above we use the following standard logarithm bound
$$ \frac{1}{1-x} \log\left(\frac{1}{x}\right)  \geq 1 \enspace, \quad \forall x \in (0,\; 1)\enspace,$$
	which after manipulations gives
$$x \log\left(\frac{1}{x}\right)  \geq x(1 -x) \enspace.$$
Assuming that $\hat{\epsilon} = \epsilon / \lambda_{\max}(\mA) \leq 1$ and using this  bound with $x =\hat{\epsilon}$ we have that if~\eqref{eq:complexCDcompare} holds then
$$  \hat{\epsilon}\log\left(\frac{1}{\hat{\epsilon}}\right) \; \geq \; \hat{\epsilon}(1 -\hat{\epsilon}) \; \geq \; \frac{4}{\kappa} \enspace.$$
Thus~\eqref{eq:tempmaniuaeopja} holds.
\end{proof}

Using Corollary~\ref{cor:complexCDcompare} we can deduce several regimes where the sublinear momentum bound in~\eqref{eq:complexsublinearCD} is tighter than the bound in~\eqref{eq:linearcomplexCD}. As illustrated in~\Cref{fig:superiority_sublinear_over_linear}, when the condition number is moderate to large or the scaled precision is moderate, the sublinear bound~\eqref{eq:complexsublinearCD} is often tighter.
On the other hand, when $\hat{\epsilon}$ is very small, then linear rates such as~\eqref{eq:linearcomplexCD} are generally preferred.

The regime where sketch-and-projecting methods are interesting is when $\epsilon$ is moderate, and $\mA$ has large dimensions. Furthermore, in the large dimensional setting, the condition number of $\mA$ can also be very large, thus~\eqref{eq:complexCDcompare} is likely to hold.

\section{RidgeSketch momentum experiments}
\label{sec:mom_exp}

Our first experiment explores the efficiency of the momentum version of our \texttt{RidgeSketch} method.
We then compare the different sketches described in~\Cref{sec:sketch_matrices}.
Finally, we prove the efficiency of our method on large scale real datasets and show it is competitive with CG and direct solvers.

\paragraph{Datasets.} In what follows, we test our algorithms on the datasets with different number of data samples $n$ and number of features $d$: \texttt{California Housing} ($n=20,640$, $d=8$), \texttt{Boston} ($n=506$, $d=13$), \texttt{RCV1} ($n=804,414$, $d=47,236$) fetched from \texttt{sklearn}\footnote{\url{https://scikit-learn.org/stable/modules/generated/sklearn.datasets}}~\cite{scikit-learn} and \texttt{Year Prediction MSD} ($n=515,345$, $d=90$) from the UCI repository\footnote{\url{https://archive.ics.uci.edu/ml/datasets/yearpredictionmsd}}.
We converted \texttt{RCV1} into a regression task by transforming multi-class labels into integers.

\subsection{Experiment 1: Comparison of different momentum settings}
\label{sec:exp_9.2}

In this section, we compare the sketch-and-project method with momentum for three settings: our new iteration dependent parameters given by~\eqref{eq:etalambdatogammbetagen}, the constant $\beta$ setting proposed in~\cite{Loizou2020mom} ($\gamma_k = 1$, $\beta_k = 0.5$) and no momentum at all ($\gamma_k = 1$, $\beta_k = 0$).
We report iteration plots since the sketch-and-project methods with or without momentum have almost the same iteration cost.
Indeed in either case, this cost is dominated by sketching $\mA$, computing the residual $r^k$ and solving the sketched system~\eqref{eq:sketched_system}.
We report error areas (1st and 3rd quartiles) computed over $10$ runs each.

\paragraph{Increasing momentum.}
As suggested by~\Cref{thm:momap}, our momentum Algorithm~\ref{alg:momentum_algo} requires choosing the sequence $\eta_k$, after which $\gamma_k$ and $\beta_k$ are set using~\eqref{eq:zeta_kaptheo} and~\eqref{eq:etalambdatogammbetagen}.
After running several benchmarks tests, we identified the following \emph{theoretical} rule for setting $\eta_k$
\begin{equation}
	\label{eq:parameterswitchtheory}
	\eta_k \; = \;
	\begin{cases}
	0.995 & \mbox{if } \beta_k <0.5 \\
	1 &\mbox{if } \beta_k \geq 0.5
	\end{cases} \enspace .
\end{equation}
We call this parameter choice increasing momentum as it allows $\beta_k$ to increase from $0$ to $0.5$ while the step size $\gamma_k$ decreases from $1$ to $0.5$, as showed in~\Cref{fig:eta_cst_then_1}.
\begin{figure}
    \centering
    \begin{subfigure}{0.35\textwidth}
		\includegraphics[width=\textwidth]{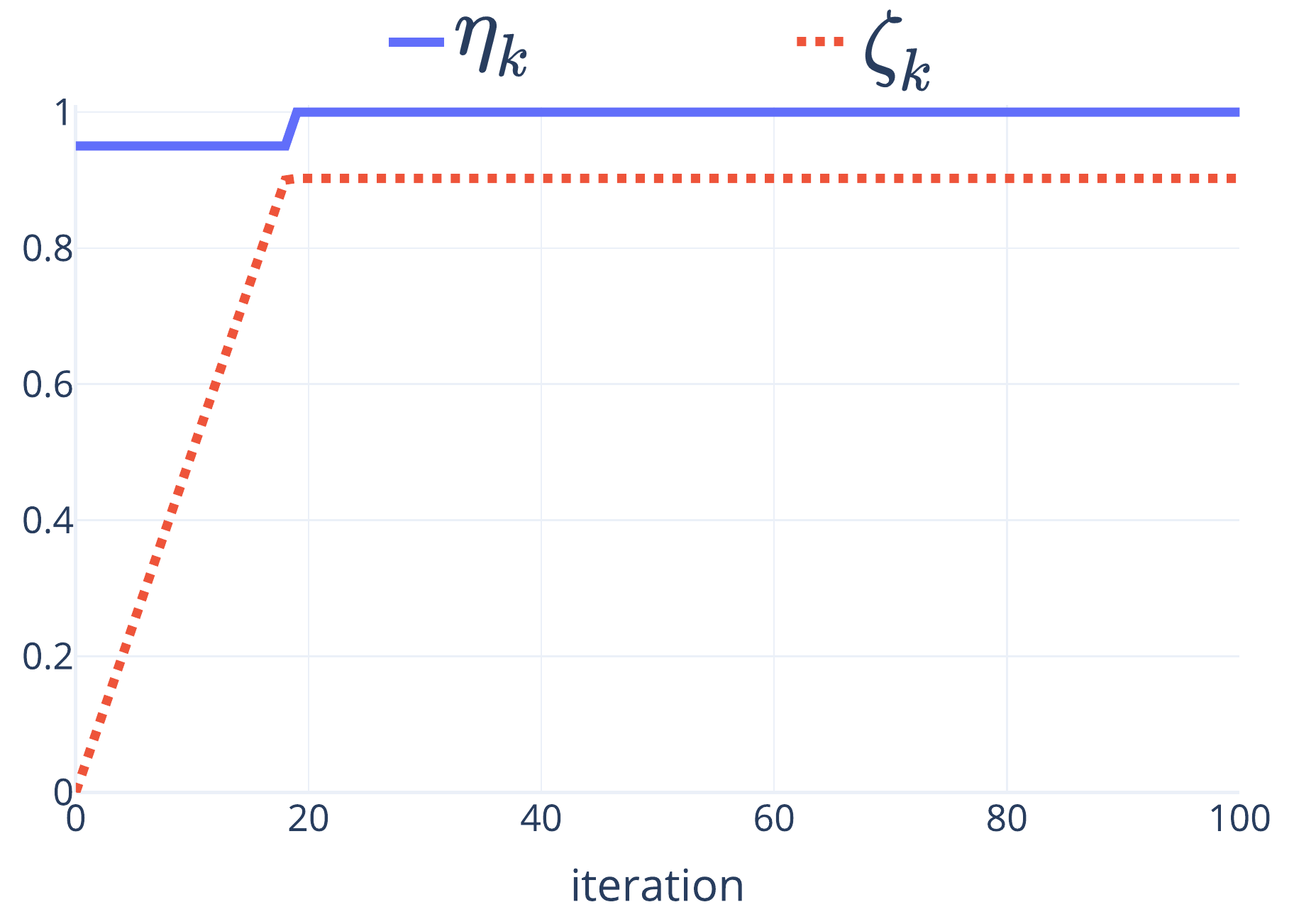}
		\caption{Parameters $\eta_k$ and $\zeta_k$}
    \end{subfigure}
    \hspace{1cm}
    \begin{subfigure}{0.35\textwidth}
		\includegraphics[width=\textwidth]{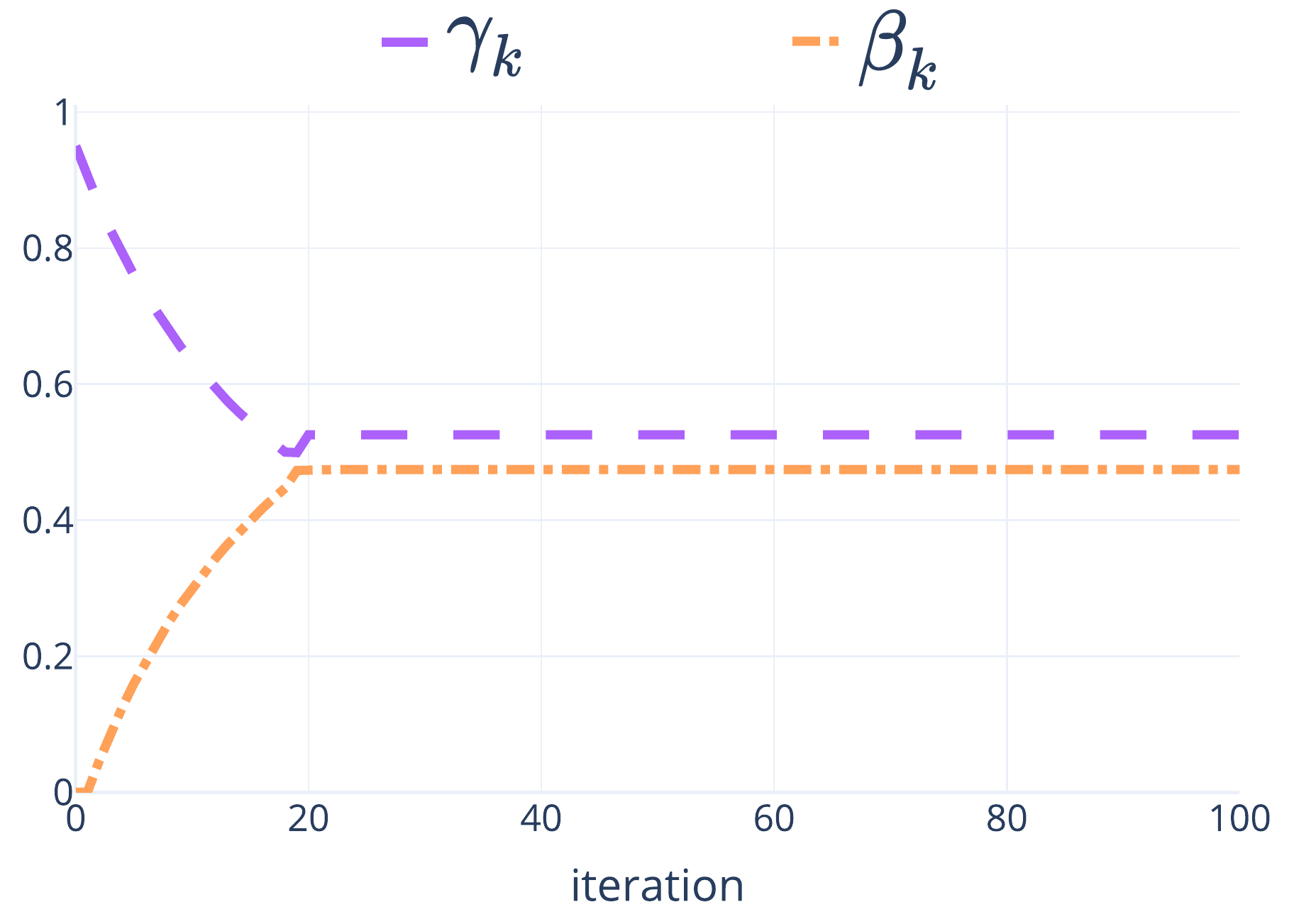}
        \caption{Step size and momentum parameter}
    \end{subfigure}
	\caption{Momentum parameters for the theoretical rule: $\eta_k \equiv 0.5$ while $\beta_k < 0.5$, then $\eta_k \equiv 1$.}
    \label{fig:eta_cst_then_1}
\end{figure}
Yet, we found in our experiments that this theoretical setting~\eqref{eq:parameterswitchtheory} closely matches the version without momentum of~\Cref{alg:SP}.
We suppose that the gain of the increasing momentum is lost by an excessively rapid drop of the step size to $0$.
This is why we introduce the following \emph{heuristic} setting that keeps the step size to $1$ and still uses the theoretical setting for momentum when $\eta_k \equiv \eta$ given by~\eqref{eq:cst_eta_lambda_to_gamma_beta_formula}, that is
\begin{equation}
	\label{eq:parametersheurististic}
	\gamma_k \equiv 1 \quad \mbox{and}\quad \beta_k =1 - \frac{ 2-\eta}{(k+1)(1- \eta)+1} \enspace.
\end{equation}
We tested it for increasing momentum on the \texttt{Boston} and \texttt{RCV1} datasets with different sketches and sketch sizes, see~\Cref{fig:effect_reg_on_mom_boston,fig:effect_sketch_size_mom_cali_k}.
We found that this heuristic setting~\eqref{eq:parametersheurististic} had the \emph{best of both worlds}, in that in the first iterations, when $\gamma_k \approx 1$ and $\beta_k \approx 0$, it benefits from the fast initial decrease of the no-momentum version.
Then, in later iterations, it exploits the fast asymptotic convergence of momentum since $\beta_k \approx 0.5$.

\paragraph{Regularization test.} Using the heuristic setting~\eqref{eq:parametersheurististic}, we tested the impact of using a small, medium and larger regularization parameter $\lambda$ on the performance of momentum, see~\Cref{fig:effect_reg_on_mom_boston}.
In this figure, we can see  that constant momentum is less effective as $\lambda$ increases, and the no momentum variant is more effective when $\lambda$ is small. Moreover, we observe the robustness of our heuristic increasing momentum since it performs well for all regularizers.

\begin{figure}
    \centering
	\begin{subfigure}{0.3\textwidth}
		\includegraphics[width=\textwidth]{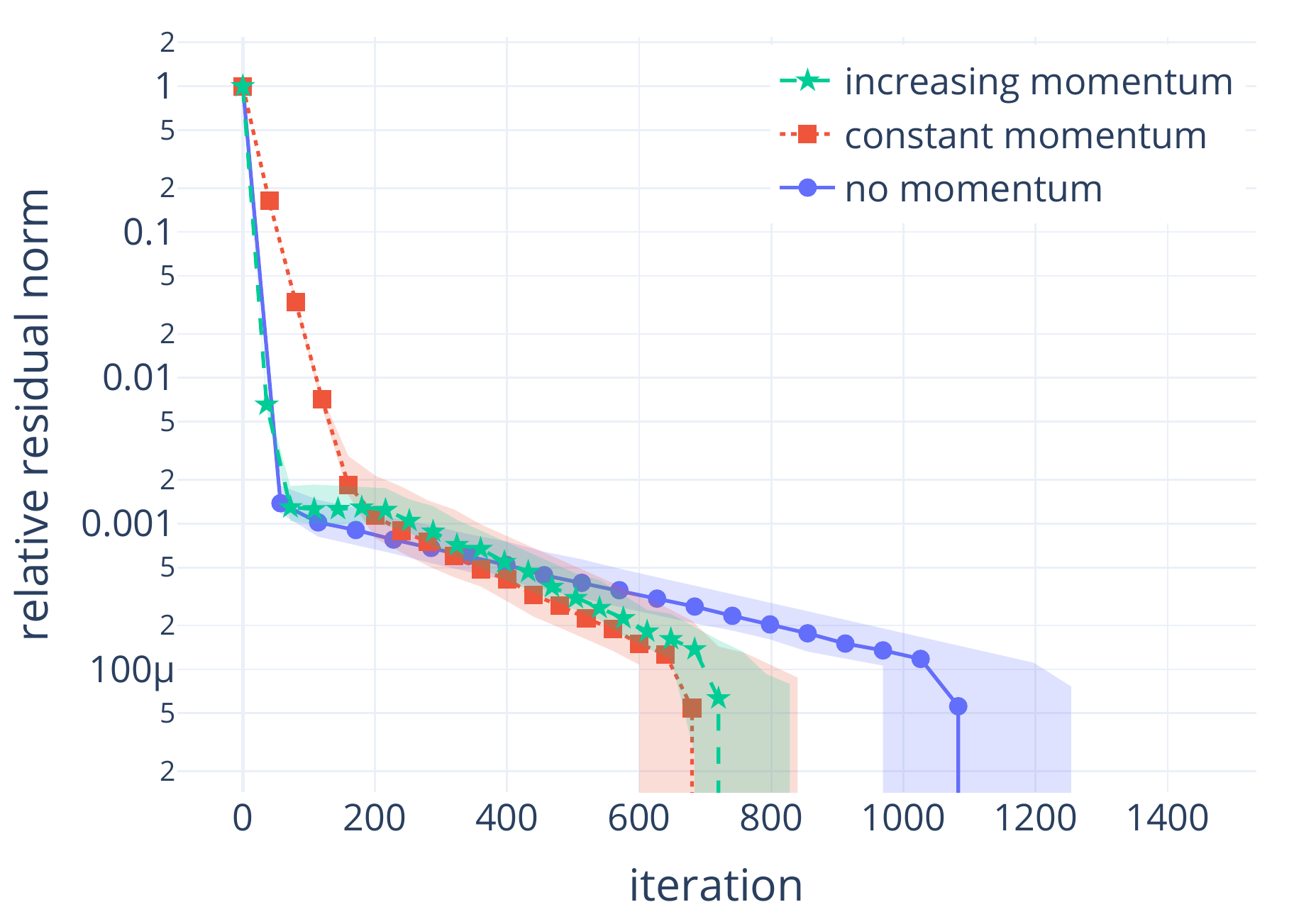}
        \caption{$\lambda = 0$}
    \end{subfigure}
    \begin{subfigure}{0.3\textwidth}
		\includegraphics[width=\textwidth]{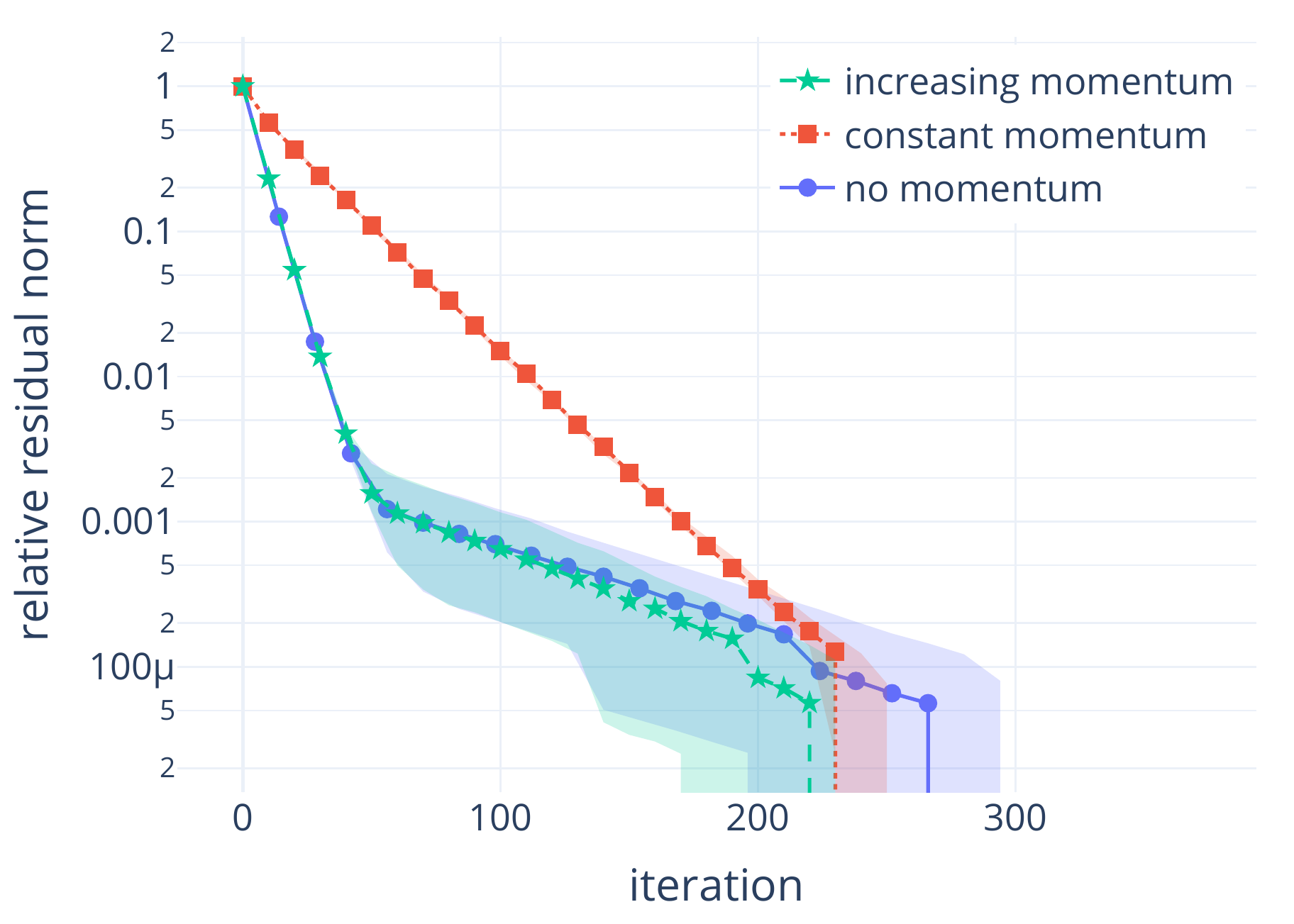}
        \caption{$\lambda = 0.7$}
    \end{subfigure}
	\begin{subfigure}{0.3\textwidth}
		\includegraphics[width=\textwidth]{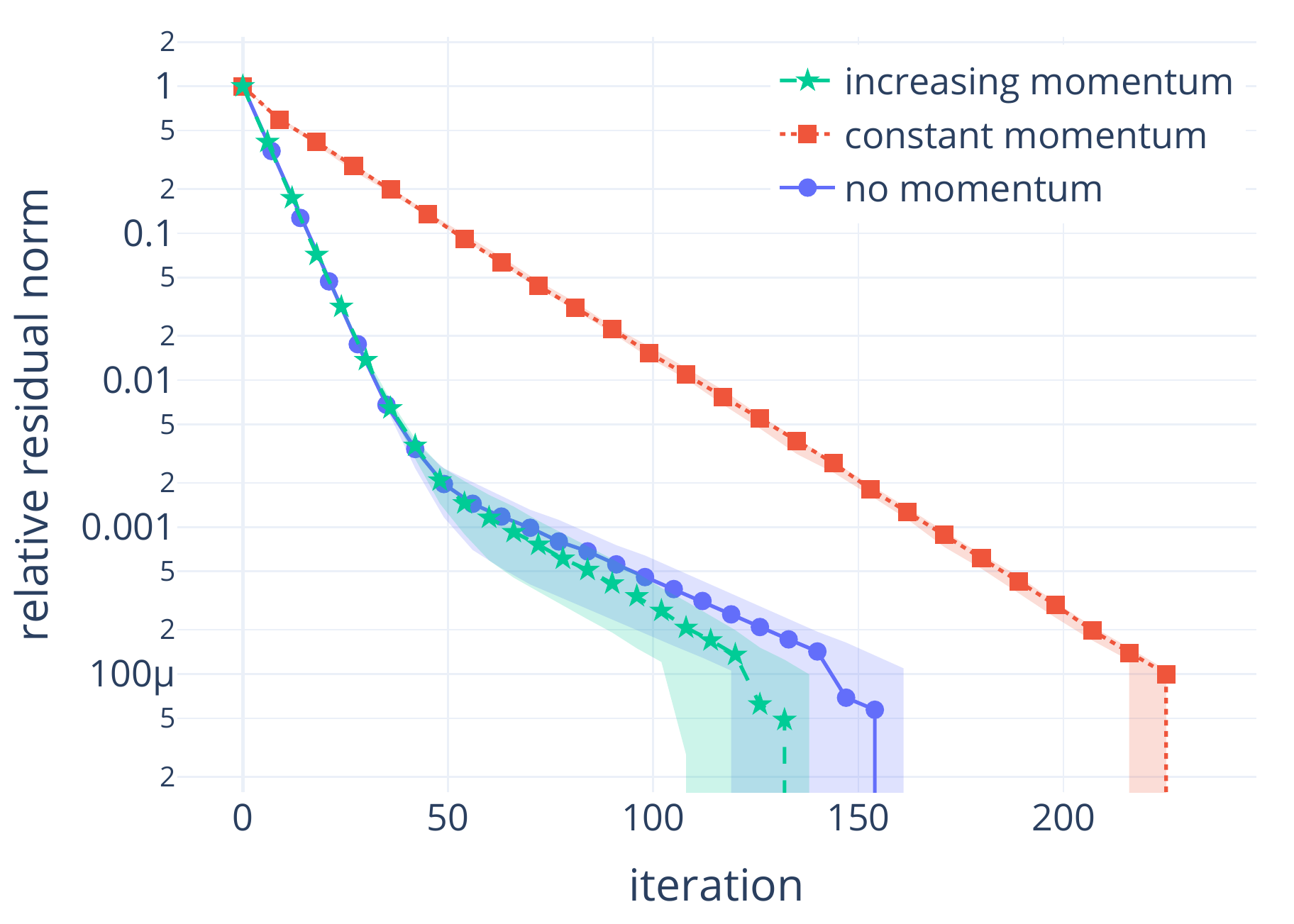}
		\caption{$\lambda = 70000$}
    \end{subfigure}
    \caption{Effect of regularizer $\lambda$ on increasing momentum (green), constant momentum (red) and no-momentum (blue) (\texttt{Boston} dataset with kernel, $m=506$, $\tau=\floor{m/4}=126$, \texttt{Count sketch}).}
    \label{fig:effect_reg_on_mom_boston}
\end{figure}

\paragraph{Sketch size.}
We tested different values of the sketch size, namely $\tau = 10\%, 50\%$ and $90\%$ of $m$, and reported the run time to reach a tolerance of $10^{-4}$ for each method.
In~\Cref{fig:effect_sketch_size_mom_cali_k}, we observe that constant momentum is very affected by the sketch size and is always the slowest method.
For intermediate sketch sizes, like $\tau = m/2$, our increasing momentum competes with no-momentum.
We also see that $\tau$ should not be set too small nor too large.
Indeed, larger sketch sizes lead to better estimates of the initial system~\eqref{eq:Axb} by~\eqref{eq:sketchedAxb}.
But if the sketch size is too large, solving the sketched system~\eqref{eq:sketched_system} becomes very slow.

\begin{figure}
    \centering
	\includegraphics[width=.45\textwidth]{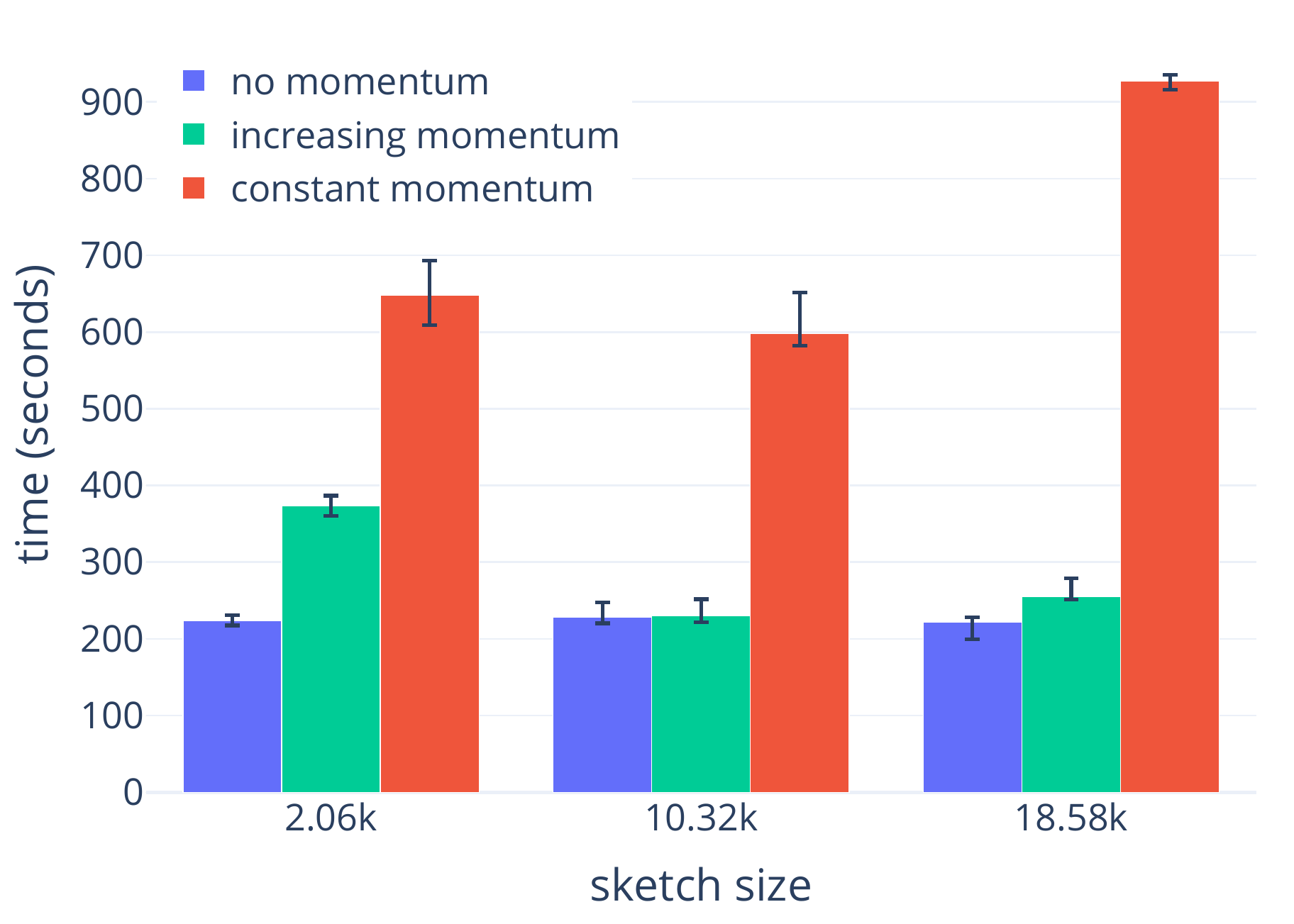}
	\caption{Effect of sketch size on increasing momentum (green), constant momentum (red) and no-momentum (blue) (\texttt{California Housing} with kernel, $m=20,640$, $\lambda = 10^{-6}$, \texttt{Subsample sketch}).}
	\label{fig:effect_sketch_size_mom_cali_k}
\end{figure}

\paragraph{Conclusions.}
We highlighted that momentum sketch-and-project is more efficient for small regularizers $\lambda$ as opposed to the vanilla method.
Also, we showed that the run time decreases then increases as a function of the sketch size $\tau$. Thus $\tau$ should be set to an intermediate value, \eg $\tau=m^{\frac{2}{3}}$, so that the cost of solving the sketched system~\eqref{eq:sketched_system} is manageable.
Finally, the main conclusion of this experiment is the overall robustness (across values of $\lambda$ and $\tau)$ and faster convergence of our heuristic increasing momentum setting.

\subsection{Experiment 2: Comparison of different types of sketches}
\label{sec:exp_9.3}

In this experiment we compare the performance of different sketching methods presented in~\Cref{sec:sketch_matrices} when using our heuristic increasing momentum setting~\eqref{eq:parametersheurististic}.
In~\Cref{fig:sketches_comparison_boston_kernel}, we monitor both the number of iterations and the time taken, since different sketching methods take different amounts of time per iteration.
We see in this figure that there is a clear ranking between sketch methods in terms of run time:
\begin{enumerate}
	\item \texttt{Subsample} is the most efficient on dense data (see also~\Cref{fig:exp_cali_kernel})
	\item \texttt{Count} and \texttt{SubCount} are the most efficient on sparse data (see~\Cref{fig:exp_rcv1}) and have very similar performance
	\item \texttt{Gaussian} is slow because of the cost of  dense matrix-matrix multiplications
	\item \texttt{Hadamard} is extremely slow because of the size of the padded matrix and of the preprocessing time it requires
\end{enumerate}

\paragraph{Conclusions.} For dense datasets, the \texttt{Subsample sketch} is the fastest because it only requires slicing operations, which are very well optimized (especially for NumPy arrays).
For sparse problems, the \texttt{Count sketch} is to be preferred since it \emph{densifies} just enough sketched matrices to extract information out of $\mA$.
We find that computing \texttt{Gaussian} and \texttt{Hadamard sketch} is very time demanding.
Furthermore, the cost associated to the padding step in \texttt{Hadamard sketch} is detrimental, especially for large $m$, which often makes it the slowest method.

\begin{figure}
    \centering
    \begin{subfigure}{0.4\textwidth}
		\includegraphics[width=\textwidth]{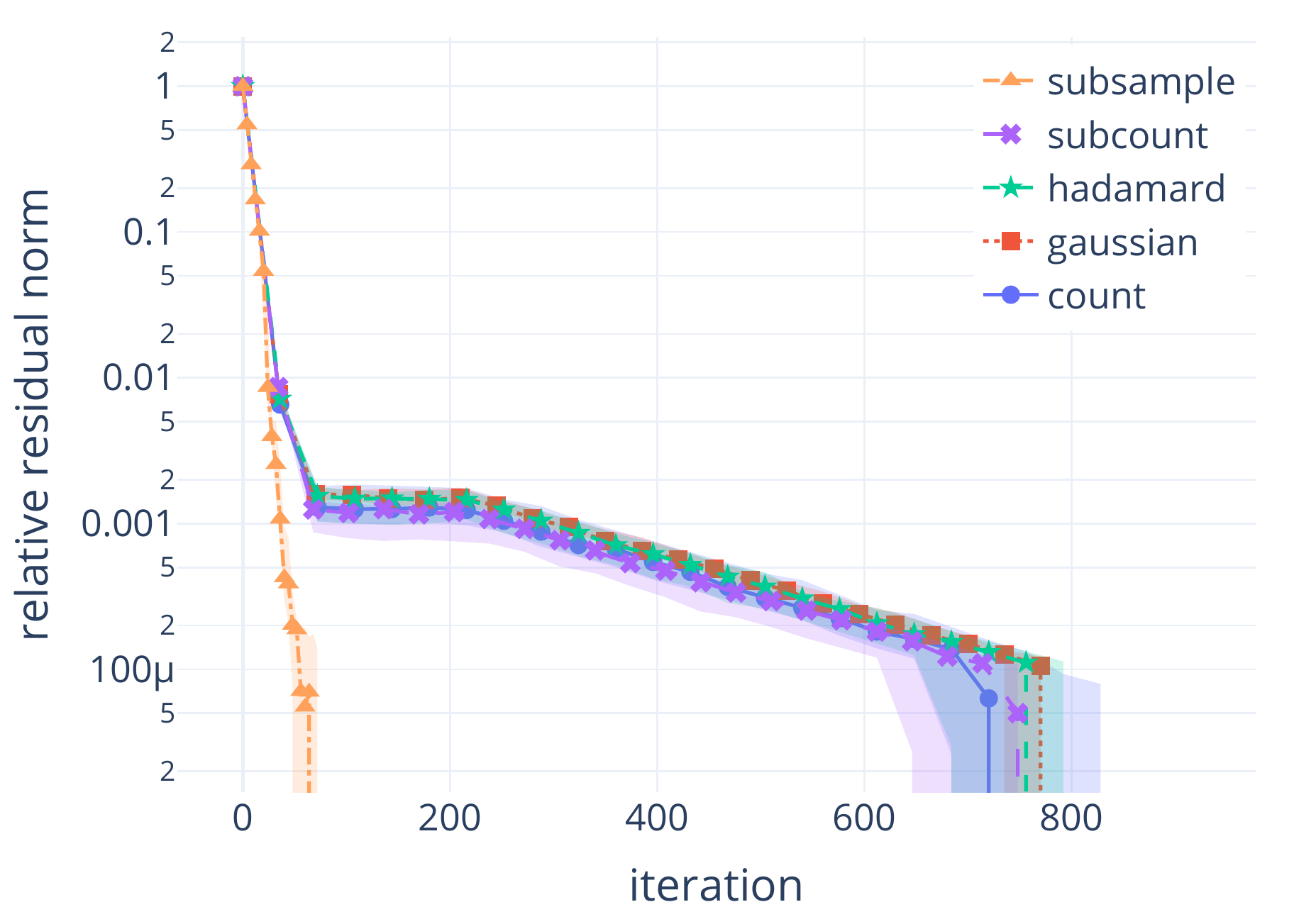}
		\caption{Iterations}
    \end{subfigure}
	\hspace{0.02\textwidth}
    \begin{subfigure}{0.4\textwidth}
		\includegraphics[width=\textwidth]{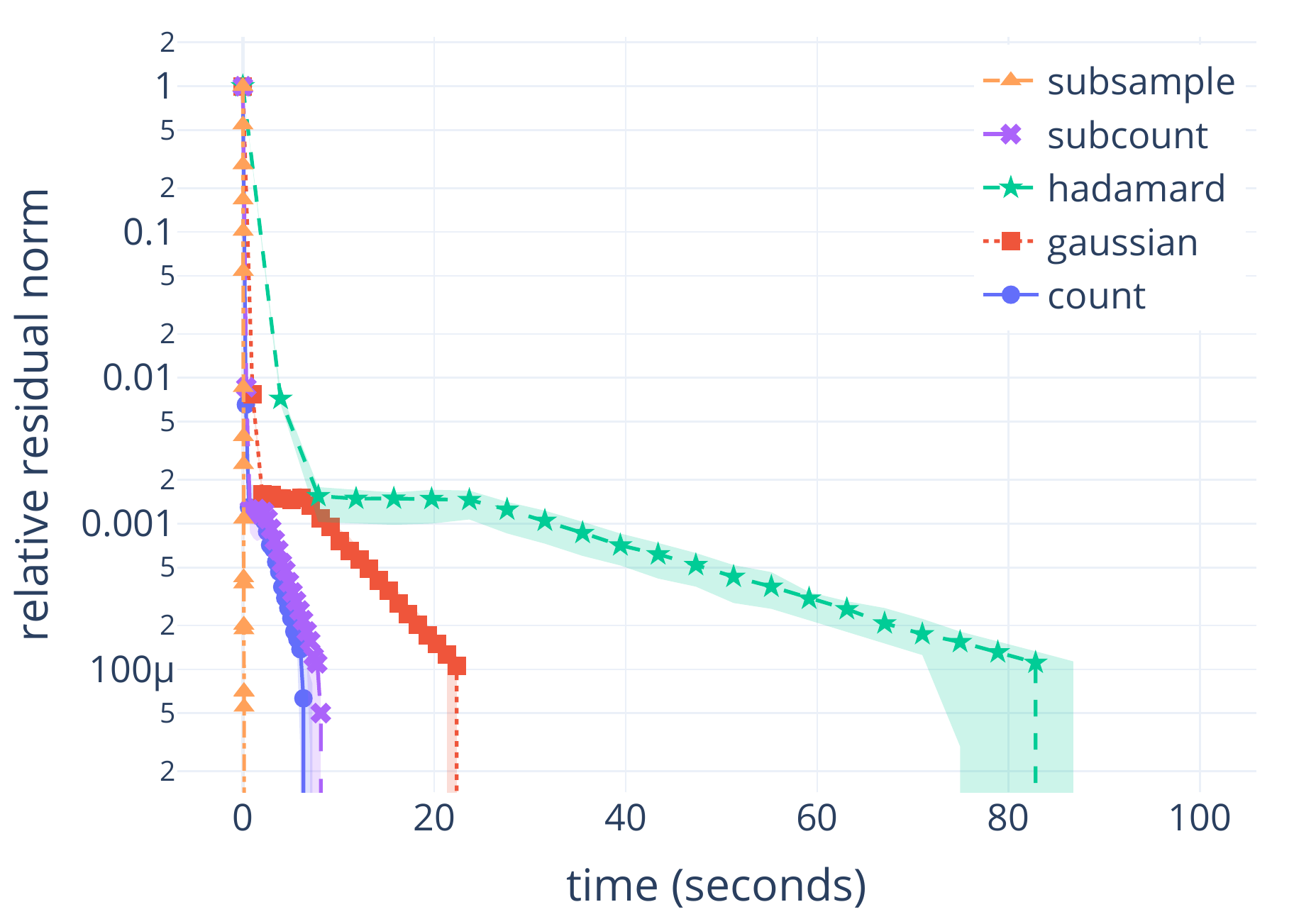}
        \caption{Time}
    \end{subfigure}
	\caption{Comparison of sketch methods for heuristic increasing momentum on kernel ridge regression problem applied to the \texttt{Boston} dataset ($\lambda=10^{-6}$, $m=506$ and $\tau= m/4 = 126$).}
    \label{fig:sketches_comparison_boston_kernel}
\end{figure}

\subsection{Experiment 3: Comparison against direct solver and conjugate gradients}

We now compare \texttt{RidgeSketch} with our heuristic increasing momentum setting~\eqref{eq:parametersheurististic}
with the two best sketches, \texttt{Subsample} and \texttt{Count sketch}, against a direct solver and \emph{Conjugate Gradients} (CG)~\cite{Hestenes1952}. The direct solver we used was LAPACK's \texttt{gesv} routine~\cite{LAPACK} for solving positive definite linear systems.
Here we tested our code on
\begin{itemize}
	\item A kernel ridge regression problem~\eqref{eq:kernelridge} on the  dataset \texttt{California Housing}  ($m = n = 20,640$).
	\item A large and sparse dataset: \texttt{RCV1}  $(m=d=47,236)$, with only $0.16\%$ of non-zeros.
\end{itemize}

\begin{figure}
    \centering
    \begin{subfigure}{0.4\textwidth}
		\includegraphics[width=\textwidth]{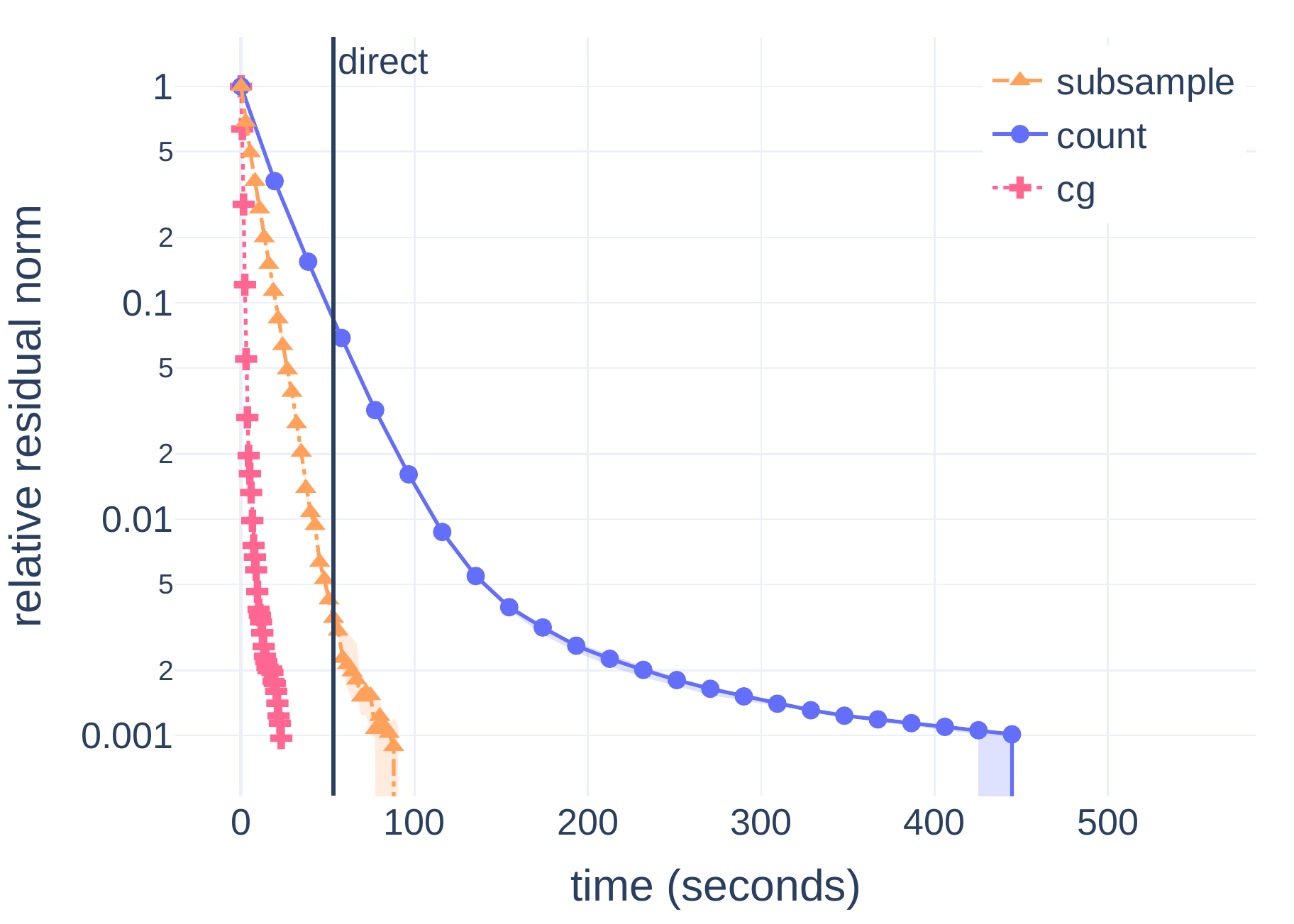}
		\caption{Kernel ridge regression applied to \texttt{California Housing} ($\tau=m/4 = 5,160$).}
		\label{fig:exp_cali_kernel}
    \end{subfigure}
	\hspace{0.02\textwidth}
    \begin{subfigure}{0.4\textwidth}
		\includegraphics[width=\textwidth]{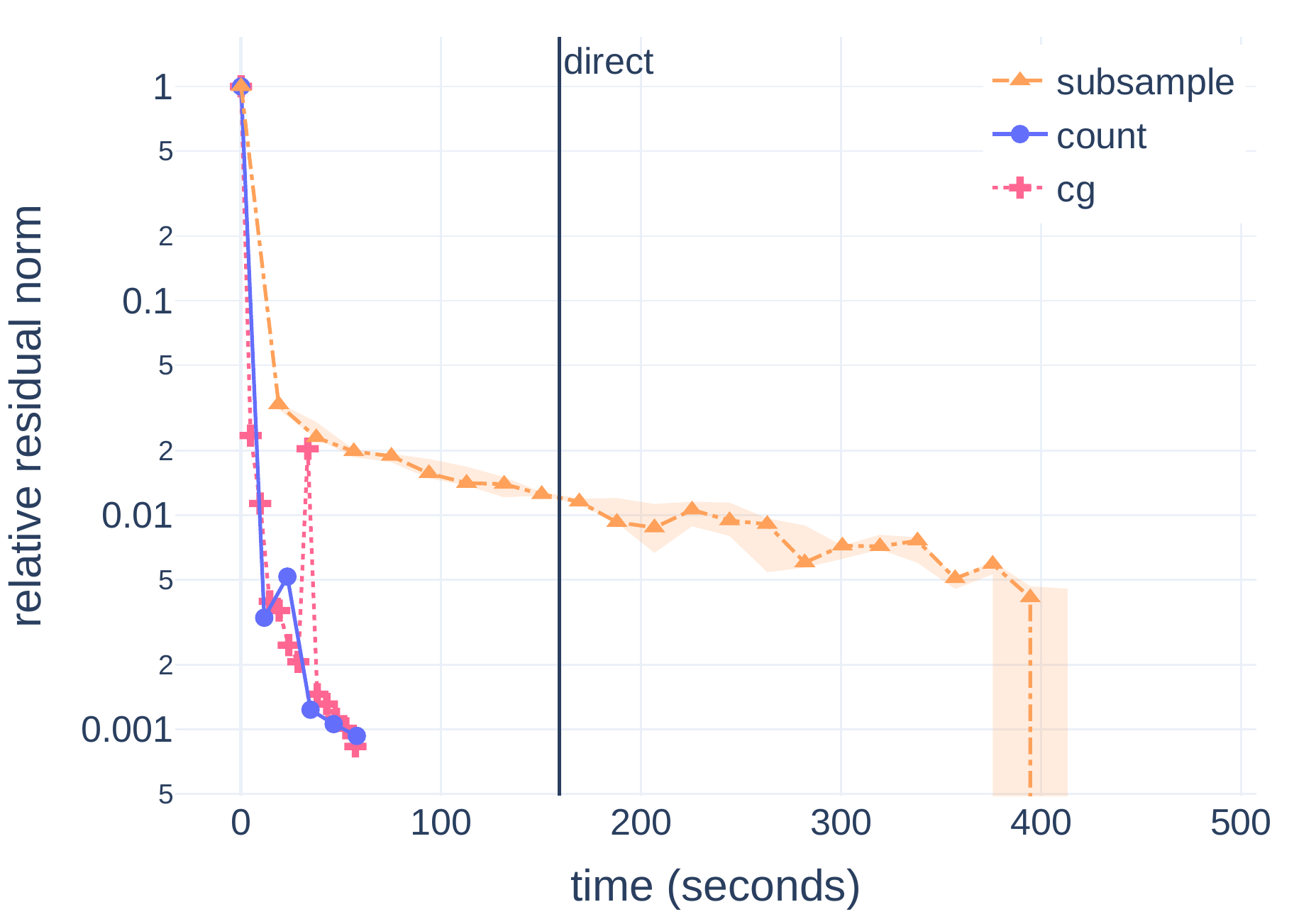}
        \caption{Ridge regression applied to \texttt{RCV1}\\($\tau=\ceil{m^{2/3}} = 1,307$).}
		\label{fig:exp_rcv1}
    \end{subfigure}
	\caption{Time comparison of best \texttt{RidgeSketch} methods (heuristic increasing momentum) against direct and CG solvers ($\lambda=10^{-9}$).}
    \label{fig:exp_4_comparison_cg_large_scale}
\end{figure}

\Cref{fig:exp_4_comparison_cg_large_scale} highlights a clear benefit of iterative methods like CG and sketch-and-project for solving large scale ridge problems. Moreover, this experiment on \texttt{RCV1} shows that in the large scale sparse setting, \texttt{Count sketch} is competitive  as compared to CG.

\section{Acceleration}
\label{sec:accel}

Recently, it was shown that the convergence rate of sketch-and-project can be improved by using \emph{acceleration}~\cite{TuVWGJR17,acellqN2018}.
See Algorithm~\ref{alg:accelSP} for our pseudo-code of the accelerated sketch-and-project method.
In~\cite{acellqN2018} it was shown that, by using specific parameter settings, the accelerated method enjoys a linear convergence with a rate that can be an order of magnitude better than rate given in Theorem~\ref{thm:iterates_conv}.

Despite this strong theoretical advantage of the accelerated method, it is not clear if this translates into a practical advantage because 1) the additional overhead costs of the method may outweigh the benefits of the improved iteration complexity and 2) the accelerated method relies on knowing beforehand spectral properties of the matrix $\mA$ that are expensive to compute.
Here we show that 1) can be remedied by a careful implementation and 2) is indeed a fundamental issue that prevented us from developing a practical method.

\subsection{From theory to practical implementation of acceleration}
\label{sec:accel_implementation}

\paragraph{Additional overhead: Pseudo-code and efficient implementation.}
The accelerated version in  Algorithm~\ref{alg:accelSP} has  three $(w^k,z^k,v^k)$ sequences of iterates.
The bottleneck costs of Algorithm~\ref{alg:accelSP} are the same as the standard sketch-and-project method in Algorithm~\ref{alg:SP}, which are the sketching operations on line~\ref{ln:stochgrad}.
Indeed, the only additional computations in Algorithm~\ref{alg:accelSP} as compared to Algorithm~\ref{alg:SP} are lines~\ref{ln:zupdate} and~\ref{ln:vupdate} which cost $O(m)$.
The other additional overhead is how to monitor the residual so as to know when to stop the algorithm.
We found that for the residual to be efficiently maintained and updated, we had to monitor three residual vectors $r_z^k \eqdef \mA z^k - b,$ $r_w^k \eqdef  \mA w^k - b $ and $r_v \eqdef  \mA v^k - b$.
These residual vectors can be updated efficiently since
\[r^{k+1}_w = \mA w^{k+1} - b = \mA(z^k - g^k) - b = r_z^k - \mA g^k = r_z^k - \mA \mS_k \delta^k.\]
Since we have already pre-computed $\mA \mS_k$ and $\delta^k$, the additional cost is $O(m \tau)$.
Furthermore from lines~\ref{ln:zupdate} and~\ref{ln:vupdate} we have that
\begin{align*}
	r^{k+1}_z &= \alpha \mA v^k + (1-\alpha) \mA w^k -b = \alpha r_v^k + (1-\alpha) r_w^k \enspace,
\end{align*}

\begin{algorithm}[ht]
	\begin{algorithmic}[1]

		\State \textbf{Parameters:} distribution over random $m \times \tau$ matrices in $\cD$, tolerance $\epsilon > 0$,
		\Statex acceleration parameters $\mu \in [0,\,1]$, $\nu \in \left[1, \, \tfrac{1}{\mu}\right]$

		\State Set $w^0 = v^0 = 0 \in \R^m$ \Comment weights initialization
		\State Set $r_z^0=r_v^0=r_w^0 = \mA w^0 - b \in \R^m$ \Comment residual initialization

		\State Set $\beta = 1 - \sqrt{\frac{\mu}{\nu}}$
		\State Set $\gamma = \sqrt{\frac{1}{\mu \nu}}$
		\State Set $\alpha = \frac{1}{1+\sqrt{\frac{\nu}{\mu }}}$
		\State $k = 0$
		\While {$\norm{r^k_v}_2 / \norm{r^0_v}_2 \leq \epsilon$}
			\State Sample  $\mS_k\sim \cD$ i.i.d
			\State Compute and store $\mA \mS_k$
			\State $\delta^k =$ \texttt{least\_norm\_solution} $\left( \mS_k^\top \mA \mS_k,\mS_k^\top  r_z^k \right)$ \Comment solve sketched system
			\State $g^k = \mS_k \delta^k$ \label{ln:stochgrad}
			\State $\mA g^k = (\mA \mS_k) \delta^k$
			\State $z^k = \alpha v^k + (1-\alpha) w^k$ \label{ln:zupdate} \Comment update the iterates
			\State $w^{k+1} = z^k -g^k$ \label{ln:xupdate}
			\State $v^{k+1} = \beta v^k +(1-\beta)z^k - \gamma g^k$ \label{ln:vupdate}
			\State $r^{k}_z = \alpha r_v^k + (1-\alpha) r_w^k $  \Comment update the residuals
			\State $r^{k+1}_w =  r_z^k - \mA g^k$
			\State $r^{k+1}_v = \beta r_v^k+ (1-\beta)r_z^k - \gamma \mA g^k$
			\State $k = k + 1$
		\EndWhile
		\State {\bf Output:} $w^{t}$ \Comment return weights vector
	\end{algorithmic}
	\caption{The Sketch-and-Project method with Acceleration}
	\label{alg:accelSP}
\end{algorithm}

and
\begin{align*}
	r_v^{k+1} &= \beta \mA v^k +(1-\beta)\mA z^k -b - \gamma \mA g^k = \beta r_v^k+ (1-\beta)r_z^k - \gamma \mA \mS_k \delta^k \enspace .
\end{align*}
Thus the residuals $r_v^k$ and $r_v^k$ can be updated at an additional $O(m)$ cost to perform the above vector additions and scalar multiplications.

\paragraph{Setting the acceleration parameters with spectral properties.}
The main issue with the accelerated version is that it introduces two new hyperparameters $\mu$ and $\nu$ which have to be estimated. In theory~\cite{acellqN2018}, by setting  these two parameters  according to
\begin{equation}
	\label{eq:mu+nu}
	\mu \eqdef \inf_{x \in \Range{\mA^\top}} \tfrac{\dotprod{\E{\mZ}x,x}}{\dotprod{x,x}}
	\qquad \mbox{and} \qquad
	\nu \eqdef \sup_{x \in \Range{\mA^\top}} \tfrac{\dotprod{\E{\mZ\E{\mZ}^\dagger \mZ}x,x}}{\dotprod{\E{\mZ}x,x}} \enspace.
\end{equation}
where
\begin{equation}
	\label{eq:Z}
	\mZ \eqdef \mA^\top\mS^\top(\mS^\top \mA\mS)^{\dagger}\mS^\top \mA \enspace,
\end{equation}
we can guarantee an accelerated rate of convergence. The issue is that the theory in~\cite{acellqN2018} requires that these parameters be set exactly using~\eqref{eq:mu+nu} and
computing~\eqref{eq:mu+nu} is more costly then solving the original linear system! So this leads us to the following practical question.

\begin{tcolorbox}
Is there a {\bf rule of thumb setting} for the acceleration parameters $\mu$ and $\nu$ such that the accelerated sketch-and-project method is consistently better than the standard sketch-and-project method?
\end{tcolorbox}

In~\cite{TuVWGJR17} the authors propose some settings for $\nu$ and $\mu$ when the  sketch size is large.  But there is currently no practical rule for setting these parameters in general.
In theory, we know that
\[ 0 \;\leq \; \mu \; \leq \;  \frac{1}{\nu} \; \leq  \; 1 \enspace,\]
as proven in Lemma 2 in~\cite{acellqN2018}.
Furthermore the extreme case where $\mu = \nu =1$ corresponds  to the standard sketch-and-project method, as can be seen by induction on Algorithm~\ref{alg:accelSP}  since $z^k = v^k = w^k $ for all iterations, and $w^k$'s are thus equivalent to the $w^k$'s in Algorithm~\ref{alg:SP}. We now look at some other extreme cases to better understand these parameters.

\paragraph{Single row sampling.}
For this special case of subsample sketchs with $\tau =1$, that is $\mS = e_i$, where we recall that $(e_i)_{1 \leq i \leq m}$ are the canonical basis vectors of $\R^m$, with probability $\frac{1}{m}$ we know that
\begin{equation} \label{eq:munu1}
\mu = \frac{\lambda_{\min}(\mA)}{\trace{\mA}} \qquad \mbox{and} \qquad \nu = \frac{\trace{\mA}}{\min_{i=1,\ldots, m} \mA_{ii}} \enspace.
\end{equation}
Consequently, if the eigenvalues of $\mA$ are concentrated with $\lambda_{\min}(\mA)$ close to $\lambda_{\max}(\mA)$ then we have that $\mu \approx \frac{1}{m}$ and $\nu \approx m.$  Alternatively, if the eigenvalues of $\mA$  are far apart, then it may be that $\mu \approx 0$ and $\nu \approx \infty.$

\paragraph{No sketching.} When $\mS = \mI$ then $\mZ = \mA$ since $\mA$ is invertible. Consequently
\begin{equation}
	\label{eq:munum}
	\mu = \inf_{x \in \Range{\mA^\top}} \tfrac{\dotprod{\mA x,x}}{\dotprod{x,x}} = \lambda_{\min}(\mA), \qquad \qquad
	\nu = \sup_{x \in \Range{\mA^\top}} \tfrac{\dotprod{\E{\mA}x,x}}{\dotprod{\E{\mA}x,x}} =1 \enspace.
\end{equation}

In either of these two extremes, we need the smallest eigenvalue of $\mA$ to set $\mu$ and $\nu,$ which is a prohibitive cost. In~\Cref{sec:accelexp} we show that finding a setting for $\mu$ and $\nu$ that outperforms the standard sketch-and-project method is difficult, and akin to finding a needle in a haystack.

\subsection{Experiments setting the acceleration parameters}
\label{sec:accelexp}

Here we would like to verify if there exists a default setting for the acceleration parameters $\mu$ and $\nu$ that results in consistently faster execution than the non-accelerated version. In Figure~\ref{fig:grid_plot}, we show the results of an extensive grid search for trying to identify a suitable $\mu$ and $\nu$ setting.
This figure shows the time taken to reach a $\epsilon = 10^{-4}$ solution of the relative residual for different pairs of $\mu$ and $\nu$ such that
\begin{equation*}
	0 \; \leq \; \mu \; \leq \; \frac{1}{\nu} \; \leq  \; 1 \enspace.
\end{equation*}
The problem we considered here is kernel ridge regression, with a RBF kernel with $\sigma = 0.5$.
The data $\mX$ is a random $n \times d$ sparse CSC matrix with density $0.25$, and the regularizer is set to $\lambda = 1/n$.
From Figure~\ref{fig:grid_plot}, there is no clear pair of parameters $(\mu, \nu)$ leading to an improvement in convergence.
Even if a finer grid search might allow to find optimal parameters, the gain in convergence is so marginal that it makes acceleration impractical compared to the version without acceleration, see~\Cref{fig:accel_example}.

We conclude that there is yet no known way to set these acceleration parameters in practice: the theory might be too loose to set them and looking empirically on a grid search for optimal points is too cumbersome.

\begin{figure}
    \centering
    \begin{subfigure}{0.4\textwidth}
		\includegraphics[width=\textwidth]{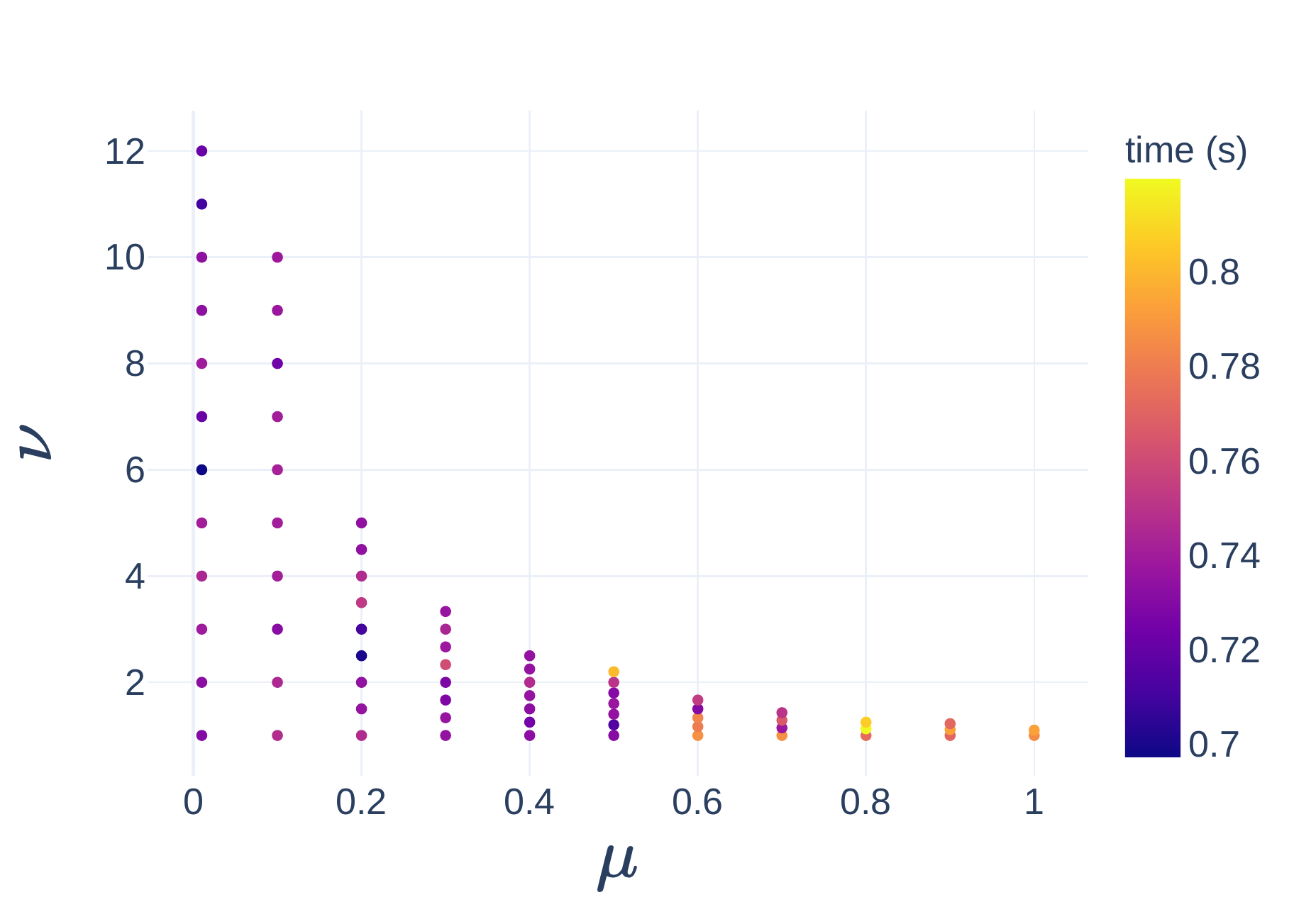}
		\caption{Average time taken over $100$ runs, for different acceleration parameters, to solve the problem with tolerance $10^{-4}$.}
		\label{fig:grid_plot}
    \end{subfigure}
	\hspace{0.02\textwidth}
    \begin{subfigure}{0.4\textwidth}
		\includegraphics[width=\textwidth]{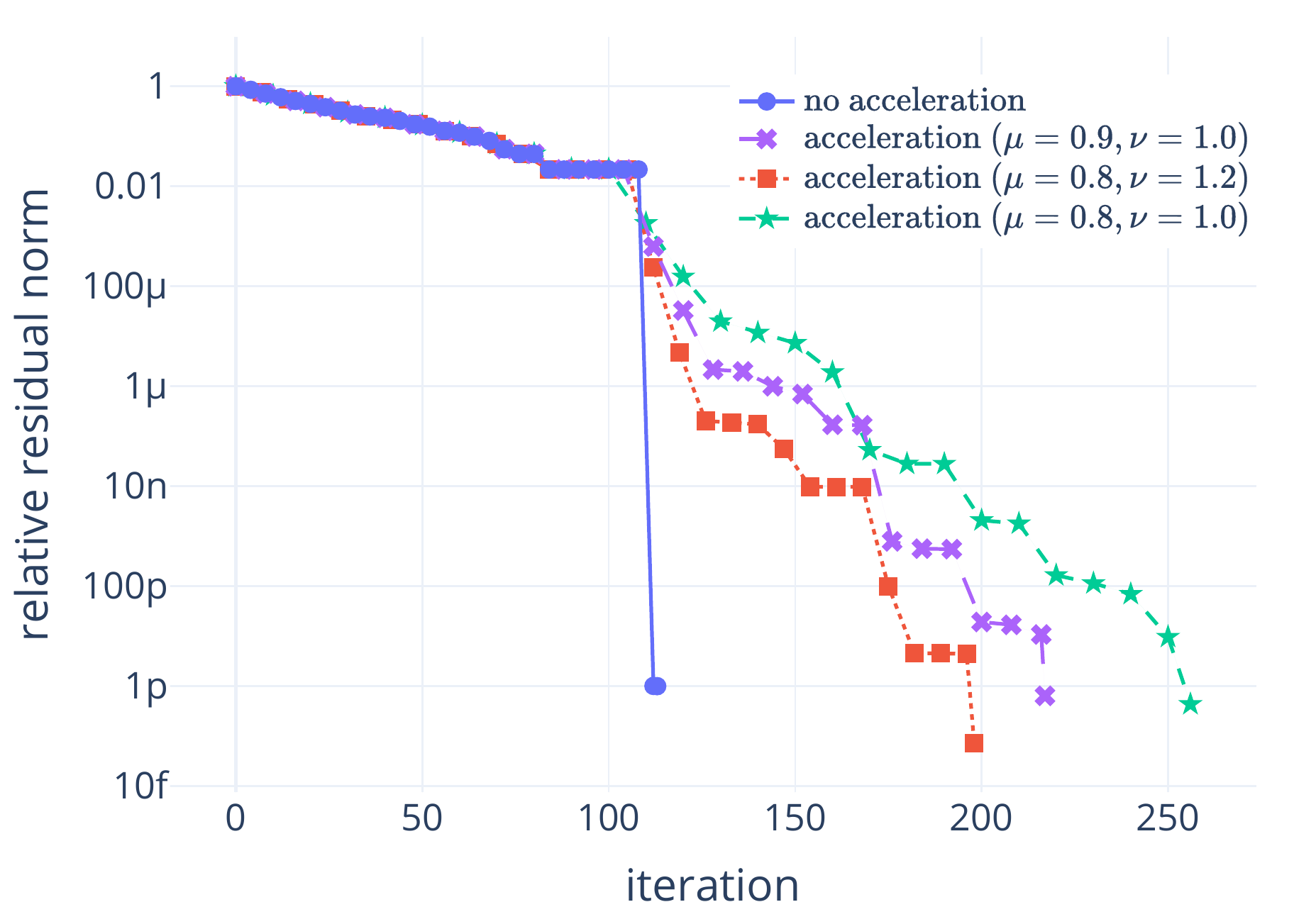}
        \caption{Comparison of no-acceleration and different acceleration parameter settings.}
		\label{fig:accel_example}
    \end{subfigure}
	\caption{Grid time plot (left) and single run (right) to solve a RBF kernel regression ($\sigma = 0.5$, $ \lambda = 1/n$) for random sparse CSC matrix $\mX \in \R^{n \times d}$ ($n=m=2000$, $d=500$) with density $0.25$. We used a \texttt{Subsample sketch} method and the	 sketch size is set to $\tau=\floor{m^{2/3}} = 158$.}
    \label{fig:acelexp}
\end{figure}

\renewcommand*{\bibfont}{}
{ 
\printbibliography
}

\newpage  

\appendix

\section{Auxiliary lemmas}

\begin{lemma}
	\label{lem:dual_equiv}
	The linear systems~\eqref{eq:linear} and~\eqref{eq:duallinear} have the same solution.
\end{lemma}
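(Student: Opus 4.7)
The plan is to show directly that both linear systems have a unique solution and that these solutions coincide. Since $\lambda > 0$, both Gram-type matrices $\mX^\top \mX + \lambda \mI \in \R^{d \times d}$ and $\mX \mX^\top + \lambda \mI \in \R^{n \times n}$ are symmetric positive definite, hence invertible. Therefore~\eqref{eq:linear} has the unique solution $w_P = (\mX^\top \mX + \lambda \mI)^{-1} \mX^\top y$, and~\eqref{eq:duallinear} defines the unique vector $w_D = \mX^\top (\mX \mX^\top + \lambda \mI)^{-1} y$. The goal reduces to proving $w_P = w_D$.

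The key step is the ``push-through'' identity
\begin{equation*}
    (\mX^\top \mX + \lambda \mI)^{-1} \mX^\top \; = \; \mX^\top (\mX \mX^\top + \lambda \mI)^{-1} \enspace.
\end{equation*}
To establish it, I would start from the trivial algebraic equality
\begin{equation*}
    \mX^\top (\mX \mX^\top + \lambda \mI) \; = \; \mX^\top \mX \mX^\top + \lambda \mX^\top \; = \; (\mX^\top \mX + \lambda \mI) \mX^\top \enspace,
\end{equation*}
and then multiply on the left by $(\mX^\top \mX + \lambda \mI)^{-1}$ and on the right by $(\mX \mX^\top + \lambda \mI)^{-1}$. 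Applying both sides to the vector $y$ gives $w_P = w_D$.

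There is no real obstacle here; the only subtlety worth mentioning is the invertibility of the two matrices, which is exactly where the assumption $\lambda > 0$ is used. I would state this explicitly to distinguish the ridge case from ordinary (unregularized) least squares, where the primal and dual normal equations do not necessarily admit unique solutions and the equivalence must instead be phrased in terms of Moore--Penrose pseudoinverses.
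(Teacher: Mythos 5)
Your proof is correct and follows essentially the same route as the paper: both start from the identity $(\mX^\top\mX+\lambda\mI)\mX^\top=\mX^\top(\mX\mX^\top+\lambda\mI)$, multiply by the two inverses, and apply the result to $y$. Your version is marginally more careful in that it explicitly justifies invertibility via positive definiteness from $\lambda>0$, which the paper takes for granted.
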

\begin{proof}
	Note that
	\begin{equation*}
		\left(\mX^\top \mX + \lambda \mI\right) \mX^\top = \mX^\top \left(\mX \mX^\top + \lambda \mI\right) \enspace.
	\end{equation*}
	Left multiplying the above by the inverse of $\left(\mX^\top \mX + \lambda \mI \right)$ and right multiplying by the inverse of $(\mX \mX^\top + \lambda \mI)$ gives
	\begin{equation*}
		\mX^\top \left(\mX \mX^\top +\lambda \mI\right)^{-1}= \left(\mX^\top \mX +\lambda \mI\right)^{-1} \mX^\top \enspace.
	\end{equation*}
	Finally right multiplying by $y$, on the left we have the solution to~\eqref{eq:duallinear} and on the right the solution to~\eqref{eq:linear}.
\end{proof}

\section{\texttt{RidgeSketch} package}
\label{sec:code}

Our \texttt{RidgeSketch} Python package is designed to be easily augmented by new contributions.
Users are encouraged to add new sketches, new parameter settings (\eg for momentum) and new datasets (see \path{datasets/data_loaders.py}).
They can also easily compare methods using the command:
\begin{lstlisting}[language=bash]
	$ python benchmarks.py [options] [name of config]
\end{lstlisting}
\texttt{RidgeSketch} comes with two tutorial Jupyter Notebooks: one for fitting a \texttt{RidgeSketch} model, and another for adding new sketches and benchmarks.
Next we present some snippets of code.

\subsection{Solving the ridge regression problem}

In Code~\ref{listing:code_intro}, we provide an example creating a ridge regression model and solving the fitting problem with our \texttt{Subsample sketch} solver.
\begin{listing}
\begin{minted}{python}
from datasets.data_loaders import BostonDataset
from ridge_sketch import RidgeSketch

dataset = BostonDataset()
X, y = dataset.load_X_y()  # data loader

model = RidgeSketch(
    alpha=1.0,  # regularizer
    algo_mode="mom",  # momentum version
    solver="subsample",  # sketch, cg, direct or sklearn solver
    sketch_size=10)
model.fit(X, y)  # solve the fitting problem
\end{minted}
\caption{Example of how to build a \texttt{RidgeSketch} model and fit it to a dataset.}
\label{listing:code_intro}
\end{listing}

\subsection{Adding sketching methods}

We built our package so that new sketching methods can easily be added by contributors. To do this, one should open the \texttt{sketching.py} file and create a new sketching subclass that inherits from the \texttt{Sketch} class.
In Code~\ref{listing:code_subsample} we provide the example of the \texttt{Subsample} sketching method.
\begin{listing}
\begin{minted}{python}
import numpy as np

class SubsampleSketch(Sketch):
    def __init__(self, A, b, sketch_size):
	"""Initializes the Sketch object and defines its attributes"""
        super().__init__(A, b, sketch_size)
        self.sample_indices = None

    def set_sketch(self):
        """Generates subsampling indices representing the sketching matrix"""
        self.sample_indices = np.random.choice(
            self.m, size=self.sketch_size, replace=False).tolist()

    def sketch(self, r):
	"""Generates sketched system"""
        self.set_sketch()
        SA = self.A.get_rows(self.sample_indices)
        SAS = SA[:, self.sample_indices]
        rs = r[self.sample_indices]
        return SA, SAS, rs

    def update_iterate(self, w, lmbda, step_size=1.0):
	"""Updates the weights by solving the sketched system"""
        w[self.sample_indices] -= step_size * lmbda
        return w
\end{minted}
\caption{Implementation of the \texttt{Subsample sketch} subclass.}
\label{listing:code_subsample}
\end{listing}

\end{document}